%% file: main.tex
\documentclass[12pt]{article}

\input{header}

\title{On Fourier Phase Retrieval from Differential Intensity Measurements with Applications to Wavefront Sensing}
\author{
Simon Hubmer\footnote{Johannes Kepler University Linz, Institute of Industrial Mathematics, Altenbergerstra{\ss}e~69, 4040 Linz, Austria, (simon.hubmer@jku.at), \textbf{Corresponding author}.} ,
Lukas Weissinger\footnote{Johann Radon Institute Linz, Altenbergerstra{\ss}e~69, 4040 Linz, Austria, (lukas.weissinger@ricam.oeaw.ac.at)} ,
\\
Ronny Ramlau\footnote{Johannes Kepler University Linz, Institute of Industrial Mathematics, Altenbergerstra{\ss}e~69, 4040 Linz, Austria, (ronny.ramlau@jku.at)}\,\,\textsuperscript{,}\footnote{Johann Radon Institute Linz, Altenbergerstra{\ss}e~69, 4040 Linz, Austria, (ronny.ramlau@ricam.oeaw.ac.at)} ,
Otmar Scherzer\footnote{University of Vienna, Faculty of Mathematics, Oskar Morgenstern-Platz 1, 1090 Vienna, Austria (otmar.scherzer@univie.ac.at)}\,\,\textsuperscript{,}\footnote{Johann Radon Institute Linz, Altenbergerstra{\ss}e~69, 4040 Linz, Austria, (otmar.scherzer@assoc.oeaw.ac.at)}
}

\begin{document}

% Include the title
\maketitle

% Abstract
\begin{abstract}
In this paper, we consider Fourier phase retrieval from differential intensity measurements, i.e., the problem of determining the phase of a complex-valued function from a series of intensity measurements differing only by slight modulations in Fourier domain. These modulations may be induced by optical elements such as prisms or phase plates, or via spatial-light modulators. Generalizing the principles behind the transport of intensity equation, we show that given such differential intensity measurements, the phase can be determined as the solution of certain partial differential or integro-differential equations. This is then used to design efficient reconstruction algorithms for a number of Fourier-type wavefront sensors, such as the pyramid wavefront sensor, commonly used in adaptive optics. Numerical experiments illustrate the usefulness of our proposed approach.

\smallskip
\noindent \textbf{Keywords.} Fourier Phase Retrieval, Differential Intensity Measurements, Transport of Intensity Equation, Fourier-type Wavefront Sensors, Adaptive Optics
% 65J22 - Numerical Analysis - Inverse Problems
% 65J20 - Numerical Analysis - Improperly posed problems; regularization
% 47A52 Operator Theory - Ill-posed problems, regularization
% 78A10 Optics, electromagnetic theory - Physical Optics
\end{abstract}

% % % % % % % % % % % % %
% Start of the sections %
% % % % % % % % % % % % %

% % % % % % % % % % % % % %
% Section - Introduction  %
% % % % % % % % % % % % % %
\section{Introduction}\label{sect_introduction}
    
In this paper, we consider a variant of the classic Fourier phase retrieval problem \cite{Hurt_1989}
    \begin{equation}\label{prob_classic}
        \text{Given} \quad I(\xv) := \abs{\FI (u)(\xv)}^2\,, \quad \text{find} \quad u(\xiv) \qquad (\text{where} \, u : \R^N \to \C ) \,.
    \end{equation}
This problem has been studied extensively in the past, both from a mathematical and an applied perspective \cite{Hurt_1989}. Many of these investigations are motivated by applications in imaging, such as crystallography, microscopy, holography, diffraction imaging, and wavefront sensing in optical systems. This is because there, the complex-valued function
    \begin{equation}\label{polar_form}
        \FI(u )(\xv)= A( \xv ) e^{i \vphi( \xv)} \,,
        \qquad
        \text{with amplitude} 
        \qquad
        A(\xv) = \sqrt{I(\xv)} 
    \end{equation}
corresponds to some electric field, whose phase $\vphi(\xv)$ encodes important physical information of interest. However, physical detectors can typically only measure the intensity $I(\xv) = A(\xv)^2$, while the phase information $\vphi(\xv)$ is lost. Hence, the Fourier phase retrieval problem \eqref{prob_classic} can be equivalently rewritten as the inverse problem 
    \begin{equation}\label{prob_classic_simple}
        \text{Given} \quad A(\xv) \,, \quad \text{find} \quad \vphi(\xiv) \,.
    \end{equation}
Clearly, this is a severely ill-posed problem, for which one can only hope to obtain a unique reconstruction under suitable additional assumptions on the function $u(\xiv)$, such as smoothness and compact support, or knowledge of the location of the zeros of its Fourier-Laplace transform. For an overview of the substantial literature on this topic, including numerical algorithms for the practical solution of \eqref{prob_classic} or \eqref{prob_classic_simple}, see e.g.~\cite{Hurt_1989}.

Due to the severe difficulties inherent in the classic Fourier phase retrieval problem, a popular approach for its solution is to partly circumvent it. More precisely, instead of the single intensity measurement $I(\xv)$ in \eqref{prob_classic}, one often uses several intensity measurements $I_1(\xv),I_2(\xv),\text{etc.}$ which are suitably related. For example, one may consider
    \begin{equation*}
        I_1(\xv) := \abs{\FI (u)(\xv)}^2\,, 
        \qquad
        I_2(\xv) := \abs{\FI (v)(\xv)}^2\,, 
        \qquad
        I_3(\xv) := \abs{\FI (u+v)(\xv)}^2\,, 
    \end{equation*}
where $v$ is some (possibly partly-known) reference function. Then one can show that
    \begin{equation}\label{OCT}
    \begin{split}
        I_3(\xv) &=
        I_1(\xv) + I_2(\xv) 
        + 2 \Re\kl{ \overline{\FI (u)(\xv)}\FI (v)(\xv)  }
        \\
        &= I_1(\xv) + I_2(\xv)  + 2 \sqrt{I_1(\xv)}  \sqrt{I_2(\xv)} \cos(\vphi_v(\xv) - \vphi_u(\xv)) \,,
    \end{split}
    \end{equation} 
where $\vphi_u(\xv)$, $\vphi_v(\xv)$ denote the phases of $\FI(u )(\xv)$, $\FI(v)(\xv)$ as in \eqref{polar_form}, respectively. Hence, the relative phase difference $(\vphi_v(\xv) - \vphi_u(\xv))$ may be recovered from the intensities $I_1(\xv),I_2(\xv), I_3(\xv)$, which forms the basis of interferometry and holography.

Another popular approach is windowed or masked Fourier phase retrieval, where
    \begin{equation*}
        I_k(\xv) := \abs{\FI ( m_k u)(\xv)}^2 \,,
    \end{equation*}
and $\Kl{m_k}_{k\in K}$ is a finite or infinite sequence of mask functions. In this setting, unique phase retrieval up to a constant phase factor is possible with as little as three different masks. However, these masks are often real-valued, making them difficult or sometimes impossible to implement physically within an optical system. Furthermore, simultaneously recording several intensity images can be limiting in practice, since this requires beam-splitting, leading to reduced exposure and thus limiting imaging quality.

Hence, in this paper, we consider intensity measurements of the form
    \begin{equation}\label{def_Ixt}
        I(\xv,t) := \abs{\FI (e^{i\psi(t)} u)(\xv)}^2 \,,
    \end{equation}
where $\psi(t) = \psi(\xiv,t)$ is a parameter or time-dependent function, and the problem 
    \begin{equation*}
        \text{Given} \quad I(\xv,t) \,, \quad \text{find} \quad u(\xiv) \,.
    \end{equation*}
Working with the measurements $I(\xv,t)$ has two motivations: biplane or vortex-plate imaging in microscopy, and wavefront sensing via Fourier-type wavefront sensors (WFSs). In particular, the modulation $e^{i \psi(t)}$ may be realized physically in several different ways, e.g.\ using lenses, phase plates, optical prisms, or spatial light modulators (SLMs). Especially SLMs provide for the greatest flexibility here, since they allow almost any real-valued function $\psi$ to be realized in practice. Additionally, they may be smoothly varied over time, allowing us to consider so-called differential intensity measurements
    \begin{equation*}
        \ddt I(\xv,t) = \ddt \abs{\FI (e^{i\psi(t)} u)(\xv)}^2 \,.
    \end{equation*}
This leads to the problem of Fourier phase retrieval from differential measurements,~i.e.,
    \begin{equation}\label{prob_differential}
        \text{Given} \quad I(\xv,t) \,\,\, \text{and} \,\,\, \ddt  I(\xv,t) \,, \quad \text{find} \quad u(\xiv) \,.
    \end{equation}
Perhaps the most well-known instance of this problem occurs for $\psi(t) = C\, t \abs{\xiv}^2$, which can physically be realized, e.g., by defocusing the imaging system, and is thus connected to the concept of biplane (or multi-plane) imaging. In this case, there holds
    \begin{equation}\label{TIE_basic}
        c \, \ddt I(\xv,t) = \nabla_{\xv} \cdot \kl{I(\xv,t) \nabla_{\xv} \vphi(\xv,t)} \,,
    \end{equation}
where the constant $c$ depends on the scaling convention of the Fourier transform, cf.~Section~\ref{subsect_defoucs}, and the phase $\vphi(\xv,t)$ is defined, analogously to \eqref{polar_form}, via the decomposition
    \begin{equation}\label{polar_form_t}
        \FI(e^{i\psi(t)} u )(\xv)= A( \xv,t ) e^{i \vphi( \xv,t)} \,,
        \qquad
        \text{with amplitude} 
        \qquad
        A(\xv,t) = \sqrt{I(\xv,t)} \,.
    \end{equation}
The PDE \eqref{TIE_basic}, evaluated at $t=0$, is a variant of the transport of intensity (TIE) equation, which has been used in phase retrieval for a long time \cite{Gonsalves_1987,Zuo_2020_TIE_Tutorial}. Note that in practice, the time derivative is usually approximated via the difference quotient
    \begin{equation*}
        \ddt I(\xv,t) \Big\vert_{t=0} \approx \frac{I(\xv,\tau) - I(\xv,0)}{\tau} \,,
    \end{equation*}
for some $\tau > 0$ sufficiently small. Since $\psi(t) = C \, t \abs{\xiv}^2$, which corresponds to a defocus, the two intensity images $I(\xv,\tau)$ and $I(\xv,0)$ can be obtained by recording a defocused and an in-focus image, respectively. Note that this can be realized both sequentially, as indicated by $t$ denoting ``time'', but also simultaneously via the use of a beam-splitter.

In this paper, we generalize the key ideas behind the TIE to derive explicit equations for the phase $\vphi(\xv,t)$ for large classes of modulation functions $\psi(t)$. These include general polynomial modulations, as well as modulations of power and step-function type, which in turn lead to differential and integro-differential equations involving fractional Laplace and Hilbert transforms, among others. The obtained equations can then be used to (numerically) solve the differential Fourier phase retrieval problem \eqref{prob_differential}. The analyzed classes of modulation functions $\psi(t)$ are in particular motivated by so-called Fourier-type wavefront sensors (WFSs), which can mathematically be modeled by \cite{HuNeuSha_2023}
    \begin{equation}\label{model_FWFS}
        I(\xv) := \abs{\FI\kl{e^{i\psi} \F\kl{ \chi_A e^{i\phi}}  }(\xv)}^2 \,.
    \end{equation}
Here, $\chi_A$ denotes the indicator function of the aperture of the imaging system, $\phi$ is the unknown wavefront aberration to be recovered, $\psi$ encodes the shape of the optical element used in the Fourier-type WFS, and $I(\xv)$ is the measured intensity pattern. While the shape function $\psi$ is typically not varied during the measurement process, the structure of the Fourier-type WFS model allows us to nevertheless apply our derived results and explicit equations from the setting of \eqref{def_Ixt}. This then leads both to new reconstruction approaches for Fourier-type WFS, and a new foundation of existing ones.

The structure of this paper is as follows: In Section~\ref{sect_background}, we recall the necessary mathematical background which we use in Section~\ref{sect_phase_diff} to derive explicit formulas for phase reconstruction from differential intensity measurements. In Section~\ref{sect_WFS}, we then apply our derived results to Fourier-type WFSs, and numerically test the corresponding reconstruction approaches in Section~\ref{sect_numerics}, before ending with a short conclusion in Section~\ref{sect_conclusion}.

% % % % % % % % % % % % % % % % % % %
% Section - Mathematical Background %
% % % % % % % % % % % % % % % % % % %
\section{Mathematical Background}\label{sect_background}

In this section, we recall some necessary mathematical background, in particular on Fourier and Hilbert transforms, collected from standard literature such as \cite{McLean_2000,Evans_1998}. First, recall the classical multiindex notation: An $N$-tuple $\alpha = \kl{\alpha_1\,, \dots \,, \alpha_N}$ of non-negative integers is called a multiindex, and $\abs{\alpha} := \alpha_1 + \dots + \alpha_N$ is called the order of $\alpha$. Furthermore, for two multiindices $\alpha$ and $\beta$, we define
	\begin{equation*}
		\binom{\alpha}{\beta} := \frac{\alpha!}{\beta!(\alpha-\beta)!} \,,
		\qquad \text{where} \qquad
		\alpha! := \alpha_1! \cdots \alpha_N! \,.
	\end{equation*} 
Next, for any integrable $u : \RN \to \C$, we consider the Fourier transform 
	\begin{equation*}
		(\F u)(\xiv) := \int_\RN e^{-2\pi i \xiv \cdot \xv} u(\xv) \, d \xv \,,
		\qquad
		\forall \, \xiv = (\xi_1\,,\dots\,,\xi_N)  \in \RN \,,
	\end{equation*}
as well as the inverse Fourier transform
	\begin{equation*}
		(\FI u)(\xv) = \int_\RN e^{2\pi i \xv \cdot \xiv} u(\xiv) \, d\xiv \,, 
		\qquad
		\forall \, \xv = (x_1\,,\dots\,,x_N)\in \RN \,.
	\end{equation*}
Both operators $\F$ and $\FI$ uniquely determine bounded linear operators
	\begin{equation*}
		\F : \LtRN \to \LtRN \,,
		\qquad
		\FI : \LtRN \to \LtRN \,,
	\end{equation*}
satisfying $\F \FI = \FI \F = I$, cf.~\cite{McLean_2000,Evans_1998}. Furthermore, together with the definition
	\begin{equation*}
		D^\alpha u := \frac{\partial^{\abs{\alpha}}}{\partial x_1 ^{\alpha_1} \, \cdots \, \partial x_N^{\alpha_N}} u 
		=
		\partial_{x_1}^{\alpha_1} \, \cdots \, \partial_{x_N}^{\alpha_N} u \,,
	\end{equation*}
it follows that for any sufficiently rapidly decaying function $u$ there holds
	\begin{equation}\label{Fourier_diff}
		D^\alpha \FI( u )(\xv) = \FI\kl{\kl{2\pi i \xiv}^\alpha u(\xiv)}(\xv) \,.
	\end{equation}
Finally, the Fourier transform satisfies the convolution property
	\begin{equation}\label{Fourier_convolution}
		\F( u \ast v ) = \F(u) \F(v) \,,
		\qquad
		\text{where}
		\qquad
		(u \ast v)(\xv) := \int_\Rt u(\xv-\yv) v(\yv) \, d\yv \,.
	\end{equation}
The above definitions and properties can also be extended to a distributional setting. 

Next, consider the Hilbert transform $\H$, which for $u \in L_2(\R)$ is defined by
    \begin{equation*}
        \mathcal{H}(u)(x) := \frac{1}{\pi}\, \operatorname{p.v.} \int_\R\frac{u(z)}{x-z} \, dz \,,
    \end{equation*}
where p.v.\ denotes the Cauchy principal value \cite{King_2009}. It can be shown that  
    \begin{equation*}
        (\H u)(x) = \kl{u(z) \ast \frac{1}{\pi z}}(x) = \FI\kl{ -i \,\sgn (\xi) \F(u)(\xi)}(x)\,,
    \end{equation*}
from which it follows that $\HI = - \H$. More generally, for $u \in \LtRN$, we have
    \begin{equation}\label{eq_Hk_sgn}
        \FI\kl{-i \sgn(\xi_k) \F(u)(\xiv) }(\xv)
        =
        (\Hk u)(\xv) \,,
    \end{equation}
where 
    \begin{equation*}
            (\Hk u)(\xv) := \frac{1}{\pi}\, \operatorname{p.v.} \int_\R\frac{u(x_1,\ldots,x_{k-1},z,x_{k+1},\ldots,x_N)}{x_k-z} \, d z
    \end{equation*}
denotes the one-dimensional Hilbert transform with respect to the $k$-th variable.

Finally, we also require the fractional Laplacian, which for $s \in \R_0^+$ is defined by
    \begin{equation}\label{def_fracLapl}
        (-\Delta)^{s} u(\xv) := \FI\kl{\abs{2\pi \xiv}^{2s} \F(u)(\xiv)  } (\xv) \,,
    \end{equation}
generalizing \eqref{Fourier_diff} to fractional differentiation orders. Note that for $s=1/2$, we have
    \begin{equation}\label{def_fracLaploh}
        (-\Delta)^{\frac{1}{2}} u(\xv) = \FI\kl{\abs{2\pi \xiv} \F(u)(\xiv)  } (\xv) \,,
    \end{equation}
and thus, in the one-dimensional case $u \in \LtR$, where $\abs{\xiv} = \abs{\xi} =  \sgn(\xi)\xi$, we obtain
    \begin{equation}\label{eq_Lapoh_Hilbert}
        (-\Delta)^{\frac{1}{2}} u(x) = \FI\kl{2\pi \,\sgn(\xi)\xi  \F(u)(\xi)  } (x) 
        =
        (\H \circ \partial_x) u(x)
        =
        (\partial_x \circ \H) u(x)\,,
    \end{equation}
which connects the fractional Laplacian with the Hilbert transform. For an analogous connection in higher dimensions, i.e., for $u \in \LtRN$, one considers the Riesz transforms
    \begin{equation}\label{def_Riesz}
        (\Rk u)(\xv) := \FI\kl{- i \frac{\xi_k}{\abs{\xiv}} \F(u)(\xiv)  } (\xv) 
        \,,
    \end{equation}
which can also be expressed via principal value integrals. Since due to \eqref{def_fracLaploh}, there holds
    \begin{equation*}
        (-\Delta)^{\frac{1}{2}} u(\xv) = \sum_{k=1}^N  \FI\kl{2\pi \frac{\xi_k^2}{\abs{ \xiv} } \F(u)(\xiv)  } (\xv) 
        =
        \sum_{k=1}^N  \FI\kl{- i\frac{\xi_k}{\abs{ \xiv} }(2 \pi i \xi_k) \F(u)(\xiv)  } (\xv) \,,
    \end{equation*}
it follows with the vectorial Riesz transform $\mathcal{R}:=(\mathcal{R}_{x_1},\dots,\mathcal{R}_{x_N})$ that
    \begin{equation}\label{helper_Riesz}
        (-\Delta)^{\frac{1}{2}}  u(\xv)
        = \sum_{k=1}^N (\Rk \circ \partial_{x_k}) u(\xv)
        = \sum_{k=1}^N (\partial_{x_k} \circ \Rk) u(\xv)
        = \div\kl{(\mathcal{R} u)(\xv)} \,,
    \end{equation}
which generalized \eqref{eq_Lapoh_Hilbert} to higher dimensions by connecting the fractional Laplacian and the Riesz transforms. Note that in the 1D case $u \in \LtR$, there holds $\mathcal{R} u = \H u$.

% % % % % % % % % % % % % % % % % % % % % % % % % % % % % % %
% Section - Phase Retrieval from Differential Measurements  %
% % % % % % % % % % % % % % % % % % % % % % % % % % % % % % %
\section{Phase Retrieval from Differential Measurements}\label{sect_phase_diff}

In this section, we consider the differential Fourier phase retrieval problem \eqref{prob_differential}, i.e.,
    \begin{equation*}
        \text{Given} \quad I(\xv,t) \,\,\, \text{and} \,\,\, \ddt  I(\xv,t) \,, \quad \text{find} \quad u(\xiv) \qquad (u : \R^N \to \C )  \,,
    \end{equation*}
where the time-dependent intensity measurements $I(\xv,t)$ are defined by \eqref{def_Ixt}, i.e.,
    \begin{equation*}
        I(\xv,t) := \abs{\FI (e^{i\psi(t)} u)(\xv)}^2 \,,
    \end{equation*}
with a known function $\psi(t) = \psi(\xiv,t)$. To do so, we first recall the polar form \eqref{polar_form_t}, i.e.,
    \begin{equation*}
        \FI(e^{i\psi(t)} u )(\xv)= A( \xv,t ) e^{i \vphi( \xv,t)} \,,
        \qquad
        \text{with amplitude} 
        \qquad
        A(\xv,t) := \sqrt{I(\xv,t)} \,,
    \end{equation*}
for some unknown phase function $\vphi(\xv,t)$. Note that since this can be rewritten as
    \begin{equation*}
        u (\xiv)= e^{-i\psi(t)}\F\kl{A( \xv,t ) e^{i \vphi( \xv,t)}}(\xiv) \,, 
        \qquad
        \forall \, t \in [0,T] \,,
    \end{equation*}
and since $A(\xv,t) = \sqrt{I(\xv,t)}$ is assumed to be known, the differential Fourier phase retrieval problem \eqref{prob_differential} can be solved by computing the phase $\vphi(\xv,\tau)$ for a single value of $\tau \in [0,T]$. Now, throughout the subsequent analysis, we make the following

\begin{assumption}\label{ass_main}
The functions $u : \RN \to \C$ and $\vphi\,,\psi\,,A : \RN \times [0,T]  \to \R$ for some $T > 0$ satisfy the following properties: $u \in \LtRN$ and $\vphi(\cdot,t)\,,A(\cdot,t) \in \LtRN$ for each $t \in [0,T]$. Furthermore, $\psi(\xiv,\cdot) \in C^1([0,T])$, and $\psi'(t) := \ddt \psi (t) = \ddt \psi (\xiv, t)$. 
\end{assumption}

Note that under the above assumption, the intensity function $I(\xv,t)$ is well-defined for each $t \in [0,T]$. Now, going back to \cite{Gonsalves_1987}, the key idea of phase retrieval from differential measurements is to consider the time derivative of the intensity function $I(\xv,t)$, i.e.,
	\begin{equation}\label{difference_quotient}
		\ddt I(\xv,t) 
		= \lim_{h \to 0} \frac{I(\xv,t+h) - I(\xv,t)}{h}
		\approx \frac{I(\xv,t+\tau) - I(\xv,t)}{\tau} \,,
	\end{equation}
where the last approximation is valid for $\tau \approx 0$. Note that this approximation can be used to numerically compute $\ddt I(\xv,t)$ given suitable samples of $I(\xv,t)$. To proceed further, we now compute an explicit expression for $\ddt I(\xv,t)$ in the following  

\begin{theorem}\label{thm_ddt_I_general}
Let $I(\xv,t)$ be defined as in \eqref{def_Ixt}, let Assumption~\ref{ass_main} hold, and assume that for each $t \in [0,T]$ there holds $u\psi'(t) \in \LtRN$. Then $I(\xv,t)$ is continuously differentiable with respect to $t$, i.e., $I(\xv,t) \in C^1([0,T],L^1(\RN))$, and there holds
	\begin{equation}\label{eq_ddt_I_general}
		\ddt I(\xv,t) = 2 \Re\kl{ \overline{\FI(e^{i\psi(t)}u)(\xv)} \FI(e^{i\psi(t)} u i \psi'(t) )(\xv) } \,.
	\end{equation}
Furthermore, if $A(\xv,t)$, $\vphi(\xv,t)$ as defined in \eqref{polar_form_t} are also differentiable wrt.\ $t$, then
	\begin{equation}\label{eq_ddt_phi_general}
		2 I(\xv,t) \ddt \vphi(\xv,t) = \Im\kl{ \overline{\FI(e^{i\psi(t)}u)(\xv)} \FI(e^{i\psi(t)} u i \psi'(t) )(\xv) } \,.
	\end{equation}
\end{theorem}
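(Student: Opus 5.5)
The plan is to differentiate the $\LtRN$-valued map $t \mapsto v(t) := e^{i\psi(t)}u$, push the derivative through the bounded operator $\FI$, and then use the product rule for the pointwise product $I = \overline{\FI v}\,\FI v$. The phase identity will then come from comparing this with the polar form \eqref{polar_form_t}.

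\textbf{Step 1: differentiability of $v$ in $\LtRN$.} Set $w(s) := i\psi'(s)e^{i\psi(s)}u$. By hypothesis $w(s) \in \LtRN$ for every $s \in [0,T]$, since $\abs{w(s)} = \abs{u\psi'(s)}$. Because $\psi(\xiv,\cdot) \in C^1([0,T])$, for a.e.\ $\xiv$ the fundamental theorem of calculus gives
\begin{equation*}
v(t+h)(\xiv) - v(t)(\xiv) = \int_t^{t+h} w(s)(\xiv)\,ds .
\end{equation*}
I first aim to show that $s \mapsto w(s)$ is continuous into $\LtRN$. Pointwise in $\xiv$, $w(s) \to w(t)$ as $s \to t$ by continuity of $\psi'$ and $\psi$. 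Convergence of norms, $\norm{w(s)} = \norm{u\psi'(s)} \to \norm{u\psi'(t)}$, would then give $L_2$-convergence by the Brezis--Lieb/Radon--Riesz argument.

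This is the \textbf{main obstacle}. The pointwise-in-$t$ hypothesis $u\psi'(t) \in \LtRN$ does not obviously give a dominating function, and I would have to extract the norm continuity from it. My first attempt is to argue via Fatou plus the $C^1$ structure on the compact interval $[0,T]$. If that fails, I would read the hypothesis as implicitly including this continuity.

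Granting continuity, the difference quotient $\frac1h(v(t+h)-v(t)) = \frac1h\int_t^{t+h} w(s)\,ds$ is a Bochner average and converges to $w(t)$ in $\LtRN$. Hence $v \in C^1([0,T],\LtRN)$ with $v' = w$.

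\textbf{Step 2: formula for $\ddt I$.} Since $\FI$ is bounded and linear on $\LtRN$, $F(t) := \FI v(t)$ lies in $C^1([0,T],\LtRN)$ with $F'(t) = \FI(e^{i\psi(t)}ui\psi'(t))$. The map $(f,g) \mapsto \overline f g$ is bounded bilinear from $\LtRN \times \LtRN$ to $L^1(\RN)$ by Cauchy--Schwarz. The product rule therefore gives $I = \overline F F \in C^1([0,T],L^1(\RN))$ with
\begin{equation*}
\ddt I = \overline{F'}F + \overline F F' = 2\Re(\overline F F'),
\end{equation*}
which is \eqref{eq_ddt_I_general}.

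\textbf{Step 3: phase equation.} Assume $A$ and $\vphi$ are differentiable in $t$. Writing $F = Ae^{i\vphi}$, we get $F' = (\partial_t A + iA\,\partial_t\vphi)e^{i\vphi}$, and so
\begin{equation*}
\overline F F' = A\,\partial_t A + iI\,\partial_t\vphi = \tfrac12\ddt I + iI\,\partial_t\vphi .
\end{equation*}
The real part of this reproduces Step~2 and serves as a consistency check. Taking imaginary parts expresses $I\,\partial_t\vphi$ through $\Im(\overline F F')$. I would then match the constant in front against the stated identity \eqref{eq_ddt_phi_general}, using the same normalization that produces the factor $2$ in \eqref{eq_ddt_I_general}. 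This constant bookkeeping needs a careful recheck, because the direct computation above yields a factor $1$ rather than $2$ on the left-hand side.
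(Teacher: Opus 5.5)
Your route is the paper's own: the product/chain rule for $I=\overline{F}F$, then the polar-form computation. Every computation you actually carry out is correct, and Steps~1--2 supply details that the paper's one-line ``chain rule plus regularity'' argument omits. However, neither point where you hedge can be closed the way you propose. Both are really defects of the statement.

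\textbf{Step 1: the continuity you need is not in the hypotheses.} The $\LtRN$-continuity of $s\mapsto u\psi'(s)$ does not follow from the pointwise hypotheses on $u$ and $\psi$, so the Fatou route cannot work. Fatou only gives $\norm{u\psi'(t)}\le\liminf_{s\to t}\norm{u\psi'(s)}$. Compactness of $[0,T]$ gives $\sup_t\abs{\psi'(\xiv,t)}<\infty$ for each fixed $\xiv$, but not $\sup_t\abs{u\psi'(t)}\in\LtRN$.

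For a concrete counterexample, take $N=1$, a nonzero $\eta\in C_c^\infty((1,2))$, $u=\chi_{(0,2T)}$, and $\psi(\xi,t):=\int_0^t s^{-1/2}\eta(\xi/s)\,ds$.
For each fixed $\xi$ the integrand vanishes near $s=0$. Hence $\psi(\xi,\cdot)\in C^1([0,T])$ with $\psi'(\xi,0)=0$, and $u\psi'(t)\in\LtRN$ for every $t$. Yet $\norm{u\psi'(t)}=\norm{\eta}$ for all $t\in(0,T]$. Any $\LtRN$-derivative of $t\mapsto e^{i\psi(t)}u$ must coincide a.e.\ with $i\psi'(t)e^{i\psi(t)}u$, so this map is not in $C^1([0,T],\LtRN)$.

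You therefore have to take your fallback and add an explicit hypothesis. Either $u\psi'\in C([0,T],\LtRN)$ or $\sup_{t}\abs{u\psi'(t)}\in\LtRN$ works; the latter yields the continuity by dominated convergence. The paper's proof implicitly relies on something of this kind as well.

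\textbf{Step 3: there is no constant to match.} The factor $2$ in \eqref{eq_ddt_I_general} comes from $\overline{F'}F+\overline{F}F'=2\Re(\overline{F}F')$ and has no counterpart for the imaginary part. Your identity $\Im(\overline{F}F')=I\,\ddt\vphi$ is correct, and \eqref{eq_ddt_phi_general} as printed is off by a factor of $2$.

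For instance, take $\psi(\xiv,t)=ct$ with $c\neq0$. Then $F(t)=e^{ict}F(0)$, so $\ddt\vphi=c$ wherever $I>0$ and $\Im(\overline{F}F')=cI$. The printed formula would instead require $2cI=cI$.

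The paper's own proof in fact ends with $2\Im(\overline{w}\,\ddt w)=2I\,\ddt\vphi$, which is your identity rather than the stated one. This corrected form is also the one Theorem~\ref{thm_main_polynomials} is consistent with. So prove $I\,\ddt\vphi=\Im(\overline{F}F')$ and flag the misprint, rather than look for a normalization that would reconcile the constant.
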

\begin{proof}
The first part of the theorem follows from the chain rule applied to \eqref{def_Ixt} as well as the regularity assumptions on the functions $u$ and $\psi$. For the second part, consider
    \begin{equation*}
        w(\xv, t):= \FI(e^{i \psi(t)}u)(\xv) \,,
        \qquad
        \text{for which}
        \qquad 
        \ddt w(\xv, t) = \FI(e^{i \psi(t)} i \psi'(t) u)(\xv) \,.
    \end{equation*}
Together with the polar form \eqref{polar_form_t}, we obtain that
    \begin{equation*}
    \begin{split}
        & 2 \Im \kl{\overline{w(\xv,t)} \ddt w(\xv,t)} 
        = 
        2 \Im\kl{ A( \xv, t) e^{-i \vphi( \xv,t)} \ddt \kl{A(\xv,t)e^{i \vphi(\xv,t)}}}
        \\
        & \qquad 
        = 2 \Im\kl{A( \xv, t) e^{-i \vphi( \xv,t)}  \kl{\frac{\partial A(\xv,t)}{\partial t}e^{i \vphi( \xv,t)}+ i A(\xv,t) \frac{\partial \vphi(\xv,t)}{\partial t}e^{i \vphi( \xv,t)}}}
        \\
        & \qquad 
        = 2 A(\xv,t)^2 \frac{\partial }{\partial t}\vphi(\xv,t)
        = 
        2 I( \xv,t) \ddt \vphi( \xv,t)
    \end{split}
    \end{equation*}
which directly yields the assertion.
\end{proof}

The above result, and in particular \eqref{eq_ddt_I_general}, forms the starting point of our subsequent analysis and derivation of (integro-)differential equations for the phase $\vphi(\xv,t)$. Note first by using the convolution property \eqref{Fourier_convolution}, the right hand side of \eqref{eq_ddt_I_general} takes the form
	\begin{equation*}%\label{interference_conv}
		2 \Re\kl{ \overline{\FI(e^{i\psi(t)}u)(\xv)} \kl{\FI(i \psi'(t) ) \ast \FI(e^{i\psi(t)} u)}(\xv) } \,,
	\end{equation*}
which together with the polar decomposition \eqref{polar_form_t} can be rewritten as
    \begin{equation}\label{interference_conv_polar}
		2 \Re\kl{ A(\xv,t)e^{-i\vphi(\xv,t)} \kl{\FI(i \psi'(t) ) \ast \kl{A(\cdot,t)e^{-i\vphi(\cdot,t)}} }(\xv) } \,.
	\end{equation}
This is reminiscent of the interference term $2 \Re(\overline{\FI (u)(\xv)}\FI (v)(\xv) )$ in \eqref{OCT}, where now $v(\xiv)$ is a modified version of the unknown $u(\xiv)$ interfering with $u(\xiv)$ itself. Now the key idea, which goes back to arguments from \cite{Gonsalves_1987}, is that for suitably chosen functions $\psi(t)$, the interference term \eqref{interference_conv_polar} may take on a simpler form, such that then \eqref{eq_ddt_I_general}, i.e.,
    \begin{equation*}
		\ddt I(\xv,t) = 2 \Re\kl{ A(\xv,t)e^{-i\vphi(\xv,t)} \kl{\FI(i \psi'(t) ) \ast \kl{A(\cdot,t)e^{-i\vphi(\cdot,t)}} }(\xv) }  \,,
	\end{equation*}
turns into an (integro-)differential equation for the unknown phase $\vphi(\xv,t)$ which is uniquely solvable (given suitable boundary conditions). That this is indeed the case is the content of the subsequent sections, which generalize and extend arguments from \cite{Gonsalves_1987}.

% % % % % % % % % % % % % % % % % % % %
% Subsection - Polynomial Modulations %
% % % % % % % % % % % % % % % % % % % %
\subsection{Polynomial Modulations}

Inspired by \cite{Gonsalves_1987}, we first consider modulation functions $\psi(\xiv,t)$ of polynomial type.

\begin{lemma}\label{lem_polynomial_pseudo}
Let $u \in \LtRN$, let $\psi_0 \in \LtRN$ be independent of $t$, and define
	\begin{equation}\label{psi_poly}
		\psi(\xiv,t) := \psi_0(\xiv) +  \sum\limits_{\abs{\alpha} \leq M} \ca(t) \, (2\pi)^\alpha \xiv^\alpha \,, 
		\qquad \forall \, t \in [0,T]\,, \xiv \in \RN \,, 
	\end{equation}
where $M \in \N$ and $\ca : [0,T] \to \R$  are continuously differentiable coefficient functions. Furthermore, assume that for each $t \in [0,T]$ there holds $u\psi'(t) \in \LtRN$, i.e., 
	\begin{equation}\label{eq_u_poly_Lt}
		\int_{\RN} \abs{\sum_{\abs{\alpha}\leq M} \ca'(t)\, \xiv^\alpha \, u(\xiv)}^2 \, d\xiv  < \infty \,.
	\end{equation}
Then there holds
    \begin{equation}\label{eq_pseudo_diff}
        \FI(e^{i\psi(t)} u i \psi'(t) )(\xv) = \Pcat \kl{ \FI\kl{e^{i\psi(t)} u }}(\xv) \,,
    \end{equation}
where the (pseudo-)differential operator $\Pcat$ is defined by
	\begin{equation}\label{def_Pca}
		\Pcat:= \sum\limits_{\abs{\alpha} \leq M} (-i)^{\abs{\alpha}-1} \ca'(t) D^\alpha \,.
	\end{equation} 
\end{lemma}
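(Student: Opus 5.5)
Since $\psi_0$ does not depend on $t$, differentiating \eqref{psi_poly} in $t$ gives
\begin{equation*}
i\psi'(\xiv,t) = \sum_{\abs{\alpha}\leq M} i\,\ca'(t)\,(2\pi)^{\abs{\alpha}}\xiv^\alpha .
\end{equation*}
Here $(2\pi)^\alpha$ means $(2\pi)^{\abs{\alpha}}$ in multiindex notation. The plan is to write each monomial in the form required by the differentiation rule \eqref{Fourier_diff}, namely as a constant times $(2\pi i\xiv)^\alpha = i^{\abs{\alpha}}(2\pi)^{\abs{\alpha}}\xiv^\alpha$. This gives
\begin{equation*}
i\,\ca'(t)(2\pi)^{\abs{\alpha}}\xiv^\alpha = i^{1-\abs{\alpha}}\,\ca'(t)\,(2\pi i\xiv)^\alpha = (-i)^{\abs{\alpha}-1}\ca'(t)\,(2\pi i\xiv)^\alpha ,
\end{equation*}
using $i^{1-k} = (-i)^{k-1}$, which holds because $i^{-1}=-i$. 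This accounts for the prefactor in \eqref{def_Pca}.

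Next, set $v := e^{i\psi(t)}u$. Since $\psi$ is real-valued, $\abs{v}=\abs{u}$, so $v\in\LtRN$. Hypothesis \eqref{eq_u_poly_Lt} gives $v\,\psi'(t)\in\LtRN$, so the left-hand side of \eqref{eq_pseudo_diff} is well defined. By linearity of $\FI$,
\begin{equation*}
\FI(v\, i\psi'(t)) = \sum_{\abs{\alpha}\leq M} (-i)^{\abs{\alpha}-1}\ca'(t)\,\FI\bigl((2\pi i\xiv)^\alpha v\bigr),
\end{equation*}
and by \eqref{Fourier_diff} each term equals $(-i)^{\abs{\alpha}-1}\ca'(t)\,D^\alpha\FI(v)$. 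Summing over $\alpha$ gives $\Pcat\,\FI(v)$, which is \eqref{eq_pseudo_diff}.

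The main obstacle is justifying linearity and \eqref{Fourier_diff}. That identity was stated only for sufficiently rapidly decaying functions, and the individual products $\xiv^\alpha v$ need not lie in $\LtRN$ even when their sum does. I would handle this in the distributional setting mentioned after \eqref{Fourier_convolution}. Since $v\in\LtRN$ is a tempered distribution, $\FI(v)$ is tempered, and \eqref{Fourier_diff} holds for tempered distributions with $D^\alpha$ the distributional derivative. Multiplication by polynomials is continuous on tempered distributions, so each $\FI((2\pi i\xiv)^\alpha v)$ is a well-defined tempered distribution, and linearity of $\FI$ on that space is immediate.

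Consequently, \eqref{eq_pseudo_diff} holds as an identity of tempered distributions, with the left-hand side in $\LtRN$. Hence $\Pcat\FI(v)$, understood distributionally, is an $L^2$ function, and the identity also holds almost everywhere.
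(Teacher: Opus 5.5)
Your proof is correct and follows essentially the same route as the paper: differentiate $\psi$ in $t$, rewrite each monomial as $(-i)^{\abs{\alpha}-1}\ca'(t)(2\pi i\xiv)^\alpha$, and apply linearity of $\FI$ together with \eqref{Fourier_diff}. Your additional tempered-distribution argument justifies \eqref{Fourier_diff} for the individual terms $\xiv^\alpha e^{i\psi(t)}u$, which need not lie in $\LtRN$, and so fills in a step the paper leaves implicit.
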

\begin{proof}
Note first that from the definition of $\psi(\xiv,t)$ it follows that
	\begin{equation*}
		\psi'(t) = \ddt \psi(\xiv,t) =  \sum\limits_{\abs{\alpha} \leq M} \ca'(t) \, (2\pi)^\alpha \xiv^\alpha  \,.
	\end{equation*}	
Hence, it follows together with \eqref{Fourier_diff} that
	\begin{equation*}
	\begin{split}
		&\FI(e^{i\psi(t)} u i \psi'(t) )(\xv) 
		= 
        \FI\kl{e^{i\psi(t)} u i \sum\limits_{\abs{\alpha} \leq M} \ca'(t) \, (2\pi)^\alpha \xiv^\alpha  }(\xv) 
        \\
        & 
        \qquad =
		\sum\limits_{\abs{\alpha} \leq M} (-i)^{\abs{\alpha}-1} \ca'(t) \FI\kl{(2\pi i \xiv)^\alpha e^{i\psi(t)} u }(\xv)
		\\
		& 
        \qquad =
		\sum\limits_{\abs{\alpha} \leq M} (-i)^{\abs{\alpha}-1} \ca'(t)  D^\alpha \FI\kl{e^{i\psi(t)} u }(\xv) \,,
	\end{split}
	\end{equation*}
which together with the definition \eqref{def_Pca} of $\Pcat$ now yields the assertion. 
\end{proof}

\begin{corollary}
Let Assumption~\ref{ass_main} and the assumptions of Lemma~\ref{lem_polynomial_pseudo} hold. Then for $I(\xv,t)$ as in \eqref{def_Ixt}, and the amplitude $A(\xv,t)$ and phase $\vphi(\xv,t)$ as in \eqref{polar_form_t}, there holds
	\begin{equation}\label{eq_ddt_I_poly_diff}
		\ddt I(\xv,t) = 2 \Re\kl{ A(\xv,t)e^{-i\vphi(\xv,t)} \, \Pcat \kl{A(\xv,t)e^{i\vphi(\xv,t)}} } \,,
	\end{equation}
and, if $A(\xv,t)$ and $\vphi(\xv,t)$ are also differentiable wrt.\ $t$, then
    \begin{equation}\label{eq_ddt_phi_poly_diff}
		2 I(\xv,t)\ddt \vphi(\xv,t)  =  \Im\kl{ A(\xv,t)e^{-i\vphi(\xv,t)} \, \Pcat \kl{A(\xv,t)e^{i\vphi(\xv,t)}} } \,.
	\end{equation}
\end{corollary}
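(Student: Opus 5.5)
The corollary follows by substituting the identity \eqref{eq_pseudo_diff} of Lemma~\ref{lem_polynomial_pseudo} into the two formulas of Theorem~\ref{thm_ddt_I_general}.

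\textbf{Verifying the hypotheses.} Assumption~\ref{ass_main} is assumed directly. Theorem~\ref{thm_ddt_I_general} additionally requires $u\psi'(t) \in \LtRN$ for each $t$, which is exactly \eqref{eq_u_poly_Lt}, since $\psi'(t) = \sum_{\abs{\alpha}\leq M} \ca'(t)(2\pi)^\alpha\xiv^\alpha$; the fixed factors $(2\pi)^\alpha$ play no role here. Also, $\psi(\xiv,\cdot)\in C^1([0,T])$ holds because the coefficients $\ca$ are continuously differentiable and $\psi_0$ does not depend on $t$. Hence \eqref{eq_ddt_I_general} holds, and, under the extra differentiability of $A$ and $\vphi$, so does \eqref{eq_ddt_phi_general}.

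\textbf{Substitution.} Lemma~\ref{lem_polynomial_pseudo} gives $\FI(e^{i\psi(t)}u\, i\psi'(t)) = \Pcat\bigl(\FI(e^{i\psi(t)}u)\bigr)$. By the polar form \eqref{polar_form_t}, $\FI(e^{i\psi(t)}u)(\xv) = A(\xv,t)e^{i\vphi(\xv,t)}$, and therefore $\overline{\FI(e^{i\psi(t)}u)(\xv)} = A(\xv,t)e^{-i\vphi(\xv,t)}$, using that $A$ and $\vphi$ are real-valued. Inserting both expressions into \eqref{eq_ddt_I_general} gives \eqref{eq_ddt_I_poly_diff}, and inserting them into \eqref{eq_ddt_phi_general} gives \eqref{eq_ddt_phi_poly_diff}.

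\textbf{Main point of care.} The differential operator $\Pcat$ acts on $A(\cdot,t)e^{i\vphi(\cdot,t)}$ only in the distributional sense, via \eqref{Fourier_diff}. This is legitimate because $(2\pi i\xiv)^\alpha e^{i\psi(t)}u$ is handled as a combination lying in $L^2$ by \eqref{eq_u_poly_Lt}. The resulting products are then products of two $L^2$ functions, so they lie in $L^1$, which matches the regularity class stated in Theorem~\ref{thm_ddt_I_general}. No other difficulty arises.
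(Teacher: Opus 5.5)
Your argument is exactly the paper's proof: insert \eqref{eq_pseudo_diff} and \eqref{polar_form_t} into \eqref{eq_ddt_I_general} and \eqref{eq_ddt_phi_general}. For \eqref{eq_ddt_I_poly_diff} it is complete. For \eqref{eq_ddt_phi_poly_diff}, however, quoting \eqref{eq_ddt_phi_general} imports a factor-of-two error, so the identity you end up asserting is off by a factor of two. With $w := \FI(e^{i\psi(t)}u) = Ae^{i\vphi}$ one has
\begin{equation*}
\overline{w}\,\ddt w = A\,\ddt A + i A^2\,\ddt \vphi \,.
\end{equation*}
Hence $\Re(\overline{w}\,\ddt w) = \tfrac12\ddt I$, consistent with \eqref{eq_ddt_I_general}, but $\Im(\overline{w}\,\ddt w) = I\,\ddt\vphi$. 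In fact, the proof of Theorem~\ref{thm_ddt_I_general} itself computes $2\Im(\overline{w}\,\ddt w) = 2I\,\ddt\vphi$, which is not what its statement records. Your substitution therefore proves
\begin{equation*}
I(\xv,t)\,\ddt\vphi(\xv,t) = \Im\kl{A(\xv,t)e^{-i\vphi(\xv,t)}\,\Pcat\kl{A(\xv,t)e^{i\vphi(\xv,t)}}} \,,
\end{equation*}
i.e.\ \eqref{eq_ddt_phi_poly_diff} holds only with an extra factor $2$ on its right-hand side.

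Two checks confirm this. First, take $\psi(\xiv,t) = ct$. Only $\alpha=0$ occurs, so $\Pcat = ic$, and the phase is $\vphi(\xv,t) = \vphi(\xv,0)+ct$. The stated version then has left-hand side $2cI$ but right-hand side $cI$. Second, \eqref{eq_Pcat_chain} gives $\Im\bigl(Ae^{-i\vphi}\Pcat(Ae^{i\vphi})\bigr) = A\,\Scat(A,\vphi)$. Only the corrected identity then yields \eqref{eq_ddt_phi_poly_diff_S} of Theorem~\ref{thm_main_polynomials}. The stated version would give $2I\,\ddt\vphi = A\,\Scat(A,\vphi)$ instead.

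A minor point: drop the claim that the factors $(2\pi)^\alpha$ play no role. For $N\ge 2$ and terms of different degree, the two square-integrability conditions genuinely differ. For example, compare $\xi_1^2-\xi_2$ with $(2\pi)^2\xi_1^2-2\pi\xi_2$ for a $u$ concentrated near the parabola $\xi_2=\xi_1^2$. This is harmless, since Lemma~\ref{lem_polynomial_pseudo} assumes $u\psi'(t)\in\LtRN$ directly, and that is all Theorem~\ref{thm_ddt_I_general} needs.
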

\begin{proof}
This follows by combining \eqref{eq_pseudo_diff} and \eqref{polar_form_t} with \eqref{eq_ddt_I_general} and \eqref{eq_ddt_phi_general}, respectively.
\end{proof}	

Next, note that the assumption $u\psi'(t) \in \LtRN$ guarantees the well-definedness of 
    \begin{equation*}
        \FI(e^{i\psi(t)} u i \psi'(t) )(\xv) = \Pcat \kl{ \FI\kl{e^{i\psi(t)} u }}(\xv) 
        = \Pcat \kl{A(\xv,t)e^{i\vphi(\xv,t)}} \,,
    \end{equation*}
Since $\Pcat$ is a differential operator of order (at most) $M$, and assuming that $A(\xv,t)$ and $\vphi(\xv,t)$ are sufficiently smooth, e.g., $A(\cdot,t) \,, \vphi(\cdot,t) \in H^M(\RN)$, it can be seen that
	\begin{equation}\label{eq_Pcat_chain}
		\Pcat \kl{A(\xv,t) e^{i \vphi(\xv,t)} }(\xv) = e^{i \vphi(\xv,t)} \kl{ \Lcat(A,\vphi)(\xv,t) + i \Scat(A,\vphi)(\xv,t)} \,,
	\end{equation}
where $\Lcat(A,\vphi)(\xv,t)$ and $\Scat(A,\vphi)(\xv,t)$ are real-valued functions depending only on $c_\alpha(t)$ and spatial derivatives (up to order $M$) of $A(\xv,t)$ and $\vphi(\xv,t)$, respectively. With this, we can now state our first main result: 

\begin{theorem}\label{thm_main_polynomials}
Let Assumption~\ref{ass_main} and the assumptions of Lemma~\ref{lem_polynomial_pseudo} hold, and let the intensity $I(\xv,t)$ be as in \eqref{def_Ixt}. Furthermore, let the amplitude $A(\xv,t)$ and the phase $\vphi(\xv,t)$ defined via \eqref{polar_form_t} be sufficiently smooth, e.g., $A(\cdot,t) \,, \vphi(\cdot,t) \in H^M(\Omega)$ for all $t \in [0,T]$. Then with $\Lcat(A,\vphi)(\xv,t)$ and $\Scat(A,\vphi)(\xv,t)$ as in \eqref{eq_Pcat_chain} it follows that
    \begin{equation}\label{eq_ddt_I_poly_diff_L}
		\ddt I(\xv,t) =  2 A(\xv,t) \Lcat(A,\vphi)(\xv,t) \,,
	\end{equation}
and, if $A(\xv,t)$ and $\vphi(\xv,t)$ are also differentiable wrt.\ $t$, then
    \begin{equation}\label{eq_ddt_phi_poly_diff_S}
		2 I(\xv,t)\ddt \vphi(\xv,t)  =   2 A(\xv,t) \Scat(A,\vphi)(\xv,t) \,.
	\end{equation}
\end{theorem}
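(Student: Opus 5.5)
The plan is to substitute the decomposition \eqref{eq_Pcat_chain} directly into the identities of the preceding Corollary and simplify. First, under the smoothness assumption $A(\cdot,t),\vphi(\cdot,t)\in H^M$, I would justify \eqref{eq_Pcat_chain}. Expanding $D^\alpha\kl{A e^{i\vphi}}$ with the multivariate Leibniz rule and the Faà di Bruno formula gives $e^{i\vphi}$ times a polynomial in $i$ whose coefficients are real products of spatial derivatives of $A$ and $\vphi$ up to order $\abs{\alpha}$. Multiplying by the factors $(-i)^{\abs{\alpha}-1}\ca'(t)$, which lie in $\{\pm 1,\pm i\}$ times a real number, and summing over $\abs{\alpha}\le M$, we can collect real and imaginary parts. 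This defines the real functions $\Lcat(A,\vphi)$ and $\Scat(A,\vphi)$.

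Next I would insert \eqref{eq_Pcat_chain} into \eqref{eq_ddt_I_poly_diff}. Since $A$ is real, the product becomes
\[
A(\xv,t)e^{-i\vphi(\xv,t)}\,e^{i\vphi(\xv,t)}\kl{\Lcat(A,\vphi)+i\Scat(A,\vphi)} = A(\xv,t)\,\Lcat(A,\vphi)(\xv,t) + i\,A(\xv,t)\,\Scat(A,\vphi)(\xv,t).
\]
Its real part is $A\Lcat$, so \eqref{eq_ddt_I_poly_diff_L} follows. Taking the imaginary part instead and using \eqref{eq_ddt_phi_poly_diff} gives $2I\,\ddt\vphi = A\Scat$.

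There is one point I would check explicitly. The statement \eqref{eq_ddt_phi_poly_diff_S} has the factor $2A\Scat$, whereas the computation above yields $A\Scat$. The discrepancy comes from \eqref{eq_ddt_phi_general}, whose left side carries the factor $2I$ and whose right side carries no factor 2. I would therefore either normalize consistently, noting that the claimed identity holds with the convention implicit in Theorem~\ref{thm_ddt_I_general}, or trace the factor back through its proof, where $2\Im(\bar w\,\partial_t w) = 2I\,\partial_t\vphi$.

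The main obstacle is the rigorous justification of the pointwise chain-rule expansion \eqref{eq_Pcat_chain}, since products of Sobolev derivatives of order up to $M$ need not lie in $L^2$. I would handle this in one of two ways. The first is to interpret the identity almost everywhere, assuming enough extra regularity, for instance $H^M\cap W^{M,\infty}$ or classical $C^M$, that products of derivatives make sense. The second is to prove it for smooth functions and pass to the limit by density. The algebra itself is routine.
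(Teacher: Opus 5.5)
Your proposal follows the paper's proof: substitute \eqref{eq_Pcat_chain} into the Corollary's identities \eqref{eq_ddt_I_poly_diff}--\eqref{eq_ddt_phi_poly_diff} and take real and imaginary parts, using that $A$, $\Lcat(A,\vphi)$ and $\Scat(A,\vphi)$ are real (the paper simply takes \eqref{eq_Pcat_chain} as given, so your Leibniz/Fa\`a di Bruno justification and regularity caveats go beyond what is required). The factor of $2$ you flagged is a genuine misprint in \eqref{eq_ddt_phi_general}, inherited by \eqref{eq_ddt_phi_poly_diff} and passed over in the paper's proof, but the theorem itself is correct: with $w = A e^{i\vphi}$ one has $\overline{w}\,\ddt w = A\,\ddt A + i A^2\,\ddt\vphi$, so $I\,\ddt\vphi = \Im\kl{\overline{w}\,\ddt w} = A\,\Scat(A,\vphi)$, which is exactly $2I\,\ddt\vphi = 2A\,\Scat(A,\vphi)$ --- so commit to your second option (tracing the constant through the proof of Theorem~\ref{thm_ddt_I_general}), since using \eqref{eq_ddt_phi_general} as printed gives the wrong constant $A\,\Scat(A,\vphi)$.
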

\begin{proof}
First, note that due to the decomposition \eqref{eq_Pcat_chain} there holds
    \begin{equation*}
        A(\xv,t)e^{-i\vphi(\xv,t)} \, \Pcat \kl{A(\xv,t)e^{i\vphi(\xv,t)}} 
        = 
        A(\xv,t) \kl{ \Lcat(A,\vphi)(\xv,t) + i \,\Scat(A,\vphi)(\xv,t)} \,.
    \end{equation*}
Inserting this into \eqref{eq_ddt_I_poly_diff} and \eqref{eq_ddt_phi_poly_diff}, and noting that both $A(\xv,t)$, $\Lcat(A,\vphi)(\xv,t)$, and $\Scat(A,\vphi)(\xv,t)$ are real-valued functions now yields the assertion.
\end{proof}

Due to the above result, if $\psi(t)$ is of the form \eqref{psi_poly}, and since $A(\xv,t) = \sqrt{I(\xv,t)}$ and $\ddt I(\xv,t)$ are assumed to be known, the unknown phase $\vphi(\xv,t)$ can be determined as the solution of the real-valued, linear PDE \eqref{eq_ddt_I_poly_diff_L}. To illustrate this, we now consider a common, specific choice of $\psi(t)$ connected to biplane imaging and the TIE.

% % % % % % % % % % % % % % % % % % % %
% Subsection - Defocus, Biplane, TIE  %
% % % % % % % % % % % % % % % % % % % %
\subsection{Quadratic Modulations and Transport of Intensity}\label{subsect_defoucs}

In this section, we consider the case that the function $\psi(t)$ in \eqref{psi_poly} has the specific form
	\begin{equation}\label{def_psi_defocus}
		\psi(\xiv,t) := \psi_0(\xiv) +  \sum\limits_{k=1}^N c_k(t) \, (2\pi)^2 \xi_k^2 \,, 
		\qquad \forall \, t \in [0,T]\,, \xiv = (\xi_1\,,\dots\,,\xi_N) \in \RN \,,
	\end{equation}
for some coefficients $c_k \in C^1([0,T])$. In case that $\psi_0 = 0$ and $c_k(t) = ct/2$, we have 
    \begin{equation*}
        \psi(\xiv,t) = c t (2\pi^2 )\abs{ \xiv}^2 \,,
        \qquad
        \text{and thus}
        \qquad
        I(\xv,t) = \abs{\FI (e^{i c t  (2\pi^2) \abs{ \xiv}^2} u(\xiv))(\xv)}^2 \,.
    \end{equation*}
Physically, this choice of $\psi(\xiv,t)$ can be interpreted as a defocus, and correspondingly, the functions $I(\xv,t)$ can be seen as successively defocused intensities. In applications, this type of measurement is typically encountered in biplane or multi-plane imaging, where in the simplest case of biplane imaging, only two measurements are made: an in-focus measurement corresponding to $I(\xv,0)$ and a defocus measurement $I(\xv,\tau)$ for some $\tau > 0$. Note that when using the difference quotient \eqref{difference_quotient}, these two measurements are sufficient to compute an approximation of $\ddt I(\xv,t) \vert_{t=0}$, which as we shall see below is enough to recover the phase $\vphi(\xv,t)$ and thus $u(\xiv)$ via the TIE. However, first we consider the general quadratic case \eqref{def_psi_defocus} and derive the following

\begin{proposition}
For $\psi(\xiv,t)$ as in \eqref{def_psi_defocus} the operator $\Pcat$ as in \eqref{def_Pca} takes the form
    \begin{equation}\label{def_Pck}
		\Pcat = \Pckt:= -i \sum\limits_{k=1}^N  c_k'(t) \frac{\partial^2}{\partial x_k^2} \,.
	\end{equation} 	
Furthermore, the functions $\Lcat(A,\vphi)(\xv,t)$ and $\Scat(A,\vphi)(\xv,t)$ defined in \eqref{eq_Pcat_chain} read
    \begin{equation}\label{eq_LSc_defocus}
    \begin{split}
        \Lcat(A,\vphi)(\xv,t) &=  \sum\limits_{k=1}^N  c_k'(t)\kl{2\ddxk \vphi(\xv,t) \ddxk A (\xv,t) + A(\xv,t) \dtdxkt \vphi(\xv,t) }
        \,,
        \\
        \Scat(A,\vphi)(\xv,t) &=  \sum\limits_{k=1}^N  c_k'(t) \kl{ \kl{\ddxk \vphi(\xv,t)}^2 A(\xv,t) -  \dtdxkt A(\xv,t) } \,,
    \end{split}
    \end{equation}
assuming that $A(\xv,t)$ and $ \vphi(\xv,t)$ are sufficiently smooth (e.g., $A(\cdot,t)\,, \vphi(\cdot,t) \in H^2(\RN)$).
\end{proposition}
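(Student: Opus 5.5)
The plan is to read off $\Pcat$ by matching \eqref{def_psi_defocus} with the general polynomial form \eqref{psi_poly}, and then to expand $\Pckt(A e^{i\vphi})$ by the product and chain rules, splitting the result into real and imaginary parts after factoring out $e^{i\vphi}$.

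\textbf{Identifying the operator.} Write \eqref{def_psi_defocus} in the form \eqref{psi_poly} with $M=2$. Let $e_k$ denote the $k$-th unit multiindex. The only nonzero coefficients are $c_{2e_k}(t) := c_k(t)$ for $k=1,\dots,N$, since $(2\pi)^{2e_k}\xiv^{2e_k} = (2\pi)^2\xi_k^2$. The term $\psi_0$ does not depend on $t$, so it drops out of $\psi'(t)$; this is consistent with the proof of Lemma~\ref{lem_polynomial_pseudo}, which only uses $\psi'(t)$. For $\alpha = 2e_k$ we have $\abs{\alpha}=2$, hence $(-i)^{\abs{\alpha}-1} = -i$, and $D^{\alpha} = \partial^2/\partial x_k^2$. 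Inserting this into \eqref{def_Pca} gives \eqref{def_Pck} directly.

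\textbf{Computing $\Lcat$ and $\Scat$.} Assume $A(\cdot,t),\vphi(\cdot,t)$ are smooth enough for the classical (or weak) product and chain rules. For each $k$, the product and chain rules give
\begin{equation*}
\frac{\partial^2}{\partial x_k^2}\kl{A e^{i\vphi}} = e^{i\vphi}\kl{\dtdxkt A + 2i\, \ddxk A\, \ddxk \vphi + i A\, \dtdxkt \vphi - A \kl{\ddxk\vphi}^2}.
\end{equation*}
Multiplying by $-i\,c_k'(t)$ and using $-i\cdot i = 1$ and $-i\cdot(-1)=i$, the $k$-th summand becomes
\begin{equation*}
e^{i\vphi} c_k'(t)\kl{2\ddxk A\,\ddxk\vphi + A\,\dtdxkt\vphi + i\kl{A(\ddxk\vphi)^2 - \dtdxkt A}}.
\end{equation*}
Summing over $k$ yields the form \eqref{eq_Pcat_chain}. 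Since $A$, $\vphi$ and $c_k'$ are real-valued, the bracketed real and imaginary parts are exactly the expressions in \eqref{eq_LSc_defocus}. They are uniquely determined as real-valued functions because $\abs{e^{i\vphi}}=1$, so one may multiply by $e^{-i\vphi}$ and take real and imaginary parts.

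\textbf{Main obstacle.} The algebra is routine. The only delicate point is justifying the product and chain rules at the stated regularity $A(\cdot,t),\vphi(\cdot,t)\in H^2(\RN)$. For $N\ge 4$ in particular, $H^2$ functions need not be bounded, and products such as $A(\ddxk\vphi)^2$ need not lie in $L^2$. I would handle this in the spirit of the paper's ``sufficiently smooth'' hypothesis. First derive the identity for smooth $A$ and $\vphi$. Then interpret it in the distributional or $L^1_{\mathrm{loc}}$ sense, or add the assumption that $A$, $\nabla\vphi$ and $\nabla A$ are locally bounded. The map $s\mapsto e^{is}$ is smooth and Lipschitz, so $e^{i\vphi}\in H^1_{\mathrm{loc}}$ with $\nabla e^{i\vphi} = i e^{i\vphi}\nabla\vphi$. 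Differentiating once more is then legitimate under that local boundedness assumption.
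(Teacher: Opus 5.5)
Your proposal is correct and follows essentially the same route as the paper. You read off $\Pckt$ from \eqref{def_Pca} using the multiindices $\alpha = 2e_k$, expand $\partial_{x_k}^2(Ae^{i\vphi})$ by the product and chain rules, multiply by $-i c_k'(t)$, and split into real and imaginary parts after factoring out $e^{i\vphi}$. Your extra remark on justifying the calculus at the $H^2(\RN)$ level (notably for $N \geq 4$) refines the paper's informal ``sufficiently smooth'' hypothesis, which the paper's proof does not address.
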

\begin{proof}
Note first that the form \eqref{def_Pck} follows directly from the definitions of $\Pcat$ and $\psi(\xiv,t)$. Next, in order to compute $\Lcat(A,\vphi)(\xv,t)$ and $\Scat(A,\vphi)(\xv,t)$, consider
    \begin{equation*}
		\Pckt \kl{A(\xv,t) e^{i \vphi(\xv,t)} }(\xv)  = -i \sum\limits_{k=1}^N  c_k'(t) \frac{\partial^2}{\partial x_k^2} \kl{A(\xv,t) e^{i \vphi(\xv,t)} }   \,.
	\end{equation*}
Since due to the chain rule, for all $1\leq k \leq N$ there holds
	\begin{equation*}
	\begin{split}
		& \frac{\partial^2}{\partial x_k^2} \kl{A(\xv,t) e^{i\vphi(\xv,t)} }
		=
		\frac{\partial}{\partial x_k} \kl{ A_{x_k}(\xv,t) e^{i\vphi(\xv,t)} + i \vphi_{x_k} (\xv,t) A(\xv,t) e^{i\vphi(\xv,t)} }
		\\
		& \qquad = 
		A_{x_k,x_k}(\xv,t) e^{i\vphi(\xv,t)} 
		+ i\vphi_{x_k}(\xv,t) A_{x_k}(\xv,t) e^{i\vphi(\xv,t)}
		+ i \vphi_{x_k,x_k}(\xv,t) A(\xv,t) e^{i\vphi(\xv,t)}  
		\\
		& \qquad \quad  
		+ i \vphi_{x_k}(\xv,t)  \kl{  A_{x_k}(\xv,t) e^{i\vphi(\xv,t)} +i\vphi_{x_k}(\xv,t) A(\xv,t) e^{i\vphi(\xv,t)} } \,,
	\end{split}
	\end{equation*}
where a subscript $x_k$ denotes a derivative with respect to $x_k$, we obtain that
    \begin{equation*}
    \begin{split}
        & \Pckt \kl{A(\xv,t) e^{i \vphi(\xv,t)} }(\xv) =  -i \sum\limits_{k=1}^N  c_k'(t) \frac{\partial^2}{\partial x_k^2} \kl{A(\xv,t) e^{i \vphi(\xv,t)} } 
        \\
        &\qquad =
        e^{i\vphi(\xv,t)} \sum\limits_{k=1}^N  c_k'(t)\kl{2\vphi_{x_k}(\xv,t) A_{x_k}(\xv,t) + \vphi_{x_k,x_k}(\xv,t) A(\xv,t) }
		\\
		& \qquad \qquad
		+ i e^{i\vphi(\xv,t)}  \sum\limits_{k=1}^N  c_k'(t) \kl{ \vphi_{x_k}(\xv,t)^2 A(\xv,t) -  A_{x_k,x_k}(\xv,t) }\,,
    \end{split}
    \end{equation*}
which, after rearranging the first sum into divergence form, noting that $A(\xv,t)$ and $\vphi(\xv,t)$ are real valued functions, and comparing with \eqref{def_Pca} now yields the assertion.
\end{proof}

With this, we can now derive two PDEs for the unknown phase $\vphi(\xv,t)$ in

\begin{theorem}\label{thm_TIE_general}
Let $u \in \LtRN$, $\psi_0 \in \LtRN$ be independent of $t$, $\psi$ be as in \eqref{def_psi_defocus} for some coefficients $c_k \in C^1([0,T])$, and let $u\psi'(t) \in \LtRN$ for each $t \in [0,T]$. Furthermore, let $I(\xv,t)$ be as in \eqref{def_Ixt}, $A(\xv,t)$ and $\vphi(\xv,t)$ defined via \eqref{polar_form_t} be sufficiently smooth, e.g., $A(\cdot,t) \,, \vphi(\cdot,t) \in H^2(\RN)$ for all $t \in [0,T]$. Then with the diagonal matrix 
    \begin{equation*}
        \Cb'(t):= \operatorname{diag}(2c_1'(t)\,,\dots\,,2c_N'(t)) : [0,T] \rightarrow  \R^{N\times N} \,,
    \end{equation*}
it follows that
    \begin{equation}\label{eq_TIE_generalized}
		\ddt I(\xv,t) =  \nabla_{\xv} \cdot \kl{ \Cb'(t) \, I(\xv,t) \, \nabla_{\xv} \vphi(\xv,t)} \,,	
	\end{equation}
and, if $A(\xv,t)$ and $\vphi(\xv,t)$ are also differentiable wrt.\ $t$ (and if $\sqrt{I(\xv,t)} \neq 0$), then
    \begin{equation}\label{eq_TPE_generalized}
		2 \ddt \vphi(\xv,t) 
		=  \norm{\Cb'(t)^\frac{1}{2} \nabla_{\xv} \vphi(\xv,t)}_{\RN}^2 -  \frac{1}{\sqrt{I(\xv,t)}} \nabla_{\xv} \cdot \kl{ \Cb'(t)^\frac{1}{2} \nabla_{\xv} \sqrt{I(\xv,t)}} \,.
	\end{equation}
\end{theorem}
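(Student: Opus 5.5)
The plan is to specialise Theorem~\ref{thm_main_polynomials} to the quadratic modulation \eqref{def_psi_defocus}. The hypotheses here match those of Lemma~\ref{lem_polynomial_pseudo} with $M=2$: the only nonzero coefficients are those with $\alpha = 2e_k$, and each such coefficient equals $c_k(t)$. So \eqref{eq_ddt_I_poly_diff_L} and \eqref{eq_ddt_phi_poly_diff_S} hold. By the preceding Proposition, $\Lcat$ and $\Scat$ are given explicitly by \eqref{eq_LSc_defocus}, and the $H^2$-smoothness assumed here is exactly what makes \eqref{eq_LSc_defocus} valid. What remains is to rewrite $2A\Lcat$ and $2A\Scat/I$ in divergence form.

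For \eqref{eq_TIE_generalized}, I will multiply out
\begin{equation*}
2A\Lcat = \sum_{k=1}^N 2c_k'(t)\bigl(2A\,\partial_{x_k}A\,\partial_{x_k}\vphi + A^2\,\partial^2_{x_k}\vphi\bigr).
\end{equation*}
Since $I = A^2$, we have $\partial_{x_k} I = 2A\,\partial_{x_k}A$, and the bracket is exactly $\partial_{x_k}\bigl(I\,\partial_{x_k}\vphi\bigr)$ by the product rule. The factor $2c_k'(t)$ does not depend on $\xv$, so it can be moved inside the derivative. Summing over $k$ and using that $\Cb'(t)$ is diagonal with entries $2c_k'(t)$ gives $\nabla_{\xv}\cdot(\Cb'(t)\,I\,\nabla_{\xv}\vphi)$, which is \eqref{eq_TIE_generalized}.

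For \eqref{eq_TPE_generalized}, I will start from $2I\,\ddt\vphi = 2A\Scat$ and divide by $A = \sqrt{I} \neq 0$, which gives $2\,\ddt\vphi = 2\Scat/A$. Inserting \eqref{eq_LSc_defocus} yields
\begin{equation*}
2\,\ddt\vphi = \sum_{k=1}^N 2c_k'(t)\,(\partial_{x_k}\vphi)^2 - \frac{1}{A}\sum_{k=1}^N 2c_k'(t)\,\partial^2_{x_k}A .
\end{equation*}
The first sum equals $\norm{\Cb'(t)^{1/2}\nabla_{\xv}\vphi}^2$, which implicitly requires $c_k'\ge 0$ so that the square root is real; otherwise the first sum should be read as $\nabla\vphi^T\Cb'\nabla\vphi$. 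The second sum is $A^{-1}\nabla_{\xv}\cdot(\Cb'\nabla_{\xv}A)$.

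The main obstacle is purely one of bookkeeping: reconciling this second term with the stated form, which carries $\Cb'(t)^{1/2}$ rather than $\Cb'(t)$. My computation produces the full weight $2c_k'$ in that term. Taking the stated result as intended, I would first re-check the factor in $\Scat$ through the chain-rule expansion in the Proposition. If the factor $2c_k'$ persists, I would present the identity with $\Cb'(t)$ in the Laplacian-type term and point out that it agrees with \eqref{eq_TPE_generalized} up to this normalisation of the weight matrix. This is the point where I expect the matching with the statement to need care.
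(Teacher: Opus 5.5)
Your proposal is correct and follows the paper's own proof: specialize Theorem~\ref{thm_main_polynomials} via \eqref{eq_LSc_defocus}, use $\partial_{x_k} I = 2A\,\partial_{x_k}A$ to reach the divergence form \eqref{eq_TIE_generalized}, and divide $2I\,\ddt\vphi = 2A\,\Scat$ by $I$ for the second identity. Your worry about the second term is justified and is not a bookkeeping error on your side: the paper's own computation ends with $\frac{1}{\sqrt{I}}\sum_k 2c_k'(t)\,\partial_{x_k}^2\sqrt{I} = \frac{1}{\sqrt{I}}\nabla_{\xv}\cdot\bigl(\Cb'(t)\nabla_{\xv}\sqrt{I}\bigr)$, so the exponent $\frac{1}{2}$ on $\Cb'(t)$ in that term of \eqref{eq_TPE_generalized} (and the resulting $\sqrt{c}$ in \eqref{eq_TPE_classic}) is a slip in the statement, and you should state the result with $\Cb'(t)$ there.
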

\begin{proof}
First, note that by combining \eqref{eq_ddt_I_poly_diff_L} and \eqref{eq_LSc_defocus} we obtain that
    \begin{equation*}
	\begin{split}
	    \ddt I(\xv,t) 
        &=  2 A(\xv,t) \sum\limits_{k=1}^N  c_k'(t)\kl{2\ddxk \vphi(\xv,t) \ddxk A (\xv,t) + A(\xv,t) \dtdxkt \vphi(\xv,t) } 
        \\
        &
        = \sum\limits_{k=1}^N  2 c_k'(t)\kl{\ddxk \vphi(\xv,t) \kl{ \ddxk A^2 (\xv,t)} + A(\xv,t)^2 \dtdxkt \vphi(\xv,t) } \,,
	\end{split}
	\end{equation*}
which together with $A(\xv,t)^2 = I(\xv,t)$ and the definition of the diagonal matrix $\Cb'(t)$ now yields \eqref{eq_TIE_generalized}. Similarly, combining \eqref{eq_ddt_phi_poly_diff_S} and \eqref{eq_LSc_defocus}, we obtain that
    \begin{equation*}
	\begin{split}
	    &2 I(\xv,t)\ddt \vphi(\xv,t) 
        =  2 A(\xv,t) \sum\limits_{k=1}^N  c_k'(t) \kl{ \kl{\ddxk \vphi(\xv,t)}^2 A(\xv,t) -  \dtdxkt A(\xv,t) } 
        \\
        &\qquad 
        = A(\xv,t)^2 \sum\limits_{k=1}^N  2 c_k'(t) \kl{\ddxk \vphi(\xv,t)}^2
        -
        A(\xv,t) \sum\limits_{k=1}^N  2 c_k'(t) \dtdxkt A(\xv,t)  \,,
	\end{split}
	\end{equation*}
which, together with $A(\xv,t) = \sqrt{I(\xv,t)}$, can be rewritten as
    \begin{equation*}
	\begin{split}
	    &2 \ddt \vphi(\xv,t) 
        = \sum\limits_{k=1}^N  2 c_k'(t) \kl{\ddxk \vphi(\xv,t)}^2
        -
        \frac{1}{\sqrt{I(\xv,t)}} \sum\limits_{k=1}^N  2 c_k'(t) \dtdxkt \sqrt{I(\xv,t)}  \,.
	\end{split}
	\end{equation*}
Together with the definition of $\Cb'(t)$ this yields \eqref{eq_TIE_generalized}, completing the proof.
\end{proof}

\begin{remark}
Note that if $c_k(t) = ct/2$ for all $k$, then \eqref{eq_TIE_generalized} and \eqref{eq_TPE_generalized} simplify to  
    \begin{equation}\label{eq_TIE_classic}
		\frac{1}{c}\ddt I(\xv,t) =  \nabla_{\xv} \cdot \kl{ I(\xv,t) \, \nabla_{\xv} \vphi(\xv,t)} \,,
	\end{equation}
and 
    \begin{equation}\label{eq_TPE_classic}
		2 \ddt \vphi(\xv,t) 
		=  c \norm{\nabla_{\xv} \vphi(\xv,t)}_{\RN}^2 -  \frac{\sqrt{c}}{\sqrt{I(\xv,t)}} \Delta_{\xv} \sqrt{I(\xv,t)} \,,
	\end{equation}
respectively. The second equation, i.e., the ODE \eqref{eq_TPE_classic}, is known as the transport of phase equation (TPE), and can be used to compute the evolution of the phase $\vphi(\xv,t)$ over time $t$ from the corresponding intensity $I(\xv,t)$, given the initial phase $\vphi(\xv,0)$ and suitable boundary conditions in space. The initial phase $\vphi(\xv,0)$, or in fact any $\vphi(\xv,t)$, can in turn be computed from the differential measurements $\ddt I(\xv,t)$ using \eqref{eq_TIE_classic}. This PDE is a variant of the transport of intensity equation (TIE) known from physics, i.e.,
    \begin{equation}\label{TIE_physics}
        - \frac {2\pi }{\lambda } \frac {\partial }{\partial z}\mathcal{I}(x,y,z)= \nabla_{x,y} \cdot \kl{ \mathcal{I}(x,y,z)\nabla _{x,y}\Phi(x,y,z) } \,,
    \end{equation}
which connects the intensity $\mathcal{I}(x,y,z)$ and the phase $\Phi(x,y,z)$ of a 3D wave (field) with wavelength $\lambda$ propagating in $z$-direction. Note that since $\psi(\xiv,t) = 2\pi^2 c\abs{ \xiv}^2$ can physically be interpreted as a defocus, and thus the functions $I(\xv,t)$ as successively defocused intensity measurements, the time derivative $\ddt I(\xv,t)$ in the generalized TIE \eqref{eq_TIE_generalized} and the $z$-derivative $\frac {\partial }{\partial z}\mathcal{I}(x,y,z)$ in the classic TIE \eqref{eq_TIE_classic} physically coincide here.
\end{remark}

For a fixed time $t$, the (generalized) TIE \eqref{eq_TIE_generalized} can be understood as an elliptic PDE for $\vphi(\xv,t)$. E.g., if $\Omega \subset \R^2$ is a bounded Lipschitz domain, then one may consider
    \begin{equation}\label{eq_TIE_elliptic}
    \begin{split}
        \nabla_{\xv} \cdot \kl{ \Cb'(t) \, I(\xv,t) \, \nabla_{\xv} \vphi(\xv,t)} &= \ddt I(\xv,t)\,, 
        \qquad \forall \, \xv \in \Omega \,,
        \\
        \vphi(\cdot,t)\vert_{\partial \Omega} &= g_D \,,
        \qquad \quad \qquad \text{on} \,\, \partial \Omega \,,
    \end{split}
	\end{equation}
for some known Dirichlet boundary values $g_D(\xv,t)$. If $\Cb'(t) I(\cdot,t) \in L^\infty(\Omega,\R^{N\times N})$ is positive definite, $\ddt I(\xv,t) \in H^{-1}(\Omega)$, and $g_D(\cdot,t) \in H^{\frac{1}{2}}(\partial \Omega)$, then the Lax-Milgram Lemma guarantees the existence of a unique weak solution $\vphi(\cdot,t) \in H^1(\Omega)$ of \eqref{eq_TIE_elliptic}. Using standard regularity results, one may even obtain $\vphi(\cdot,t) \in H^2(\Omega)$, and in certain cases, $\vphi(\cdot,t)$ can be uniquely extended to a solution $\vphi(\cdot,t) \in H^2(\R^2)$ of \eqref{eq_TIE_generalized}. This approach has been successfully used for phase retrieval in practice \cite{Zuo_2020_TIE_Tutorial}, where \eqref{eq_TIE_elliptic} is for example solved via a finite-element approach. However, the choice of the boundary data $g_D$, which in the absence of more accurate information is typically (but inaccurately) set to $g_D(\xv,t) = 0$, remains a delicate topic; see \cite{Kirisits_Raik_Scherzer_Strohmenger_Yan_2025,Zuo_2020_TIE_Tutorial} for a corresponding~discussion.

\begin{example}
Consider the 1D case $\xv = x$ with $\psi(x,t) = \psi_0(x) + c(t)(2\pi)^2 \xi^2$ for some coefficient $c(t) \in C^1([0,T])$. Then the generalized TIE \eqref{eq_TIE_generalized} takes the simple form
    \begin{equation}\label{helper_TIE_1D}
		\ddt I(x,t) =  \ddx\kl{ 2c'(t) I(x,t)  \ddx \vphi(x,t)} \,.
	\end{equation}
Now if $\Omega = [a,b]$, $c'(t)I(x,t) \neq 0$, and all of the integrals below exist, the solution of \eqref{helper_TIE_1D} with the boundary conditions $\vphi(a,t) = g_1(t)$ and $\vphi(b,t) = g_2(t)$ is given by
    \begin{equation*}
        \vphi(x,t) = g_1(t) 
        +
        \frac{1}{2c'(t) } \kl{
        \int_a^x \frac{1}{I(r,t)} \kl{\int_a^r \ddt I(s,t) \,ds} \, dr
        + C(t) \int_a^x \frac{1}{I(r,t)} \,dr } \,,
    \end{equation*}
where
    \begin{equation*}
        C(t) =
        \kl{\int_a^b \frac{1}{2c'(t) I(r,t)}\,dr }^{-1} \kl{
        g_2(t) - g_1(t) - \int_a^b \frac{1}{2c'(t) I(r,t)} \kl{\int_a^r \ddt I(s,t) \,ds } \, dr } \,.
    \end{equation*}
Alternatively, one can solve \eqref{helper_TIE_1D} by using the property \eqref{Fourier_diff}, which (formally) yields 
    \begin{equation}\label{TIE_FM}
        \vphi(x,t) = \FI \kl{ \frac{1}{2 \pi i \xi} \F\kl{ \frac{1}{2c'(t) I(\cdot,t)} \FI \kl{ \frac{1}{2 \pi i \xi}\F \kl{ \ddt I(\cdot,t) } } } } \,.
    \end{equation}
This approach will be useful when considering Fourier-type wavefront sensors below.
\end{example}

% % % % % % % % % % % % % % % % % % %
% Subsection - General Modulations  %
% % % % % % % % % % % % % % % % % % %
\subsection{General Modulations}

Next, we consider general modulation functions $\psi(\xiv,t)$ not necessarily polynomial. Note first that if $\psi(\xiv,t)$ is sufficiently smooth, Taylor approximation wrt.\ $\xiv$ around $0$ yields
    \begin{equation*}
        \psi(\xiv,t) 
        = 
        \sum_{\alpha} d_\alpha(t) \xiv^\alpha \,,
        \qquad
        \text{where}
        \qquad
        d_\alpha(t) := \frac{1}{\alpha!} D_{\xiv}^\alpha \psi(\xiv,t) \big\vert_{\xiv = 0} \,,
    \end{equation*}
and thus there holds
    \begin{equation*}
        \psi(\xiv,t) 
        \approx 
        \psi_M(\xiv,t) :=
        \sum_{\abs{\alpha} \leq M} d_\alpha(t) \xiv^\alpha \,.
    \end{equation*}
Since $\psi_M(\xiv,t)$ is a polynomial of the form \eqref{psi_poly}, our previous results are applicable, leading to approximate PDEs for the unknown phase $\vphi(\xv,t)$. While this approach is certainly possible, its main drawback is that a very high order $M$ may be required to obtain a reasonably accurate approximation of the original modulation $\psi(\xiv,t)$. Hence, the resulting PDEs will also be of high order, and thus difficult to solve in practice. 

Hence, we now pursue an alternative approach, making use of the following

\begin{assumption}\label{ass_G}
The modulation function $\psi(\xiv,t)$ is such that the linear operator
    \begin{equation}\label{def_G}
    \begin{split}
        \G_\psi : X \subset \LtRN \to \LtRN \,,
        \qquad v
        \mapsto
        \G_\psi(v) := \FI\kl{ \psi'(t) \F(v)  }
    \end{split}
    \end{equation}
is well-defined and bounded for all $t \in [0,T]$. Here, $X$ is a closed subspace of $\LtRN$.
\end{assumption}

\begin{remark}
Note that $\G_\Psi$ can be understood as a pseudo-differential operator with symbol $p(\xv,\xiv) = \ddt \psi (\xiv, t)$. In particular, for $\psi(\xiv,t)$ as in \eqref{psi_poly}, $\G_\Psi$ is a differential operator of order at most $M$, and thus Assumption~\ref{ass_G} holds, e.g., with $X = H^M(\RN)$.
\end{remark}

Next, we use the operator $\G_\psi$ defined above to write
    \begin{equation*}
         \FI(e^{i\psi(t)} u i \psi'(t) )(\xv)
         =
         i \G_\psi \kl{\FI(e^{i\psi(t)} u  )}(\xv)
         =
         i \G_\psi \kl{A(\cdot,t)e^{i\vphi(\cdot,t)}}(\xv)
         \,.
    \end{equation*}
where we have used the polar form \eqref{polar_form_t}. With this we obtain the following result:

\begin{theorem}\label{thm_helper_general}
Let $I(\xv,t)$ be defined as in \eqref{def_Ixt}, $A(\xv,t)$ and $\vphi(\xv,t)$ as in \eqref{polar_form_t}, let Assumption~\ref{ass_main} and \ref{ass_G} hold, and let $\FI(e^{i\psi(t)} u  ) \in X$ for all $t \in [0,T]$. Then $I(\xv,t)$ is continuously differentiable wrt.\ $t$, i.e., $I(\xv,t) \in C^1([0,T],L^1(\RN))$, and there holds
	\begin{equation}\label{eq_ddt_I_general_G}
		\ddt I(\xv,t) = 2 \Re\kl{i A(\xv,t)e^{-i\vphi(\xv,t)} \G_\psi\kl{A(\cdot,t)e^{i\vphi(\cdot,t)}}(\xv)} \,.
	\end{equation}
Furthermore, if $A(\xv,t)$ and $\vphi(\xv,t)$ are also differentiable wrt.\ $t$, then
	\begin{equation}\label{eq_ddt_phi_general_G}
		2 I(\xv,t) \ddt \vphi(\xv,t) = \Im\kl{i A(\xv,t)e^{-i\vphi(\xv,t)} \G_\psi\kl{A(\cdot,t)e^{i\vphi(\cdot,t)}}(\xv) } \,.
	\end{equation}
\end{theorem}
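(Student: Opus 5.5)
The plan is to reduce Theorem~\ref{thm_helper_general} to Theorem~\ref{thm_ddt_I_general}: rewrite the right-hand sides of \eqref{eq_ddt_I_general} and \eqref{eq_ddt_phi_general} in terms of $\G_\psi$. To use Theorem~\ref{thm_ddt_I_general}, I first have to check its extra hypothesis $u\psi'(t) \in \LtRN$ for each $t\in[0,T]$, which is the one thing not directly assumed here.

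For this step I set $v := \FI(e^{i\psi(t)}u) \in X$, which holds by assumption. Then $\F(v) = e^{i\psi(t)}u$, so $\G_\psi(v) = \FI(\psi'(t) e^{i\psi(t)} u)$. Assumption~\ref{ass_G} says $\G_\psi$ maps $X$ into $\LtRN$, so $\G_\psi(v)\in\LtRN$. By Plancherel, $\psi'(t)e^{i\psi(t)}u \in \LtRN$. Since $\psi$ is real-valued, $\abs{e^{i\psi(t)}}=1$, and hence $u\psi'(t)\in\LtRN$. This verifies the hypothesis of Theorem~\ref{thm_ddt_I_general}, which then gives $I \in C^1([0,T],L^1(\RN))$ together with formulas \eqref{eq_ddt_I_general} and \eqref{eq_ddt_phi_general}.

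Next I rewrite the right-hand sides. By linearity of $\FI$,
\begin{equation*}
\FI(e^{i\psi(t)}u\, i\psi'(t)) = i\,\FI(\psi'(t)\F(v)) = i\,\G_\psi(v) = i\,\G_\psi\kl{A(\cdot,t)e^{i\vphi(\cdot,t)}} .
\end{equation*}
The last equality uses the polar form \eqref{polar_form_t}. The same polar form gives $\overline{\FI(e^{i\psi(t)}u)} = A(\xv,t)e^{-i\vphi(\xv,t)}$, because $A$ and $\vphi$ are real. Substituting both identities into \eqref{eq_ddt_I_general} yields \eqref{eq_ddt_I_general_G}. When $A$ and $\vphi$ are differentiable in $t$, substituting them into \eqref{eq_ddt_phi_general} yields \eqref{eq_ddt_phi_general_G}.

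The only real obstacle is the first step, because Theorem~\ref{thm_ddt_I_general} requires $u\psi'(t)\in\LtRN$ and this is not stated among the present hypotheses. The combination $\FI(e^{i\psi(t)}u)\in X$ with boundedness of $\G_\psi$ on $X$ supplies exactly this condition. The remaining steps are algebraic substitutions.
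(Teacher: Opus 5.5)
Your reduction to Theorem~\ref{thm_ddt_I_general} is essentially the paper's route; its proof of this theorem only says the result follows as in the proof of Theorem~\ref{thm_ddt_I_general}. Your check that $u\psi'(t)\in\LtRN$ holds is a detail the paper leaves implicit: you combine $\FI(e^{i\psi(t)}u)\in X$, the mapping property $\G_\psi(X)\subset\LtRN$, Plancherel, and $\abs{e^{i\psi}}=1$. The identity $\ddt w = i\,\G_\psi(w)$ for $w:=\FI(e^{i\psi(t)}u)$ is fine, and so is the first formula \eqref{eq_ddt_I_general_G}. The second formula is where the proof breaks. You obtain \eqref{eq_ddt_phi_general_G} by quoting the statement \eqref{eq_ddt_phi_general} instead of redoing its computation. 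That statement, and therefore the one you are proving, is off by a factor of $2$, so your substitution reproduces a false identity.

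With $w=Ae^{i\vphi}$ and $A,\vphi$ real, one has $\overline{w}\,\ddt w = A\,\ddt A + iA^2\,\ddt\vphi$, hence $\Im(\overline{w}\,\ddt w)=I\,\ddt\vphi$. What actually holds is therefore
\begin{equation*}
I(\xv,t)\,\ddt\vphi(\xv,t) = \Im\kl{i A(\xv,t)e^{-i\vphi(\xv,t)}\,\G_\psi\kl{A(\cdot,t)e^{i\vphi(\cdot,t)}}(\xv)} \,,
\end{equation*}
so the right-hand side of \eqref{eq_ddt_phi_general_G} is missing a factor $2$.

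The trivial modulation $\psi(\xiv,t)=ct$ of Example~\ref{ex_not_useful} confirms this. There $w(\cdot,t)=e^{ict}w(\cdot,0)$, so $\ddt\vphi=c$, and the right-hand side equals $\Im(icI)=cI=I\,\ddt\vphi$. The stated version would instead force $\ddt\vphi=c/2$.

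The paper is consistent with the corrected constant elsewhere. Its proof of Theorem~\ref{thm_ddt_I_general} actually ends with $2\Im(\overline{w}\,\ddt w)=2I\,\ddt\vphi$. Theorems~\ref{thm_main_polynomials} and~\ref{thm_main_general} state $2I\,\ddt\vphi=2A\,\Scat$ and $2I\,\ddt\vphi=2A\,\S_\psi$. These follow from the corrected identity, since $\Im(A(\L+i\S))=A\S$, but not from \eqref{eq_ddt_phi_general_G} as written. The fix is to re-derive the phase identity directly from the polar form rather than cite it; with the corrected constant your argument goes through verbatim.

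A lesser point concerns the $C^1([0,T],L^1(\RN))$ claim you import. It really needs a bound uniform in $t$, e.g.\ $\sup_{s\in[0,T]}\abs{\psi'(\xiv,s)u(\xiv)}$ square integrable, which is automatic for $\psi=t\Psi$. Knowing $u\psi'(t)\in\LtRN$ for each fixed $t$ does not give continuity of $t\mapsto\psi'(t)u$ in $\LtRN$. The paper is equally silent on this.
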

\begin{proof}
This follows analogously as in the proof of Theorem~\ref{thm_ddt_I_general}.
\end{proof}

Before proceeding further, we first consider a useful special case in the following

\begin{example}\label{ex_fracLapl_01}
Let $\psi(\xiv,t) := t \abs{2\pi \xiv}^{2s}$. Then $\psi'(t) = \abs{2\pi \xiv}^{2s}$ and thus we have
    \begin{equation*}
        \G_\psi(v) = \FI\kl{  \abs{2\pi \xiv}^{2s} \F(v)  }
        \overset{\eqref{def_fracLapl}}{=}
        (-\Delta)^{s} v \,.
    \end{equation*}
Hence, if $\FI(e^{i\psi(t)} u  ) \in D\kl{(-\Delta)^{s}} \subset H^{2s}(\RN)$, then Theorem~\ref{thm_helper_general} yields
    \begin{equation*}
		\ddt I(\xv,t) = 2 \Re\kl{i A(\xv,t)e^{-i\vphi(\xv,t)} (-\Delta)^{s} \kl{A(\cdot,t)e^{i\vphi(\cdot,t)}}(\xv)} \,.
	\end{equation*}
and
    \begin{equation*}
		2 I(\xv,t) \ddt \vphi(\xv,t) = \Im\kl{i A(\xv,t)e^{-i\vphi(\xv,t)} (-\Delta)^{s}\kl{A(\cdot,t)e^{i\vphi(\cdot,t)}}(\xv) } \,.
	\end{equation*}
For $s \in \N$, this can be seen to coincide with our results from the polynomial case. However, for $s \in \R \setminus \N$, $(-\Delta)^s$ is a non-local operator, and does not satisfy the classic chain or product rules, which were the basis for our further analysis based on \eqref{eq_Pcat_chain}.
\end{example}

In order to continue with our analysis, we make the following assumption on $\G_\psi$:

\begin{assumption}\label{ass_general}
For some $Z_1,Z_2 \subseteq \LtRN$, there exist real-valued operators 
    \begin{equation*}
        \L_\psi : Z_1 \times Z_2 \to  \LtRN \,,
        \qquad \text{and} \qquad
        \S_\psi : Z_1 \times Z_2 \to  \LtRN \,,
    \end{equation*}
such that for all $A \in Z_1$ and $\vphi \in Z_2$ there holds
    \begin{equation}\label{eq_ass_L_S}
    \begin{split}
		&i \G_\psi \kl{Ae^{i\vphi}}(\xv)
        = e^{i \vphi(\xv)} \kl{ \L_\psi(A,\vphi)(\xv) + i \S_\psi(A,\vphi)(\xv) } \,.
    \end{split}
	\end{equation}
\end{assumption}

With this, we now obtain an analogon to Theorem~\ref{thm_main_polynomials} for general modulations.

\begin{theorem}\label{thm_main_general}
Let Assumption~\ref{ass_general} and the assumptions of Theorem~\ref{thm_helper_general} hold. Then if $A(\cdot,t) \in Z_1$ and $\vphi( \cdot,t) \in Z_2$ for all $t \in [0,T]$, there holds
    \begin{equation}\label{eq_ddt_I_general_G_LS}
		\ddt I(\xv,t) =  2 A(\xv,t) \L_\psi(A(\cdot,t),\vphi(\cdot,t))(\xv) \,,
	\end{equation}
and furthermore, if $A(\xv,t)$ and $\vphi(\xv,t)$ are also differentiable wrt.\ $t$, then
    \begin{equation}\label{eq_ddt_phi_general_G_LS}
		2 I(\xv,t)\ddt \vphi(\xv,t)  =   2 A(\xv,t) \S_\psi(A(\cdot,t),\vphi(\cdot,t))(\xv) \,.
	\end{equation}
\end{theorem}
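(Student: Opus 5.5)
The plan is to mirror the proof of Theorem~\ref{thm_main_polynomials}, replacing the chain-rule decomposition \eqref{eq_Pcat_chain} by the abstract decomposition \eqref{eq_ass_L_S} of Assumption~\ref{ass_general}. The starting point is Theorem~\ref{thm_helper_general}. Its assumptions hold, so $I(\xv,t)$ is continuously differentiable in $t$, and the identities \eqref{eq_ddt_I_general_G} and \eqref{eq_ddt_phi_general_G} are available. Both involve the same quantity
\begin{equation*}
    i A(\xv,t)e^{-i\vphi(\xv,t)} \G_\psi\kl{A(\cdot,t)e^{i\vphi(\cdot,t)}}(\xv) \,.
\end{equation*}

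First, fix $t \in [0,T]$. Since $A(\cdot,t) \in Z_1$ and $\vphi(\cdot,t) \in Z_2$, we may apply \eqref{eq_ass_L_S} with $A = A(\cdot,t)$ and $\vphi = \vphi(\cdot,t)$. Multiplying by $A(\xv,t)e^{-i\vphi(\xv,t)}$, the phase factors cancel, and we obtain
\begin{equation*}
    i A(\xv,t)e^{-i\vphi(\xv,t)} \G_\psi\kl{A(\cdot,t)e^{i\vphi(\cdot,t)}}(\xv)
    = A(\xv,t)\kl{ \L_\psi(A(\cdot,t),\vphi(\cdot,t))(\xv) + i\, \S_\psi(A(\cdot,t),\vphi(\cdot,t))(\xv) } \,.
\end{equation*}

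Second, we insert this into \eqref{eq_ddt_I_general_G} and \eqref{eq_ddt_phi_general_G}. By Assumption~\ref{ass_main}, $A$ is real-valued, and by Assumption~\ref{ass_general}, $\L_\psi$ and $\S_\psi$ are real-valued operators. Hence the real part of the bracket is $A\,\L_\psi(A,\vphi)$ and the imaginary part is $A\,\S_\psi(A,\vphi)$. Taking real parts in \eqref{eq_ddt_I_general_G} gives \eqref{eq_ddt_I_general_G_LS} directly. Taking imaginary parts in \eqref{eq_ddt_phi_general_G} gives the left-hand side $2I\,\ddt\vphi$ equal to $A\,\S_\psi(A,\vphi)$.

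The identities hold pointwise almost everywhere, as equalities of $L^1$ (respectively $L^2$) functions for each $t$. This is all the statement needs.

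The only real obstacle is a constant factor. The computation above yields $2 I \ddt \vphi = A\,\S_\psi$, whereas \eqref{eq_ddt_phi_general_G_LS} is written with $2A\,\S_\psi$. The same discrepancy already appears between \eqref{eq_ddt_phi_general} and \eqref{eq_ddt_phi_poly_diff_S}. The proof of Theorem~\ref{thm_ddt_I_general} actually shows $2 I \ddt \vphi = 2\Im(\overline{w}\,\ddt w)$, i.e.\ $I\ddt\vphi = \Im(\overline{w}\,\ddt w)$.

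I would therefore re-derive the phase identity directly from $w = A e^{i\vphi}$ and $\ddt w = i\G_\psi(w)$, using $\Im(\overline{w}\ddt w) = I \ddt\vphi$. This gives $I\ddt\vphi = A\,\S_\psi$. I would then state the result consistently with the normalisation actually used in the paper's earlier results, and note the factor explicitly. Apart from this bookkeeping, the proof is a direct substitution.
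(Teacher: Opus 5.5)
Your proposal is correct and follows the paper's route. You insert \eqref{eq_ass_L_S} into \eqref{eq_ddt_I_general_G} and \eqref{eq_ddt_phi_general_G}, then take real and imaginary parts, using that $A$, $\L_\psi$ and $\S_\psi$ are real-valued.

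Your factor check is actually more careful than the paper's one-line proof. \eqref{eq_ddt_phi_general}, and hence \eqref{eq_ddt_phi_general_G}, is misprinted by a factor $2$: the proof of Theorem~\ref{thm_ddt_I_general} really gives $I\ddt\vphi=\Im(\overline{w}\,\ddt w)$. Your re-derived identity $I\ddt\vphi = A\,\S_\psi(A,\vphi)$ is, after multiplying by $2$, exactly \eqref{eq_ddt_phi_general_G_LS}. So the theorem holds as stated and you should not renormalise it. The correction belongs in the earlier equations \eqref{eq_ddt_phi_general} and \eqref{eq_ddt_phi_general_G}, not in this statement.
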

\begin{proof}
This directly follows by combining \eqref{eq_ass_L_S} with \eqref{eq_ddt_I_general_G} and \eqref{eq_ddt_phi_general_G}, respectively. 
\end{proof}

Assumption~\ref{ass_general} clearly generalizes \eqref{eq_Pcat_chain} from the polynomial case. Note that in contrast to \eqref{eq_Pcat_chain}, $\L_\psi$ and $\S_\psi$ do not have to be differential operators, nor do they have to be linear. Furthermore, if $(\G_\psi v)(\xv)$ is either purely real- or complex-valued for real-valued $v(\xv) \in X$, then condition \eqref{eq_ass_L_S} is always satisfied, as we now show in 

\begin{proposition}\label{prop_G_split}
Assume that $\G_\psi$ defined in \eqref{def_G} is such that $\G_\psi v$ is real-valued for all real-valued $v \in X$. Furthermore, assume that $Z_1,Z_2 \subseteq \LtRN$ are such that 
    \begin{equation}\label{cond_Z1_Z2}
        A \cos(\vphi) \in X 
        \,, \quad \text{and} \quad
        A \sin(\vphi) \in X \,,
        \qquad 
        \forall 
        A \in Z_1 \,, \vphi \in Z_2 \,.
    \end{equation}
Then $\G_\psi$ satisfies Assumption~\ref{ass_G}, and in particular condition \eqref{eq_ass_L_S}, with
    \begin{equation*}
    \begin{split}
        \L_\psi(A,\vphi)
        &=
        \sin\kl{\vphi}
        \G_\psi \kl{A \cos\kl{\vphi}}
        -
        \cos\kl{\vphi}\G_\psi \kl{A \sin\kl{\vphi}} \,,
        \\
        \S_\psi(A,\vphi) 
        &=
        \cos\kl{\vphi} 
        \G_\psi  \kl{A \cos\kl{\vphi}}
        +
        \sin\kl{\vphi}\G_\psi  \kl{A \sin\kl{\vphi}} \,.
    \end{split}
    \end{equation*}
Furthermore, if instead $i \G_\psi v$ is real-valued for all real-valued $v\in X$ and \eqref{cond_Z1_Z2} holds, then $\G_\psi$ satisfies condition \eqref{eq_ass_L_S} with swapped right-hand sides in the above equations.
\end{proposition}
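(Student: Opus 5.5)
The plan is to verify \eqref{eq_ass_L_S} by a direct computation. I would split $Ae^{i\vphi}$ into real and imaginary parts, use the linearity of $\G_\psi$, and then multiply by $e^{-i\vphi}$ to read off the real and imaginary parts.

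\textbf{Step 1 (well-definedness).} Fix $A \in Z_1$ and $\vphi \in Z_2$. By \eqref{cond_Z1_Z2}, both $A\cos(\vphi)$ and $A\sin(\vphi)$ lie in the closed subspace $X$. Hence $Ae^{i\vphi} = A\cos(\vphi) + iA\sin(\vphi) \in X$, and by linearity of $\G_\psi$ from Assumption~\ref{ass_G},
\begin{equation*}
    \G_\psi\kl{Ae^{i\vphi}} = a + i b \,,
    \qquad
    a := \G_\psi\kl{A\cos(\vphi)} \,,
    \quad
    b := \G_\psi\kl{A\sin(\vphi)} \,.
\end{equation*}
By the hypothesis of the proposition, $a$ and $b$ are real-valued, since $A\cos(\vphi)$ and $A\sin(\vphi)$ are real-valued elements of $X$. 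They also lie in $\LtRN$. Since $\sin(\vphi)$ and $\cos(\vphi)$ are bounded by $1$, the proposed $\L_\psi(A,\vphi)$ and $\S_\psi(A,\vphi)$ are real-valued elements of $\LtRN$. So these operators are well-defined as maps $Z_1\times Z_2 \to \LtRN$.

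\textbf{Step 2 (algebra).} Compute $i\G_\psi(Ae^{i\vphi}) = -b + ia$. Multiplying by $e^{-i\vphi} = \cos(\vphi) - i\sin(\vphi)$ gives
\begin{equation*}
    e^{-i\vphi}\, i\G_\psi\kl{Ae^{i\vphi}} = \kl{a\sin(\vphi) - b\cos(\vphi)} + i\kl{a\cos(\vphi) + b\sin(\vphi)} \,.
\end{equation*}
Both brackets are real. Comparing with \eqref{eq_ass_L_S} identifies exactly the stated $\L_\psi$ and $\S_\psi$.

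\textbf{Step 3 (second case).} If instead $i\G_\psi v$ is real for real $v \in X$, set $p := i\G_\psi(A\cos(\vphi))$ and $q := i\G_\psi(A\sin(\vphi))$, both real. Then $i\G_\psi(Ae^{i\vphi}) = p + iq$, and multiplying by $e^{-i\vphi}$ gives
\begin{equation*}
    \kl{p\cos(\vphi) + q\sin(\vphi)} + i\kl{q\cos(\vphi) - p\sin(\vphi)} \,.
\end{equation*}
The real part now has the structure of the former $\S_\psi$, and the imaginary part has the structure of the former $\L_\psi$ (with $\G_\psi$ replaced by the real-valued $i\G_\psi$). This is the claimed swapping of right-hand sides.

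There is no real obstacle; the argument is pure algebra. The only points needing care are the following:
\begin{itemize}
    \item the precise meaning of \emph{swapped}: the imaginary part comes out as the negative of the former $\L_\psi$ expression, and I would state this sign explicitly;
    \item the bookkeeping that $a$, $b$ (respectively $p$, $q$) are real and in $\LtRN$, which is guaranteed by the closedness of $X$ and the boundedness of $\G_\psi$.
\end{itemize}
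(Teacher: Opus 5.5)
Your proof is correct and follows the paper's argument. Like the paper, you split $Ae^{i\vphi}$ by Euler's formula, use the linearity of $\G_\psi$, multiply by $e^{-i\vphi}=\cos(\vphi)-i\sin(\vphi)$ (the paper inserts $1=e^{i\vphi}(\cos(\vphi)-i\sin(\vphi))$, which amounts to the same thing), and read off real and imaginary parts. Your caution about the sign in the odd case is justified: the imaginary part is $q\cos(\vphi)-p\sin(\vphi)$, so the paper's explicit choice $\S_\psi=\sin(\vphi)\,i\G_\psi(A\cos(\vphi))-\cos(\vphi)\,i\G_\psi(A\sin(\vphi))$ has the wrong sign, and ``swapped'' is only correct if $\G_\psi$ is replaced by $i\G_\psi$ and that sign is corrected.
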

\begin{proof}
Let $A \in Z_1$, $\vphi \in Z_2$ be arbitrary but fixed. Due to the Euler formula, we have
    \begin{equation*}
    \begin{split}
		&i\G_\psi \kl{A e^{i\vphi}}(\xv) 
        = 
        i \G_\psi \kl{A \cos\kl{\vphi}}(\xv)
        -
        \G_\psi \kl{A \sin\kl{\vphi}}(\xv)
        \,,
    \end{split}
	\end{equation*}
where we have used the linearity of $\G_\psi$. Together with
    \begin{equation*}
        1 = e^{i \vphi} \kl{\cos\kl{\vphi} - i \sin\kl{\vphi}} \,,
    \end{equation*}
we thus find that
    \begin{equation*}
    \begin{split}
		i\G_\psi \kl{Ae^{i\vphi}}(\xv) 
        =& 
        e^{i \vphi} \kl{\cos\kl{\vphi} - i \sin\kl{\vphi}}
        i\G_\psi \kl{A \cos\kl{\vphi}}(\xv) 
        \\
        & 
        -
        e^{i \vphi} \kl{\cos\kl{\vphi} - i \sin\kl{\vphi}}\G_\psi \kl{A \sin\kl{\vphi}}(\xv)
        \,.
    \end{split}
	\end{equation*}
Hence, if $\G_\psi v$ is real-valued for real-valued $v \in X$, it follows that \eqref{eq_ass_L_S} is satisfied with
    \begin{equation*}
    \begin{split}
        \L_\psi(A,\vphi)
        &:=
        \sin\kl{\vphi}
        \G_\psi \kl{A \cos\kl{\vphi}}
        -
        \cos\kl{\vphi}\G_\psi \kl{A \sin\kl{\vphi}} \,,
        \\
        \S_\psi(A,\vphi) 
        &:=
        \cos\kl{\vphi} 
        \G_\psi \kl{A \cos\kl{\vphi}}
        +
        \sin\kl{\vphi}\G_\psi \kl{A \sin\kl{\vphi}} \,,
    \end{split}
    \end{equation*}
which yields the first part of the assertion. Now if $\G_\psi v)$ is purely complex-valued for real-valued $v \in X$, then $i \G_\psi v$ is real-valued, and it follows that \eqref{eq_ass_L_S} is satisfied with
    \begin{equation*}
    \begin{split}
        \L_\psi(A,\vphi)
        &:=
        \cos\kl{\vphi}
        i\G_\psi \kl{A \cos\kl{\vphi}}
        +
        \sin\kl{\vphi}i\G_\psi \kl{A \sin\kl{\vphi}} \,,
        \\
        \S_\psi(A,\vphi) 
        &:=
        \sin\kl{\vphi}
        i\G_\psi \kl{A \cos\kl{\vphi}}
        -
        \cos\kl{\vphi}i\G_\psi \kl{A \sin\kl{\vphi}} \,,
    \end{split}
    \end{equation*}
which yields the second part of the assertion and thus completes the proof. 
\end{proof}

Note that from the definition \eqref{def_G} of $\G_\psi$, it follows that for all real-valued $v \in X$,
    \begin{equation}\label{obs_real}
    \begin{split}
        \G_\psi v \,\, \text{is real-valued} 
        &\quad \Longleftrightarrow \quad
        \psi(-\xiv,t) = \psi(\xiv,t) \,, \quad \forall \, \xiv \in \RN \,, t \in [0,T] \,,
        \\
        i\G_\psi v \,\, \text{is real-valued} 
        &\quad \Longleftrightarrow \quad
        \psi(-\xiv,t) = -\psi(\xiv,t) \,, \quad \forall \, \xiv \in \RN \,, t \in [0,T] \,,
    \end{split}
    \end{equation}
i.e., if $\psi(\cdot,t)$ is odd or even, respectively. Hence, combining the above results, we obtain

\begin{theorem}\label{thm_main_wave}
Let the assumptions of Theorem~\ref{thm_helper_general} hold, let $Z_1,Z_2 \subseteq \LtRN$ be such that \eqref{cond_Z1_Z2} holds, and let $A(\cdot,t) \in Z_1$ and $\vphi( \cdot,t) \in Z_2$ for all $t \in [0,T]$. Then if $\psi(\xiv,t)$ is even wrt.\ $\xiv$, i.e., if $\psi(-\xiv,t) = \psi(\xiv,t)$ for all $\xiv \in \RN$ and $t \in [0,T]$, there~holds
    \begin{equation}\label{eq_main_TIE}
    \begin{split}
		\ddt I(\xv,t) =&  2 A(\xv,t)
        \sin\kl{\vphi(\xv,t)}
        \G_\psi \kl{A(\cdot,t) \cos\kl{\vphi(\cdot,t)}}(\xv)
        \\
        &-
        2 A(\xv,t) \cos\kl{\vphi(\xv,t)}\G_\psi \kl{A(\cdot,t) \sin\kl{\vphi(\cdot,t)}}(\xv) \,,
    \end{split}
	\end{equation}
and furthermore, if $A(\xv,t)$ and $\vphi(\xv,t)$ are also differentiable wrt.\ $t$, then
    \begin{equation}\label{eq_main_TPE}
    \begin{split}
		2 I(\xv,t)\ddt \vphi(\xv,t)  =&   2 A(\xv,t) 
        \cos\kl{\vphi(\xv,t)} 
        \G_\psi \kl{A(\cdot,t) \cos\kl{\vphi(\cdot,t)}}(\xv)
        \\
        & + 2 A(\xv,t) \sin\kl{\vphi(\xv,t)}\G_\psi \kl{A(\cdot,t) \sin\kl{\vphi(\cdot,t)}}(\xv)\,.
    \end{split}
	\end{equation}   
If instead $\psi(\xiv,t)$ is odd wrt.\ $\xiv$, i.e., if $\psi(-\xiv,t) = -\psi(\xiv,t)$ for all $\xiv \in \RN$ and $t \in [0,T]$, then the above two equations hold with swapped right-hand sides.
\end{theorem}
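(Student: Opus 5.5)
The plan is to combine Theorem~\ref{thm_main_general} with Proposition~\ref{prop_G_split}, using the parity observation \eqref{obs_real} to check the hypothesis of the latter. Throughout, $t \in [0,T]$ is fixed, and we write $A = A(\cdot,t)$ and $\vphi = \vphi(\cdot,t)$.

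\textbf{Step 1 (parity $\Rightarrow$ realness of $\G_\psi$).} Suppose $\psi(\cdot,t)$ is even, so that $\psi'(\cdot,t)$ is even and real. For real $v \in X$ we have $\overline{\F(v)(\xiv)} = \F(v)(-\xiv)$. Hence the function $\psi'(t)\F(v)$ has the same Hermitian symmetry, and so $\G_\psi v = \FI(\psi'(t)\F(v))$ is real-valued; this is the first line of \eqref{obs_real}. If instead $\psi$ is odd, then $\psi'$ is odd and real. In that case $\psi'\F(v)$ is anti-Hermitian, so $\G_\psi v$ is purely imaginary, and $i\G_\psi v$ is real. I will carry this out as a short verification of the relevant direction of \eqref{obs_real}, since that equivalence was only asserted in the text.

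\textbf{Step 2 (verify Assumption~\ref{ass_general}).} By hypothesis, \eqref{cond_Z1_Z2} holds for $Z_1, Z_2$. Proposition~\ref{prop_G_split} therefore gives Assumption~\ref{ass_general} with the explicit operators
\[
\L_\psi(A,\vphi) = \sin(\vphi)\G_\psi(A\cos\vphi) - \cos(\vphi)\G_\psi(A\sin\vphi),
\qquad
\S_\psi(A,\vphi) = \cos(\vphi)\G_\psi(A\cos\vphi) + \sin(\vphi)\G_\psi(A\sin\vphi)
\]
in the even case. In the odd case the two right-hand sides are swapped.

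\textbf{Step 3 (conclude).} Since $A(\cdot,t) \in Z_1$ and $\vphi(\cdot,t) \in Z_2$, and the assumptions of Theorem~\ref{thm_helper_general} hold, Theorem~\ref{thm_main_general} applies. It yields $\ddt I = 2A\,\L_\psi(A,\vphi)$ and, under time-differentiability, $2I\,\ddt\vphi = 2A\,\S_\psi(A,\vphi)$. Substituting the formulas from Step 2 gives \eqref{eq_main_TIE} and \eqref{eq_main_TPE} directly, and the odd case gives the swapped versions.

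\textbf{Expected obstacle.} The only genuinely non-formal point is Step 1. There I need $\F$ of a real $L_2$ function to satisfy the symmetry $\overline{\F v(\xiv)} = \F v(-\xiv)$ almost everywhere. I also need the multiplication by $\psi'$ to preserve this symmetry within the domain $X$ on which $\G_\psi$ is bounded. Both follow by density from the integrable case. Everything else is bookkeeping.
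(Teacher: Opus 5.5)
Your even-case argument is the paper's proof. The paper's proof of Theorem~\ref{thm_main_wave} is the one-liner ``combine Proposition~\ref{prop_G_split} and \eqref{obs_real} with Theorem~\ref{thm_main_general}''. Your Steps~1--3 carry this out correctly for even $\psi$, and you rightly note that what matters is the parity of $\psi'(t)$, inherited from $\psi(\cdot,t)$.

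The odd case does not follow as you state it. The error is in the step you dismissed as bookkeeping: ``in the odd case the two right-hand sides are swapped''. Taken literally, with $\G_\psi$ kept, the swapped right-hand sides are purely imaginary by your own Step~1, whereas $\ddt I$ and $2I\ddt\vphi$ are real. The real-valued building blocks are $H_c := i\G_\psi(A\cos\vphi)$ and $H_s := i\G_\psi(A\sin\vphi)$. Linearity gives $i\G_\psi(Ae^{i\vphi}) = H_c + iH_s$, hence
\[
e^{-i\vphi}\, i\G_\psi\kl{Ae^{i\vphi}} = \kl{\cos(\vphi) H_c + \sin(\vphi) H_s} + i\kl{\cos(\vphi) H_s - \sin(\vphi) H_c} \,.
\]
So $\L_\psi = \cos(\vphi)H_c + \sin(\vphi)H_s$ and $\S_\psi = \cos(\vphi)H_s - \sin(\vphi)H_c$. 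For odd $\psi$ this means:
\begin{itemize}
\item \eqref{eq_main_TIE} holds with the right-hand side of \eqref{eq_main_TPE}, after replacing $\G_\psi$ by $i\G_\psi$;
\item \eqref{eq_main_TPE} holds with the \emph{negative} of the correspondingly modified right-hand side of \eqref{eq_main_TIE}.
\end{itemize}

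You can check the sign on a concrete case. Take $N=1$ and $\psi(\xi,t) = 2\pi\xi t$. Then $i\G_\psi = \partial_x$ by \eqref{Fourier_diff}, and $\FI(e^{i\psi(t)}u)(x) = \FI(u)(x+t)$, so $\ddt\vphi = \partial_x\vphi$. This agrees with $I\ddt\vphi = A\S_\psi$ using the corrected $\S_\psi = A\,\partial_x\vphi$. The swapped formula instead gives $-A\,\partial_x\vphi$.

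The sign error comes from the odd-case formula for $\S_\psi$ in Proposition~\ref{prop_G_split}. Citing that proposition as a black box, as both you and the paper do, therefore does not prove the odd case. You need to redo the Euler-formula computation directly and state the corrected equations above.
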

\begin{proof}
This follows by combining Proposition~\ref{prop_G_split} and \eqref{obs_real} with Theorem~\ref{thm_main_general}.    
\end{proof}

The following remark on approximate equations will become useful in Section~\ref{sect_WFS}.

\begin{remark}
In the setting of the above theorem, consider the case that $\vphi(\xv,t)$ is sufficiently small such that $\sin(\vphi(\xv,t)) \approx \vphi(\xv,t)$ and $\cos(\vphi(\xv,t)) \approx 1$. (Note that here ``small'' means small wrt.\ to the chosen norm on $X$.) Then formally,~\eqref{eq_main_TIE}~reads 
    \begin{equation*}
        \ddt I(\xv,t) \approx   2 A(\xv,t)
        \vphi(\xv,t)
        \G_\psi \kl{A(\cdot,t)}(\xv)
        -
        2 A(\xv,t) \G_\psi \kl{A(\cdot,t) \vphi(\cdot,t)}(\xv)  \,,
    \end{equation*}
which we can rewrite as
    \begin{equation*}
        \ddt I(\xv,t) \approx   2 \kl{ \rule{0pt}{2.5ex} \G_\psi \kl{A(\cdot,t)}(\xv) \, \mathcal{I} -  A(\xv,t) \G_\psi  }\kl{ A(\xv,t)
        \vphi(\xv,t) } \,.
    \end{equation*}
Assuming that $ \kl{ \G_\psi \kl{A(\cdot,t)}(\xv) \, \mathcal{I} -  A(\xv,t) \G_\psi  }$ is invertible and $A(\xv,t) \neq 0$, this implies
    \begin{equation*}
        \boxed{\vphi(\xv,t) \approx  \frac{1}{2 A(\xv,t)}\kl{  \rule{0pt}{2.5ex} \G_\psi \kl{A(\cdot,t)}(\xv) \, \mathcal{I} -  A(\xv,t) \G_\psi  }^{-1} \kl{ \ddt I(\xv,t) }  \,,}
    \end{equation*}
which is an explicit reconstruction formula for the phase $\vphi(\xv,t)$. On the other hand, if $\psi(\cdot,t)$ is odd, such that \eqref{eq_main_TIE} holds with the swapped right-hand side of \eqref{eq_main_TPE}, i.e., 
    \begin{equation*}
    \begin{split}
		\ddt I(\xv,t) =&   2 A(\xv,t) 
        \cos\kl{\vphi(\xv,t)} 
        i\G_\psi \kl{A(\cdot,t) \cos\kl{\vphi(\cdot,t)}}(\xv)
        \\
        & + 2 A(\xv,t) \sin\kl{\vphi(\xv,t)}i\G_\psi \kl{A(\cdot,t) \sin\kl{\vphi(\cdot,t)}}(\xv) \,,
    \end{split}
	\end{equation*}
then with the approximation $\sin(\vphi(\xv,t)) \approx \vphi(\xv,t)$ and $\cos(\vphi(\xv,t)) \approx 1$ we obtain
    \begin{equation*}
		\ddt I(\xv,t) =   2 A(\xv,t)  
        i\G_\psi \kl{A(\cdot,t) }(\xv)
         + 2 A(\xv,t) \vphi(\xv,t)i\G_\psi \kl{A(\cdot,t) \vphi(\cdot,t)}(\xv) \,,
	\end{equation*}
which, again assuming $A(\xv,t) \neq 0$, can be reformulated into the nonlinear equation
    \begin{equation*}
		\vphi(\xv,t)i\G_\psi \kl{A(\cdot,t) \vphi(\cdot,t)}(\xv) 
        =
        \frac{1}{2 A(\xv,t)} \kl{ \ddt I(\xv,t) - 2 A(\xv,t)  
        \G_\psi \kl{A(\cdot,t) }(\xv) } \,.
	\end{equation*}
The generalized transport of phase equation \eqref{eq_main_TPE} can be simplified analogously.
\end{remark}

Finally, note that not every $\psi(\xiv,t)$ leads to a useful equation for $\vphi(\xv,t)$, as we see~in 

\begin{example}[Not every $\psi$ is useful]\label{ex_not_useful}
Consider the following two special cases:
\begin{itemize}
    \item $\psi(\xiv,t) = c t$. In this case we have $\psi'(t) = c$, $\G_\psi(v) = c v$, and thus
        \begin{equation*}
            i A(\xv,t)e^{-i\vphi(\xv,t)} \G_\psi\kl{A(\cdot,t)e^{i\vphi(\cdot,t)}}(\xv) 
            =
            i c A(\xv,t)^2
            =
            i c I(\xv,t) \,.
        \end{equation*}
    Hence, \eqref{eq_ddt_I_general_G} and \eqref{eq_ddt_phi_general_G} simplify to
        \begin{equation*}
		      \ddt I(\xv,t) = 0 \,,
            \qquad
            \text{and}
            \qquad
            2 I(\xv,t) \ddt \vphi(\xv,t) = c I(\xv,t) \,,
	    \end{equation*}
    which both are not useful for determining the phase $\vphi(\xv,t)$. This is not very surprising, since this choice of $\psi(\xiv,t)$ does not induce intensity changes in $I(\xv,t)$.
    \item $\psi(\xiv,t) = 2\pi (\xiv_1 + \dots + \xiv_N) t$. In this case, we have $\psi'(t) = 2\pi (\xiv_1 + \dots + \xiv_N)$,
        \begin{equation*}
            i\G_\psi(v)(\xv) = \sum_{k=1}^N \FI\kl{ i 2\pi \xi_k \F(v)  }
            \overset{\eqref{Fourier_diff}}{=}
            \sum_{k=1}^N \frac{\partial}{\partial_{x_k}} v(\xv)\,, 
        \end{equation*}
        and thus
        \begin{equation*}
        \begin{split}
            &i A(\xv,t)e^{-i\vphi(\xv,t)} \G_\psi\kl{A(\cdot,t)e^{i\vphi(\cdot,t)}}(\xv) 
            =
            A(\xv,t)e^{-i\vphi(\xv,t)} \sum_{k=1}^N \frac{\partial}{\partial_{x_k}}  \kl{A(\cdot,t)e^{i\vphi(\cdot,t)}}(\xv)
            \\
            &\qquad
            = A(\xv,t) \sum_{k=1}^N \frac{\partial}{\partial_{x_k}} 
            A(\xv,t)
            +
            i A(\xv,t)^2 \sum_{k=1}^N \frac{\partial}{\partial_{x_k}} \vphi(\xv,t)\,.
        \end{split}
        \end{equation*} 
    Hence, \eqref{eq_ddt_I_general_G} and \eqref{eq_ddt_phi_general_G} simplify to
        \begin{equation}\label{eq_helper_linear_I}
		      \ddt I(\xv,t) = 2 A(\xv,t) \sum_{k=1}^N \frac{\partial}{\partial_{x_k}} 
            A(\xv,t) \,,
        \end{equation}
    and
        \begin{equation}\label{eq_helper_linear_phi}
		      2 I(\xv,t) \ddt \vphi(\xv,t) = A(\xv,t)^2 \sum_{k=1}^N \frac{\partial}{\partial_{x_k}} \vphi(\xv,t) \,,
	    \end{equation}
    respectively. Note that as in the first case, \eqref{eq_helper_linear_I} does not include $\vphi(\xv,t)$, and is thus not useful for phase reconstruction. Again, this is not very surprising, since in this case $\psi(\xiv,t)$ merely induces a linear shift in the intensity $I(\xv,t)$.
\end{itemize}
\end{example}

% % % % % % % % % % % % % % % % % % % % % % % % % %
% Subsection - Multiple Differential Measurements %
% % % % % % % % % % % % % % % % % % % % % % % % % %
\subsection{Complex Modulations and Multiple Measurements}\label{sect_multiplemeasurements}

Up to now, we have only considered real-valued modulation functions $\psi(\xiv,t)$. This was motivated by the fact that pure phase modulations $e^{i \psi(\xiv,t)}$ can be implemented physically using suitable optical elements or SLMs. However, one can also consider complex-valued modulation functions $\psi(\xiv,t) + i \omega(\xiv,t)$, for which
    \begin{equation*}
        e^{i \kl{\psi(\xiv,t) + i \omega(\xiv,t)}}
        =
        e^{i \psi(\xiv,t) } e^{-\omega(\xiv,t)}
        \qquad
        \text{in}
        \qquad
        I(\xv,t) := \abs{\FI (e^{i \kl{\psi(\xiv,t) + i \omega(\xiv,t)}} u)(\xv)}^2
    \end{equation*}
corresponds to both amplitude and phase modulations. Physically, amplitude modulations can be realized by e.g.\ using absorption masks or amplitude modulators.

Our above analysis can be easily generalized to this case: For example, using
    \begin{equation*}
    \begin{split}
        \G_{\psi,\omega} : X \subset \LtRN \to \LtRN \,,
        \qquad v
        \mapsto
        \G_{\psi,\omega}(v) := \FI\kl{ \kl{ \psi'(t) + i \omega'(t) }\F(v)  }
    \end{split}
    \end{equation*}
we obtain analogously as in Theorem~\ref{thm_helper_general} that
    \begin{equation}\label{eq_ddt_I_general_G_comp}
		\ddt I(\xv,t) = 2 \Re\kl{i A(\xv,t)e^{-i\vphi(\xv,t)} \G_{\psi,\omega}\kl{A(\cdot,t)e^{i\vphi(\cdot,t)}}(\xv)} \,,
	\end{equation}
and 
	\begin{equation}\label{eq_ddt_phi_general_G_comp}
		2 I(\xv,t) \ddt \vphi(\xv,t) = \Im\kl{i A(\xv,t)e^{-i\vphi(\xv,t)} \G_{\psi,\omega}\kl{A(\cdot,t)e^{i\vphi(\cdot,t)}}(\xv) } \,.
	\end{equation} 
Hence, if $\G_{\psi,\omega}$ allows for a decomposition of the form
    \begin{equation*}
        i \G_{\psi,\omega} \kl{Ae^{i\vphi}}(\xv)
        = e^{i \vphi(\xv)} \kl{ \L_{\psi,\omega}(A,\vphi)(\xv) + i \S_{\psi,\omega}(A,\vphi)(\xv) } \,,
    \end{equation*}
for real-valued operators $\L_{\psi,\omega}$ and $\S_{\psi,\omega}$, then the above equations simplify to
    \begin{equation*}
		\ddt I(\xv,t) = 2  A(\xv,t) \L_{\psi,\omega}(A(\cdot,t),\vphi(\cdot,t))(\xv) \,,
	\end{equation*}
and 
	\begin{equation*}
		2 I(\xv,t) \ddt \vphi(\xv,t) = A(\xv,t) \S_{\psi,\omega}(A(\cdot,t),\vphi(\cdot,t))(\xv) \,,
	\end{equation*} 
respectively. Note that in the purely complex case $\psi(\xiv,t) = 0$, we have that
    \begin{equation*}
        \G_{\psi,\omega}(v) = \FI\kl{ i \omega'(t) \F(v)  }
        =
        i \G_\omega(v) \,.
    \end{equation*}
Hence, if $\G_\omega$ allows for a decomposition of the form \eqref{eq_ass_L_S}, then we obtain the equations
    \begin{equation*}
		\ddt I(\xv,t) = -2  A(\xv,t) \S_{\omega}(A(\cdot,t),\vphi(\cdot,t))(\xv) \,,
	\end{equation*}
and 
	\begin{equation*}
		2 I(\xv,t) \ddt \vphi(\xv,t) = A(\xv,t) \L_{\omega}(A(\cdot,t),\vphi(\cdot,t))(\xv) \,,
	\end{equation*} 
which are essentially the same as in Theorem~\ref{thm_main_general} but with the roles of $\L$ and $\S$ switched.

The use of complex modulations can perhaps be best illustrated by the following

\begin{example}
Consider the purely complex modulation 
$\psi(\xiv,t) = i 2\pi (\xiv_1 + \dots + \xiv_N) t$. In this case, we have $\psi'(t) = i 2\pi (\xiv_1 + \dots + \xiv_N)$, and thus there holds
        \begin{equation*}
            \G_\psi(v)(\xv) = \sum_{k=1}^N \FI\kl{ i 2\pi \xi_k \F(v)  }
            \overset{\eqref{Fourier_diff}}{=}
            \sum_{k=1}^N \frac{\partial}{\partial_{x_k}} v(\xv)\,.
        \end{equation*}
Hence, we find as in Example~\ref{ex_not_useful} that \eqref{eq_ddt_I_general_G_comp} and \eqref{eq_ddt_phi_general_G_comp} simplify to
        \begin{equation}\label{eq_helper_linear_I_complex}
		      \ddt I(\xv,t) = - 2 A(\xv,t)^2 \sum_{k=1}^N \frac{\partial}{\partial_{x_k}} \vphi(\xv,t)
            \,,
        \end{equation}
    and
        \begin{equation*}
		      2 I(\xv,t) \ddt \vphi(\xv,t) = A(\xv,t) \sum_{k=1}^N \frac{\partial}{\partial_{x_k}} 
            A(\xv,t) \,,
	    \end{equation*}    
which, as expected, is exactly \eqref{eq_helper_linear_I} and \eqref{eq_helper_linear_phi} with swapped right-hand sides. Note, however, that \eqref{eq_helper_linear_I_complex} now does include $\vphi(\xv,t)$, making it useful for phase reconstruction.
\end{example}

Finally, note that instead of a single set of time-dependent measurements $I(\xv,t)$, one may also consider several time dependent measurements $I_{\psi_k}(\xv,t)$ of the form
    \begin{equation*}
        I_{\psi_k}(\xv,t) := \abs{\FI\kl{e^{i\psi_k(\xv,t)} u}}^2 \,,
        \qquad
        \forall \, k = 1\,,\dots \,, K \,.
    \end{equation*}
These measurements can yield complementary information useful for reconstruction, as illustrated by the following example: Assume that $K = 2$, and consider the modulations
    \begin{equation*}
        \psi_1(\xiv,t) := \psi_0(\xiv) + (2\pi)^2 \xi_1^2 t 
        \qquad \text{and} \qquad
        \psi_2(\xiv,t) := \psi_0(\xiv) + (2\pi)^2 \xi_2^2 t \,. 
    \end{equation*}
which physically can be realized via cylindrical lenses. Using our results of Section~\ref{subsect_defoucs}, in particular \eqref{eq_TIE_generalized}, for each of the corresponding intensities $I_{\psi_k}(\xv,t)$, we find that
    \begin{equation*}
	\begin{split}
	   &\ddt I_{\psi_1}(\xv,t) 
	   = \frac{\partial}{\partial x_1 }\kl{ I_{\psi_1}(\xv,t) \frac{\partial}{\partial x_1} \vphi(\xv,t)} \,,
	   \\
	   & \ddt I_{\psi_2}(\xv,t) 
	   = 
	   \frac{\partial}{\partial x_2 }\kl{ I_{\psi_2}(\xv,t) \frac{\partial}{\partial x_2} \vphi(\xv,t)} 
	   \,.
	\end{split}
	\end{equation*}
These two equations can be used to compute the derivatives $\frac{\partial}{\partial x_1} \vphi(\xv,t)$ and $\frac{\partial}{\partial x_2} \vphi(\xv,t)$, from which in turn $\vphi(\xv,t)$ can be recovered, e.g., by using the efficient CuReD method used for wavefront reconstruction via the Shack-Hartmann wavefront sensor (see below).

% % % % % % % % % % % % % % % % % % % % % % % %
% Section - Application to Fourier-type WFSs  %
% % % % % % % % % % % % % % % % % % % % % % % %
\section{Application to Fourier-type Wavefront Sensors}\label{sect_WFS}

In this section, we discuss the application of our above results to wavefront sensing via Fourier-type WFSs, and derive some new and efficient reconstruction algorithms.

% % % % % % % % % % % % % % % % % % % % % % % % % % % % % % %
% Subsection - Background on Fourier-type Wavefront Sensors %
% % % % % % % % % % % % % % % % % % % % % % % % % % % % % % %
\subsection{Background on Fourier-type Wavefront Sensors}

First, we recall some physical background on Fourier-type WFSs and their mathematical modeling. WFSs are typically used to measure wavefront aberrations in optical systems, and are a key component of adaptive optics (AO) systems used, e.g., in astronomy or medical imaging \cite{Roddier_1999,Pircher_Zawadzki_2017}. They are necessary, since normal cameras/detectors are only able to measure intensities but not phases, which encode the wavefront aberrations. 

The idea behind wavefront sensing is to modify the incoming light in such a way that its modified intensity contains (indirect) information on the original phase, from which it can then be reconstructed mathematically. The necessary modifications are, e.g., achieved using lenses, optical elements, or SLMs. For example, the well-known Shack-Hartmann wavefront sensor (SH-WFS) \cite{Ellerbroek_Vogel_2009,Platt_Shack_2001,Primot_2003} uses a quadratic array of small lenses, also called lenslets, located in the focal plane of a CCD photon detector, to essentially measure the average local gradient of the phase, from which the phase itself can then be efficiently reconstructed numerically; see, e.g.,\cite{Zhariy_Neubauer_Rosensteiner_Ramlau_2011,Rosensteiner_2011_01,Rosensteiner_2011_02}.

% Figure - Image Formation Model 
\begin{figure}[ht!]
    \centering
    \begin{tikzpicture}[x=0.75pt,y=0.75pt,yscale=-0.85,xscale=0.85]
    
        % Ellipses representing lenses
        \draw (176.25,150) .. controls (176.25,94.77) and (182.41,50) .. (190,50) .. controls (197.59,50) and (203.75,94.77) .. (203.75,150) .. controls (203.75,205.23) and (197.59,250) .. (190,250) .. controls (182.41,250) and (176.25,205.23) .. (176.25,150) -- cycle;
        \draw (476.25,150) .. controls (476.25,94.77) and (482.41,50) .. (490,50) .. controls (497.59,50) and (503.75,94.77) .. (503.75,150) .. controls (503.75,205.23) and (497.59,250) .. (490,250) .. controls (482.41,250) and (476.25,205.23) .. (476.25,150) -- cycle;
    
        % Straight lines representing Lenses and Fourier Plane
        \draw (340,90) -- (340,120);
        \draw (340,160) -- (340,190);
        \draw (190,50) -- (190,250);
        \draw (490,50) -- (490,250);
        \draw [ultra thick] (650,50) -- (650,250);
        
        % Curved lines representing light waves
        \draw (40,130) .. controls (60.8,100.3) and (19.8,79.3) .. (40,50);
        \draw (40,250) .. controls (60.8,220.3) and (19.8,199.3) .. (40,170);
        \draw (640,130) .. controls (660.8,100.3) and (619.8,79.3) .. (640,50);
        \draw (640,250) .. controls (660.8,220.3) and (619.8,199.3) .. (640,170);

        % Arrows representing focal lengths
        \draw (100,270) -- (42,270);
        \draw [shift={(40,270)}, rotate = 360][color={rgb, 255:red, 0; green, 0; blue, 0}][line width=0.75] (10.93,-3.29) .. controls (6.95,-1.4) and (3.31,-0.3) .. (0,0) .. controls (3.31,0.3) and (6.95,1.4) .. (10.93,3.29);
        \draw (250,270) -- (192,270);
        \draw [shift={(190,270)}, rotate = 360][color={rgb, 255:red, 0; green, 0; blue, 0}][line width=0.75] (10.93,-3.29) .. controls (6.95,-1.4) and (3.31,-0.3) .. (0,0) .. controls (3.31,0.3) and (6.95,1.4) .. (10.93,3.29);
        \draw (400,270) -- (342,270);
        \draw [shift={(340,270)}, rotate = 360][color={rgb, 255:red, 0; green, 0; blue, 0}][line width=0.75] (10.93,-3.29) .. controls (6.95,-1.4) and (3.31,-0.3) .. (0,0) .. controls (3.31,0.3) and (6.95,1.4) .. (10.93,3.29);
        \draw (550,270) -- (492,270);
        \draw [shift={(490,270)}, rotate = 360][color={rgb, 255:red, 0; green, 0; blue, 0}][line width=0.75] (10.93,-3.29) .. controls (6.95,-1.4) and (3.31,-0.3) .. (0,0) .. controls (3.31,0.3) and (6.95,1.4) .. (10.93,3.29);
        \draw (130,270) -- (188,270);
        \draw [shift={(190,270)}, rotate = 180] [color={rgb, 255:red, 0; green, 0; blue, 0}][line width=0.75] (10.93,-3.29) .. controls (6.95,-1.4) and (3.31,-0.3) .. (0,0) .. controls (3.31,0.3) and (6.95,1.4) .. (10.93,3.29);
        \draw (280,270) -- (338,270);
        \draw [shift={(340,270)}, rotate = 180] [color={rgb, 255:red, 0; green, 0; blue, 0}][line width=0.75] (10.93,-3.29) .. controls (6.95,-1.4) and (3.31,-0.3) .. (0,0) .. controls (3.31,0.3) and (6.95,1.4) .. (10.93,3.29) ;
        \draw (430,270) -- (488,270);
        \draw [shift={(490,270.3)}, rotate = 180.29] [color={rgb, 255:red, 0; green, 0; blue, 0}][line width=0.75] (10.93,-3.29) .. controls (6.95,-1.4) and (3.31,-0.3) .. (0,0) .. controls (3.31,0.3) and (6.95,1.4) .. (10.93,3.29);
        \draw (580,270) -- (638,270);
        \draw [shift={(640,270)}, rotate = 180] [color={rgb, 255:red, 0; green, 0; blue, 0}][line width=0.75] (10.93,-3.29) .. controls (6.95,-1.4) and (3.31,-0.3) .. (0,0) .. controls (3.31,0.3) and (6.95,1.4) .. (10.93,3.29);

        % Text description of focal length
        \draw (110,258.4) node [anchor=north west][inner sep=0.75pt]{$f$};
        \draw (260,258.4) node [anchor=north west][inner sep=0.75pt]{$f$};
        \draw (410,258.4) node [anchor=north west][inner sep=0.75pt]{$f$};
        \draw (560,258.4) node [anchor=north west][inner sep=0.75pt]{$f$};
    
        % Text description of Image and Fourier planes   
        \draw (172,23) node [anchor=north west][inner sep=0.75pt][align=left]{Lens};
        \draw (472,23) node [anchor=north west][inner sep=0.75pt][align=left]{Lens};
        \draw (290,23) node [anchor=north west][inner sep=0.75pt][align=left]{Fourier plane};
        \draw (2,23) node [anchor=north west][inner sep=0.75pt][align=left]{Image plane};
        \draw (562,23) node [anchor=north west][inner sep=0.75pt][align=left]{Conjugate plane};
        \draw (280,63.4) node [anchor=north west][inner sep=0.75pt]{Optical filtering};
        \draw (323,125) node [anchor=north west][inner sep=0.75pt]{\Huge $\Delta$};
        \draw (305,220) node [anchor=north west][inner sep=0.75pt]{$\triangleq$ OTF};
        \draw (275,195) node [anchor=north west][inner sep=0.75pt][align=left]{Optical element};
        \draw (2,138.4) node [anchor=north west][inner sep=0.75pt]{Incident field};
        \draw (540,140) node [anchor=north west][inner sep=0.75pt]{Filtered field};
        \draw (680,80) node [anchor=north west][inner sep=0.75pt][rotate=270]{Camera/Detector};

    \end{tikzpicture}
    \caption{Schematic depiction of a Fourier-type WFS measurement system \cite{HuNeuSha_2023,Hubmer_Laidlaw_Ramlau_Sherina_Stadler_2025}.}
    \label{fig_fwfs}
\end{figure}

Here, we consider the large class of Fourier-type WFSs~\cite{Fauv16,Potiron2019}, of which the pyramid WFS (P-WFS) is perhaps the most well-known instance \cite{Raga96,RaDi02}. The physical principle behind Fourier-type WFSs is illustrated in Figure~\ref{fig_fwfs}: Incoming light is focused onto some type of optical element or SLM located in the focal plane, which modulates/filters the field, before it is again focused onto a camera or a CCD detector. This filtered intensity measurement is then used to numerically reconstruct the unknown phase. In the case of the P-WFS, the optical element is a 4-sided pyramidal prism, which essentially splits the incoming light into four beams, creating four differently filtered  (and potentially overlapping) images on the CCD detector. Extensions to $2$-sided (roof WFS), $3$-sided, and $6$-sided prisms, as well as cone shaped (cone WFS) and general $n$-sided pyramidal prisms haven been investigated in the past~\cite{Clare_ao4elt5,ClareCone2020,engler2017,Veri04}. Alternatively, Fourier-type WFSs not using prismatic elements such as the Zernike WFS \cite{Zernike1934} and the 4QPM WFS \cite{Rouan2000} employ (constant) phase shifts on different parts of the focal plane.

\begin{table}[ht!]
\setlength{\tabcolsep}{15pt}
\renewcommand{\arraystretch}{1.5}
\centering
    \begin{tabular}{|c|c|c|c|}
        \hline  
        & 4-sided PWFS
        & x-Roof WFS
        & Cone WFS
        \\
		\hline 
        $\Psi(\xi_1,\xi_2)=$ 
        & $c\kl{\abs{\xi_1}+\abs{\xi_2}}$ 
        & $c\abs{\xi_1}$ 
        & $ \rule{0ex}{3.5ex} c\sqrt{\abs{\xi_1}^2+\abs{\xi_2}^2}$ 
        \\
        \hline 
        & Defocus WFS
        & y-Roof WFS
        & 4QPM WFS 
        \\
		\hline 
        $\Psi(\xi_1,\xi_2)=$ 
        & $c\kl{\abs{\xi_1}^2+\abs{\xi_2}^2}$ 
        & $\rule{0ex}{4.5ex} c\abs{\xi_2}$ 
        &  $\begin{cases} 
            c \,, & \xi_1 \xi_2 < 0 \,, \\ 0 \,, & \text{otherwise} 
            \end{cases}$ \\
        \hline 
        
        & Riesz WFS
        & Zernike WFS
        & Astigmatism WFS
        \\
        \hline
        $\Psi(\xi_1,\xi_2)=$ 
        & $\rule{0ex}{4ex}  c\displaystyle\frac{\abs{\xi_1}+\abs{\xi_2}}{\abs{\xiv}}$
        & $c\chi_{B_{\rho}(0)}(\xi_1,\xi_2)$
        & $c \, \xi_1 \, \xi_2$ \\
        \hline
	\end{tabular}
    \caption{Shape functions $\Psi(\xiv)$, with $\xiv = (\xi_1,\xi_2)$, for different Fourier-type WFSs. Here, $c>0$ and $\rho > 0$  are a-priori fixed constants. For an illustration, see Figure~\ref{fig_otfs}.}
    \label{table_psi}
\end{table}

\begin{figure}[ht!]
    \centering
    \includegraphics[width=\textwidth]{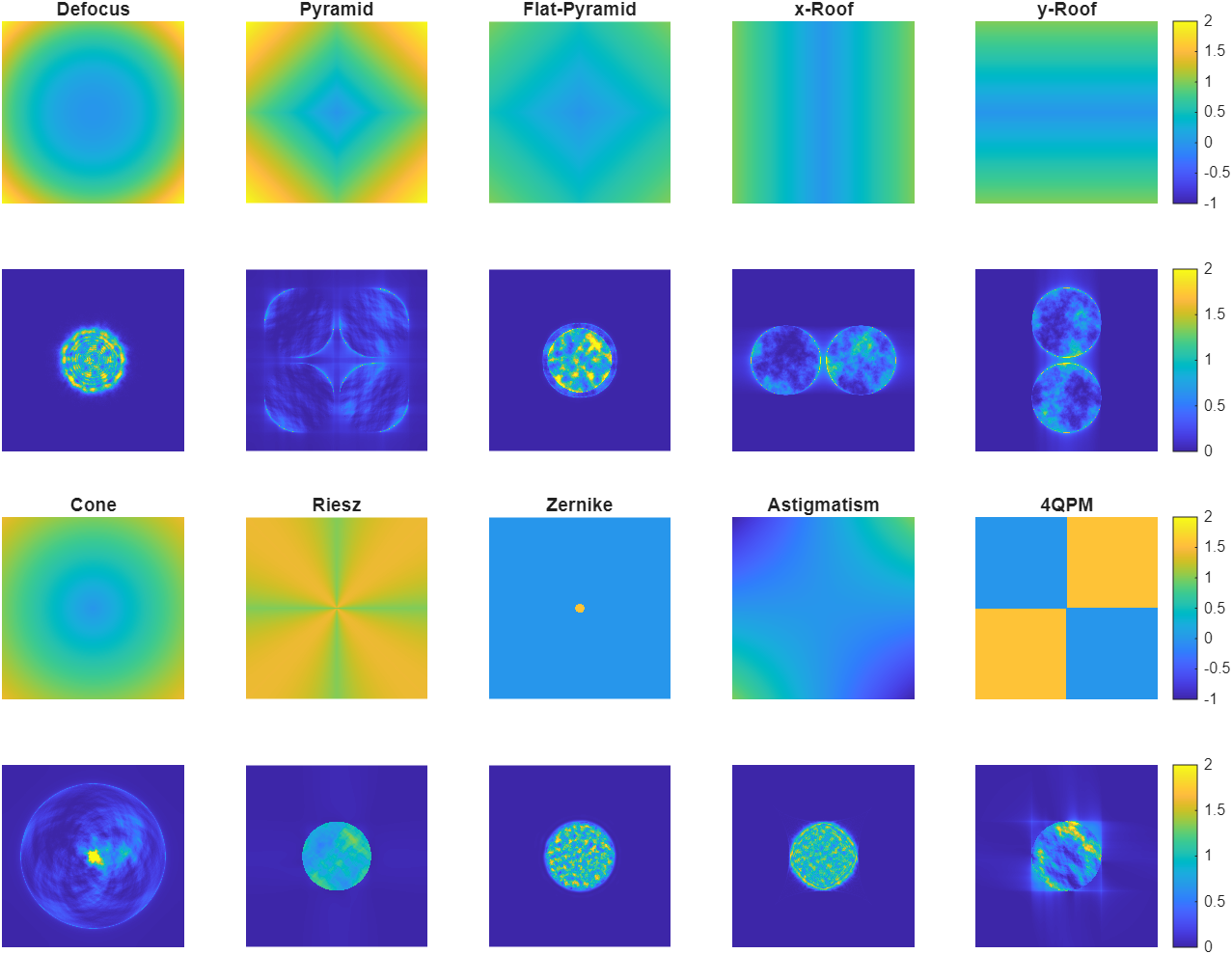}
    \caption{Illustration of shape functions $\Psi(\xiv)$, with $\xiv = (\xi_1,\xi_2)$, as defined in Table~\ref{table_psi} for different Fourier-type WFSs (1st and 3rd row). Below each function, the intensity image $I(\xv)$ as defined in \eqref{def_I_FtWFS} for some wavefront aberration $\phi(\xv)$  (2nd and 4th row).}
  \label{fig_otfs}
\end{figure}

As the name suggests, Fourier-type WFSs are typically modelled within the framework of Fourier optics \cite{Goodman_2005}. In particular, let $\phi : \R^2 \to \R$ denote the wavefront aberration and let $\chi_\Omega$ denote the indicator function of $\Omega \subset \R^2$ corresponding to the aperture of the imaging system. Furthermore, define the optical transfer function ($\OTF_\Psi$) by
    \begin{equation*}
        \OTF_\Psi(\xiv) := e^{i \Psi(\xiv)} \,, 
    \end{equation*}
where $\Psi(\xiv)$ is the shape function corresponding to the optical element or SLM \cite{Fauv16}; see Table~\ref{table_psi} and Figure~\ref{fig_otfs} for examples used below. With this, the intensity measurement of the Fourier-type WFS (recorded by the camera/detector) can be modelled as 
    \begin{equation}\label{def_I_FtWFS}
        I(\xv) =
        \abs{\FI\kl{ e^{i \Psi(\xiv)} \F\kl{\sqrt{n} \chi_\Omega e^{-i \phi}}(\xiv) }(\xv)}^2 \,.
    \end{equation}
where the constant $n > 0$ denotes the spatial average incoming flux \cite{Laidlaw_2025}. Note that the Fourier transforms model the passage of light through the two lenses, while the term $e^{i \Psi(\xiv)}$ corresponds to the modulation due to the optical element or SLM.

Concerning the reconstruction of $\phi$ from measurements of $I$, several reconstruction algorithms have been proposed in the past. These are typically based on simplifications of the nonlinear model \eqref{def_I_FtWFS} for specific forms of $\Psi(\xiv)$, and often use linearizations valid for slowly varying aberrations. For an overview of analytic reconstruction methods for the P-WFS see \cite{ShaHuRa20}, while for AI-based methods see, e.g., \cite{Vera_AO4ELT7proc}. For general choices of $\Psi(\xiv)$, the NOnlinear Pyramid Extension (NOPE) has been proposed in \cite{HuNeuSha_2023}. However, note that NOPE is currently too computationally expensive for real-time AO control.

% % % % % % % % % % % % % 
% Subsection - KEY IDEA %
% % % % % % % % % % % % %
\subsection{Key Ideas for Differential Fourier-type Wavefront Sensing}

In this section, we present our key ideas for connecting Fourier-type wavefront sensing with our above results on differential Fourier phase retrieval. For this, note first that the intensity measurement $I(\xv)$ defined in \eqref{def_I_FtWFS} does not depend on time; only a single measurement is made. However, we can artificially introduce time, and define
    \begin{equation}\label{def_Ixt_WFS}
        I(\xv,t) :=
        \abs{\FI\kl{ e^{i t \Psi(\xiv)} \F\kl{\sqrt{n}\chi_\Omega e^{-i \phi}}(\xiv) }(\xv)}^2 \,,
    \end{equation}
which we can equivalently rewrite as
    \begin{equation*}
        I(\xv,t) =
        \abs{\FI\kl{ e^{i t \Psi(\xiv)} u_\phi(\xiv)}(\xv)}^2 \,,
        \qquad
        \text{where}
        \qquad
        u_\phi(\xiv) := \F\kl{\sqrt{n} \chi_\Omega e^{-i \phi}}(\xiv)  \,.
    \end{equation*}  
Hence, $I(\xv,t)$ is of the form \eqref{def_Ixt} with $\psi(\xiv,t) = t\Psi(\xiv)$, and thus Theorem~\ref{thm_helper_general} yields
    \begin{equation*}
		\ddt I(\xv,t) = 2 \Re\kl{i A(\xv,t)e^{-i\vphi(\xv,t)} \G_\psi\kl{A(\cdot,t)e^{i\vphi(\cdot,t)}}(\xv)} \,,
	\end{equation*}
where we now have that
    \begin{equation}\label{def_GPsi}
        \G_\psi(v) = \FI\kl{ \psi'(t) \F(v)  }
        = \FI\kl{ \Psi \F(v)  } =: \G_\Psi(v) \,.
    \end{equation}
Next, note that while our actual Fourier-type WFS measurements $I(\xv)$ defined in \eqref{def_I_FtWFS} now correspond to the measurement $I(\xv,1)$, i.e., to $t=1$, we also have that
    \begin{equation*}
        I(\xv,0) =
        \abs{\FI\kl{ u_\phi(\xiv)}(\xv)}^2 
        =
        \abs{\sqrt{n} \chi_\Omega(\xv)}^2 
        =
        n \chi_\Omega(\xv)\,.
    \end{equation*}
Hence, in this specific setting we also have access to a second measurement, and thus
    \begin{equation*}
        \ddt I(\xv,t) \vert_{t=0} = \lim_{\tau \to 0} \frac{I(\xv,\tau) - I(\xv,0)}{\tau}
        \approx
        I(\xv,1) - I(\xv,0)
        =
        I(\xv) - n \chi_\Omega(\xv) 
    \end{equation*}	 
can be approximately computed numerically. Furthermore, note that since
    \begin{equation*}
        \FI\kl{ e^{i t \Psi(\xiv)} u_\phi(\xiv)}(\xv) \big\vert_{t=0}
        =
        \sqrt{n} \chi_\Omega(\xv) e^{-i \phi(\xv)} \,,
    \end{equation*}
it follows from the definition \eqref{polar_form_t} of $A(\xv,t)$ and $\vphi(\xv,t)$ that
    \begin{equation*}
        A(\xv,0) = \sqrt{n} \chi_\Omega(\xv) \,,
        \qquad
        \text{and}
        \qquad
        \vphi(\xv,0) = -\phi(\xv) \,.
    \end{equation*}
Combining the above arguments, we thus obtain the following result:
    
\begin{theorem}
Let $I(\xv,t)$ be defined as in \eqref{def_Ixt_WFS}, let $I(\xv) = I(\xv,1)$ be as in \eqref{def_I_FtWFS}, and let $\Psi(\xiv)$ be such $\Psi(\xiv) \F\kl{\chi_\Omega e^{-i \phi}}(\xiv) \in \LtRN$. Then $I(\xv,t) \in C^1([0,T],L^1(\RN))$ and
    \begin{equation}\label{main_WFS}
		I(\xv) - n \chi_\Omega(\xv)  
        \approx 
        \ddt I(\xv,t)\Big\vert_{t=0} = 2 n \Re\kl{i \, \chi_\Omega(\xv) e^{i\phi(\xv)} \G_\Psi\kl{\chi_\Omega e^{-i\phi}}(\xv)} \,.
	\end{equation}
\end{theorem}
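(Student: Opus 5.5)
The plan is to read the WFS setting as a special case of the general framework, with $u = u_\phi := \F\kl{\sqrt{n}\chi_\Omega e^{-i\phi}}$ and $\psi(\xiv,t) = t\Psi(\xiv)$. Then $\psi'(t) = \Psi(\xiv)$ for every $t$, and $\psi(\xiv,\cdot)\in C^1([0,T])$ trivially. First I would verify the hypotheses of Theorem~\ref{thm_ddt_I_general}. Since $\Omega$ is bounded, $\chi_\Omega e^{-i\phi}\in\LtRN$, and by Plancherel $u_\phi \in \LtRN$. The extra hypothesis $u\psi'(t) = \sqrt{n}\,\Psi\,\F(\chi_\Omega e^{-i\phi}) \in \LtRN$ is exactly the assumption of the theorem, uniformly in $t$. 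The amplitude and phase in \eqref{polar_form_t} are measurable real functions with $A(\cdot,t)\in\LtRN$, by unitarity of $\FI$. Theorem~\ref{thm_ddt_I_general} then gives $I\in C^1([0,T],L^1(\RN))$ together with formula \eqref{eq_ddt_I_general}.

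The continuity claim deserves a short justification. I would note that $t\mapsto e^{it\Psi}u_\phi$ is $C^1$ into $\LtRN$ with derivative $i\Psi e^{it\Psi}u_\phi$. This follows from dominated convergence: the bound $|(e^{ih\Psi}-1)/h|\le|\Psi|$ dominates the difference quotients by $|\Psi u_\phi|\in L^2$. Applying the unitary map $\FI$ and the product rule for $|w|^2 = w\bar w$, with $L^2\times L^2\to L^1$ continuity of multiplication, gives the $C^1([0,T],L^1)$ regularity. Constant-phase and sign issues do not arise here.

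Next I would evaluate at $t=0$. The factor $e^{i\psi(0)}=1$, so $\FI(u_\phi) = \sqrt{n}\chi_\Omega e^{-i\phi}$, and its conjugate is $\sqrt{n}\chi_\Omega e^{i\phi}$. Using \eqref{def_GPsi}, we also have
\[
\FI\kl{i\Psi u_\phi} = i\,\FI\kl{\Psi\,\F\kl{\sqrt n \chi_\Omega e^{-i\phi}}} = i\sqrt{n}\,\G_\Psi\kl{\chi_\Omega e^{-i\phi}} .
\]
Substituting both into \eqref{eq_ddt_I_general} yields $2n\Re\kl{i\chi_\Omega e^{i\phi}\G_\Psi(\chi_\Omega e^{-i\phi})}$, which is the equality in \eqref{main_WFS}. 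Here $\G_\Psi$ only needs to be defined on the single function $\chi_\Omega e^{-i\phi}$, and it is well defined by the same $L^2$ hypothesis. So Assumption~\ref{ass_G} in full is not needed, and I would bypass Theorem~\ref{thm_helper_general} to avoid it.

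Finally, the ``$\approx$'' part is only the forward difference quotient with step $\tau=1$, combined with $I(\xv,1)=I(\xv)$ and $I(\xv,0)=n\chi_\Omega(\xv)$. I would state it as a heuristic. If desired, I would quantify it via the mean value theorem in $L^1$: the error is at most $\sup_t\norm{\partial_t I - \partial_t I|_{t=0}}$. Bounding that would need second-moment control of the form $\Psi^2 u_\phi\in L^2$, which is not assumed, so I would not attempt it. The only genuinely delicate point is the $C^1$ regularity under the weak $L^2$ hypothesis, and that reduces to the dominated convergence argument above.
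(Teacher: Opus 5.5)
Your proof is correct and is essentially the paper's argument: you specialize the general derivative formula to $u=u_\phi$ and $\psi(\xiv,t)=t\Psi(\xiv)$, then evaluate at $t=0$. The one difference is that you go through Theorem~\ref{thm_ddt_I_general}, whose only extra hypothesis is exactly $\Psi u_\phi\in\LtRN$, whereas the paper uses Theorem~\ref{thm_helper_general} with $X=\{v\in\LtRN : \FI(\Psi\F(v))\in\LtRN\}$. Your route is slightly cleaner, since for unbounded $\Psi$ that $X$ is not a closed subspace of $\LtRN$ on which $\G_\Psi$ is bounded, as Assumption~\ref{ass_G} formally demands; your dominated-convergence argument also makes explicit the $C^1([0,T],L^1(\RN))$ regularity that the paper only asserts.
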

\begin{proof}
This directly follows by combining Theorem~\ref{thm_helper_general} with the arguments above. Note only that for the definition space of $\G_\Psi : X \subset \LtRN \to \LtRN$ given in \eqref{def_GPsi}, we use
    \begin{equation*}
        X:= \Kl{ v \in \LtRN \, \vert \, \FI\kl{\Psi \F\kl{v}} \in \LtRN} \,,
    \end{equation*}
and that since $\Psi \F\kl{\chi_\Omega e^{-i \phi}} \in \LtRN$, Plancherel's theorem yields $\chi_\Omega e^{-i \phi} \in X$.
\end{proof}

Furthermore, if $\Psi(\xiv)$ is even or odd, then similarly as in Theorem~\ref{thm_main_wave}, we obtain

\begin{theorem}
Let $I(\xv,t)$ be defined as in \eqref{def_Ixt_WFS}, let $I(\xv) = I(\xv,1)$ be as in \eqref{def_I_FtWFS}, and let $\Psi(\xiv)$ be such $\Psi(\xiv) \F\kl{\chi_\Omega e^{-i \phi}}(\xiv) \in \LtRN$. Furthermore, assume that
     \begin{equation*}
        \chi_\Omega \cos(\phi) \in D(\G_\Psi) 
        \,, \qquad \text{and} \qquad
        \chi_\Omega \sin(\phi) \in D(\G_\Psi) \,,
    \end{equation*}
and that $\Psi(\xiv)$ is even, $\Psi(-\xiv) = \Psi(\xiv)$ for all $\xiv \in \RN$. Then there holds 
    \begin{equation}\label{eq_main_TIE_WFS}
    \begin{split}
		I(\xv) - n\chi_\Omega(\xv) 
        \approx  \ddt I(\xv,t)\Big\vert_{t=0} =&  -2 n\chi_\Omega(\xv)
        \sin\kl{\phi(\xv)}
        \G_\psi \kl{\chi_\Omega \cos\kl{\phi}}(\xv)
        \\
        &+
        2 n \chi_\Omega(\xv) \cos\kl{\phi(\xv)}\G_\psi \kl{\chi_\Omega\sin\kl{\phi}}(\xv) \,.
    \end{split}
	\end{equation}
If instead $\Psi(\xiv)$ is odd, i.e., if $\Psi(-\xiv) = -\Psi(\xiv)$ for all $\xiv \in \RN$, then
    \begin{equation}\label{eq_main_TPE_WFS}
    \begin{split}
		I(\xv) - n \chi_\Omega(\xv) 
        \approx  \ddt I(\xv,t)\Big\vert_{t=0} = &   2 n \chi_\Omega(\xv)
        \cos\kl{\phi(\xv)} 
        \G_\psi \kl{\chi_\Omega
        \cos\kl{\phi}}(\xv)
        \\
        & + 2 n \chi_\Omega(\xv) \sin\kl{\phi(\xv)}\G_\psi \kl{\chi_\Omega
        \sin\kl{\phi}}(\xv)\,.
    \end{split}
	\end{equation}   
\end{theorem}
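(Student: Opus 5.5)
The plan is to combine the preceding theorem, i.e.\ \eqref{main_WFS}, with the splitting of Proposition~\ref{prop_G_split}, in the same way Theorem~\ref{thm_main_wave} follows from Theorem~\ref{thm_main_general}, and then evaluate at $t=0$. The approximation $I(\xv)-n\chi_\Omega(\xv)\approx \ddt I(\xv,t)\vert_{t=0}$ is the same difference-quotient step as before, so only the equality for $\ddt I(\xv,t)\vert_{t=0}$ needs proof.

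\emph{Setup.} As in the preceding theorem, $\psi(\xiv,t)=t\Psi(\xiv)$, so $\psi'(t)=\Psi$ and $\G_\psi=\G_\Psi$. The assumption $\Psi\F(\chi_\Omega e^{-i\phi})\in\LtRN$ gives the hypotheses of Theorem~\ref{thm_helper_general}. At $t=0$ we have $A(\cdot,0)=\sqrt n\,\chi_\Omega$ and $\vphi(\cdot,0)=-\phi$. I take $Z_1=\{\sqrt n\chi_\Omega\}$ and $Z_2=\{-\phi\}$. Then condition \eqref{cond_Z1_Z2} is exactly the assumption $\chi_\Omega\cos\phi,\ \chi_\Omega\sin\phi\in D(\G_\Psi)$, using that $\cos$ is even, $\sin$ is odd, and $\G_\Psi$ is linear. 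By \eqref{obs_real}, since $\Psi$ is real-valued, evenness of $\Psi$ makes $\G_\Psi v$ real for real $v$, and oddness makes $i\G_\Psi v$ real. So Proposition~\ref{prop_G_split} applies in either case, and Theorem~\ref{thm_main_general} gives $\ddt I(\xv,t)\vert_{t=0}=2A\,\L_\psi(A,\vphi)$ at $t=0$.

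\emph{Even case.} Substitute $\L_\psi(A,\vphi)=\sin\vphi\,\G_\Psi(A\cos\vphi)-\cos\vphi\,\G_\Psi(A\sin\vphi)$ with $A=\sqrt n\chi_\Omega$ and $\vphi=-\phi$. Use $\sin(-\phi)=-\sin\phi$ and $\cos(-\phi)=\cos\phi$, and pull the signs and the factor $\sqrt n$ out of $\G_\Psi$ by linearity. With $2\sqrt n\cdot\sqrt n=2n$, this gives $-2n\chi_\Omega\sin\phi\,\G_\Psi(\chi_\Omega\cos\phi)+2n\chi_\Omega\cos\phi\,\G_\Psi(\chi_\Omega\sin\phi)$, which is \eqref{eq_main_TIE_WFS}.

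\emph{Odd case.} Use the swapped formula $\L_\psi=\cos\vphi\,i\G_\Psi(A\cos\vphi)+\sin\vphi\,i\G_\Psi(A\sin\vphi)$. Here the two sign flips from $\sin(-\phi)$ cancel, and the same computation gives \eqref{eq_main_TPE_WFS}. In this case the operator appearing in \eqref{eq_main_TPE_WFS} must be read as $i\G_\Psi$, which is the real-valued one for odd $\Psi$. I would state this reading explicitly, since otherwise the right-hand side would be purely imaginary.

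The main obstacle is purely bookkeeping: tracking the sign changes caused by $\vphi(\cdot,0)=-\phi$, checking the domain condition \eqref{cond_Z1_Z2} for exactly these $A$ and $\vphi$, and handling the factor $i$ consistently in the odd case. There is no analytic difficulty beyond what Theorem~\ref{thm_helper_general} already provides.
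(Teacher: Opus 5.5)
Your proposal is correct and is essentially the paper's own argument: the paper proves the statement in one line by combining Proposition~\ref{prop_G_split} and \eqref{obs_real} with Theorem~\ref{thm_main_general}, exactly as for Theorem~\ref{thm_main_wave}, at $t=0$ where $A(\cdot,0)=\sqrt{n}\,\chi_\Omega$ and $\vphi(\cdot,0)=-\phi$; your sign bookkeeping for $\vphi=-\phi$ checks out in both the even and the odd case. Your remark on the odd case is also right: the right-hand side of \eqref{eq_main_TPE_WFS} should carry $i\G_\Psi$ rather than $\G_\psi$, consistent with the odd-case formula in the remark following Theorem~\ref{thm_main_wave}.
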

\begin{proof}
This follows in the same way as in the proof of Theorem~\ref{thm_main_wave}. 
\end{proof}

Both \eqref{main_WFS}, and in particular \eqref{eq_main_TIE_WFS} and \eqref{eq_main_TPE_WFS} are (nonlinear) equation for the wavefront aberration $\phi(\xv)$, with their specific form depending on the concrete choice of $\Psi(\xiv)$. Hence, instead of reconstructing $\phi(\xv)$ from $I(\xv)$ using an iterative procedure such as NOPE, we can now determine it as the solution of an equation with right-hand side 
    \begin{equation}\label{helper_ddt_approx}
        I(\xv) - n \chi_\Omega(\xv) 
        \approx 
        \ddt I(\xv,t)\Big\vert_{t=0} \,. 
    \end{equation}
However, note that for the reconstruction to be accurate, the approximation \eqref{helper_ddt_approx} has to be accurate as well. Thus, in general the shape function $\Psi(\xiv)$ has to be small. E.g., we show below that this approach works well for a flat P-WFS, but fails for a steep one.

\begin{remark}
Similarly to before, it is possible to consider the above results for the case that the wavefront aberration $\phi(\xv)$ is sufficiently small such that $\sin(\vphi(\xv,t)) \approx \vphi(\xv,t)$ and $\cos(\vphi(\xv,t)) \approx 1$. This is commonly assumed in wavefront sensing in a closed-loop AO setting, when $\phi(\xv)$ is a small residual wavefront. In this case, \eqref{eq_main_TIE_WFS} simplifies to
    \begin{equation*}
    \begin{split}
		I(\xv) - n \chi_\Omega(\xv) 
        \approx   -2 n \chi_\Omega(\xv)
        \phi(\xv)
        \G_\psi \kl{\chi_\Omega }(\xv)
        +
        2 n \chi_\Omega(\xv) \G_\psi \kl{\chi_\Omega \phi}(\xv) \,,
    \end{split}
	\end{equation*}  
which we can rearrange into 
    \begin{equation}\label{approximation}
    \begin{split}
		I(\xv) - n \chi_\Omega(\xv) 
        \approx   
        2 n \kl{\rule{0pt}{2.5ex} \chi_\Omega(\xv) \G_\psi  
        -
        \G_\psi \kl{\chi_\Omega }(\xv) \mathcal{I} } 
        \kl{ \chi_\Omega(\xv) \phi(\xv) }\,.
    \end{split}
	\end{equation} 
Hence, if $\kl{\chi_\Omega(\xv) \G_\psi - \G_\psi \kl{\chi_\Omega }(\xv) \mathcal{I} }$ is continuously invertible, we obtain
    \begin{equation*}
    \begin{split}
		\chi_\Omega(\xv) \phi(\xv)   
        \approx   
        \frac{1}{2} \kl{\rule{0pt}{2.5ex} \chi_\Omega(\xv) \G_\psi  
        -
        \G_\psi \kl{\chi_\Omega }(\xv) \mathcal{I} } ^{-1}
        \kl{ \frac{I(\xv) - n \chi_\Omega(\xv)}{n} }\,.
    \end{split}
	\end{equation*}
which determines $\phi(\xv)$ on the aperture domain $\Omega$. Similarly, \eqref{eq_main_TPE_WFS} simplifies to
    \begin{equation*}
		I(\xv) - n \chi_\Omega(\xv) 
        \approx 
        2 n \chi_\Omega(\xv)
        \G_\psi \kl{\chi_\Omega}(\xv)
        + 2 n \chi_\Omega(\xv) \phi(\xv)\G_\psi \kl{\chi_\Omega
        \phi}(\xv)\,,
	\end{equation*}
which after rearranging again yields a nonlinear equation for $\phi(\xv)$ on $\Omega$, namely
    \begin{equation*}
		\chi_\Omega(\xv) \phi(\xv)\G_\psi \kl{\chi_\Omega
        \phi}(\xv)
        \approx
        \frac{1}{2}\kl{\frac{I(\xv) - n \chi_\Omega(\xv) }{n}}
        -
        \chi_\Omega(\xv)
        \G_\psi \kl{\chi_\Omega}(\xv)
        \,.
	\end{equation*}
\end{remark}

% % % % % % % % % % % %
% Subsection - G_\Psi %
% % % % % % % % % % % %
\subsection{\texorpdfstring{The Operator $\G_{\Psi}$}{The Operator G-Psi} for Different Wavefront Sensors}

As we saw above, our approach to Fourier-type WFSs based on differential intensity measurements leads to (nonlinear) equations for the wavefront aberration $\phi(\xv)$ involving the operator $\G_\psi$. Hence, we now derive some alternative characterizations of this operator for the shape functions $\Psi(\xiv)$ given in Table~\ref{table_psi}. Where this is easily possible, we directly consider the $N$-dimensional case, even though in practice $N=2$ is sufficient.
\begin{enumerate}
    \item The \textbf{Defocus WFS} is defined via the shape function
        \begin{equation}\label{def_Psi_Defocus}
            \Psi(\xiv) = \sum\limits_{k=1}^N c_k \, \xi_k^2 \,, 
		      \qquad \forall \, \xiv = (\xi_1\,,\dots\,,\xi_N) \in \RN \,.
        \end{equation}
    Using \eqref{Fourier_diff}, we can show as in Section~\ref{subsect_defoucs} (compare with \eqref{def_Pck}), that
        \begin{equation}\label{G_Psi_Defocus}
            (\G_\Psi v)(\xv) = \FI\kl{ \Psi \F(v)  }(\xv)
            = - \frac{1}{(2\pi)^2} \sum\limits_{k=1}^N c_k \dtdxkt v(\xv) \,,
        \end{equation}
    which is a well-defined element in $\LtRN$ for $v(\xv) \in H^2(\RN)$. 
    \item The \textbf{P-WFS} is defined via the shape function
        \begin{equation}\label{def_Psi_PWFS}
            \Psi(\xiv) = c \sum\limits_{k=1}^N \abs{\xi_k} \,,
            \qquad
            \forall \, \xiv = (\xi_1\,,\dots\,,\xi_N) \in \R^N \,.
        \end{equation}
    Using $\abs{\xi_k}  = \sgn(\xi_k) \xi_k$ and the expression \eqref{eq_Lapoh_Hilbert} applied to $\H_{x_k}$, we find that
        \begin{equation*}
            (\G_\Psi v)(\xv) 
            =
            c \sum\limits_{k=1}^N \FI\kl{ \abs{\xi_k}  \F(v)(\xiv)  }(\xv) 
            =
            \frac{c}{2\pi} \sum\limits_{k=1}^N  (\H_{x_k} \circ \partial_{x_k}) v(\xv)  \,,
        \end{equation*}
    which which is a well-defined element in $\LtRN$ for each $v \in H^1(\RN)$.
    \item The \textbf{$\xi_k$-Roof WFSs} are defined, for $k=1\,,\dots\,,N$, via the shape functions
        \begin{equation*}
            \Psi(\xiv) = c \abs{\xi_k} \,,
            \qquad
            \forall \, \xiv = (\xi_1\,,\dots\,,\xi_N) \in \R^N \,.
        \end{equation*}
    Analogously to the P-WFS, we thus find that
        \begin{equation*}
            (\G_\Psi v)(\xv)  
            =
            \frac{c}{2\pi}  (\H_{x_k} \circ \partial_{x_k}) v(\xv)  \,,
        \end{equation*}
    which is a well-defined element in $\LtRN$ for each $v \in H^1(\RN)$.
    \item The \textbf{Cone(s)-WFS} is defined via the shape function
        \begin{equation*}
            \Psi(\xiv) = c \abs{\xiv}^s 
            = c\kl{\sum_{k=1}^N \xi_k^2}^{s/2}\,, 
		      \qquad \forall \, \xiv = (\xi_1\,,\dots\,,\xi_N) \in \RN \,.
        \end{equation*}
    Together with the definition \eqref{def_fracLapl} of the fractional Laplacian, we thus find that
        \begin{equation*}
            (\G_\Psi v)(\xv) 
            =
            c \sum\limits_{k=1}^N \FI\kl{ \abs{\xiv}^s  \F(v)(\xiv)  }(\xv) 
            = 
            \frac{c}{(2\pi)^{s}} (-\Delta)^{s/2} v(\xv)\,,
        \end{equation*}
    which is a well-defined element in $\LtRN$ for each $v \in H^s(\RN)$. Note that for the classic \textbf{Cone-WFS} (corresponding to $s=1$), it follows with \eqref{helper_Riesz} that
        \begin{equation*}
            (\G_\Psi v)(\xv) 
            = 
            \frac{c}{(2\pi)} (-\Delta)^{1/2} v(\xv)
            =
            \frac{c}{(2\pi)} \div\kl{(\mathcal{R} v)(\xv)}\,.
        \end{equation*}
    For $s = 2$, the Cone(s)-WFS coincides with the Defocus-WFS. Furthermore, in the 1D case ($N=1$), the P-WFS and the Cone(s)-WFS with $s=1$ agree as well. 
    \item The \textbf{Riesz-WFS} is defined via the shape function
        \begin{equation*}
            \Psi(\xiv) = c \sum_{k=1}^N \frac{\abs{\xi_k}}{\abs{\xiv}} \,, 
		      \qquad \forall \, \xiv = (\xi_1\,,\dots\,,\xi_N) \in \RN \,.
        \end{equation*}
    Together with the definition \eqref{def_Riesz} of the Riesz transform, we thus find that
        \begin{equation*}
            (\G_\Psi v)(\xv) 
            =
            -c \sum\limits_{k=1}^N \FI\kl{ -i\frac{\xi_k}{\abs{\xiv}}(-i \sgn(\xi_k)) \F(v)(\xiv)  }(\xv) 
            = 
            c \sum\limits_{k=1}^N (\Hk \circ \Rk)v(\xv)\,,
        \end{equation*}
    which which is a well-defined element in $\LtRN$ for each $v \in \LtRN$. Note that similarly to the $\xi_k$-Roof WFS one could also define component-wise Riesz WFSs.
    \item The \textbf{Zernike-WFS} is defined via the shape function
        \begin{equation*}
            \Psi(\xiv) = c \,\chi_{B_\rho(0)}(\xiv) \,, 
		      \qquad \forall \, \xiv = (\xi_1\,,\dots\,,\xi_N) \in \RN \,,
        \end{equation*}
    where $B_\rho(0)$ denotes the unit-ball (in $\RN$) of radius $\rho$. Hence, with \eqref{Fourier_convolution} we get
        \begin{equation*}
            (\G_\Psi v)(\xv) 
            =
            c \, \FI\kl{ \chi_{B_\rho(0)} \F(v)(\xiv)  }(\xv) 
            = 
            c \kl{ \FI\kl{ \chi_{B_\rho(0)}} \ast v}(\xv)\,,
        \end{equation*}
    which together with the well-known identity
        \begin{equation*}
            \FI\kl{ \chi_{B_\rho(0)}}(\xv)
            =
            \rho^\frac{N}{2} \abs{\xv}^{-\frac{N}{2}} J_{N/2}(2\pi \rho \abs{\xv}) \,,
        \end{equation*}
    where $J_{N/2}(x)$ denotes the Bessel function of the 1st kind of order $N/2$, yields
        \begin{equation*}
            (\G_\Psi v)(\xv) 
            =
            c \rho^\frac{N}{2} \kl{ \kl{ \abs{\cdot}^{-\frac{N}{2}} J_{N/2}(2\pi \rho \abs{\cdot})} \ast v}(\xv)\,,
        \end{equation*}
    \vspace{10pt}
    Note that $(\G_\Psi v)(\xv)$ is a well-defined element in $\LtRN$ for each $v \in \LtRN$.
    \item The \textbf{Astigmatism-WFS} is defined via the shape function (in 2D, i.e., $N=2$)
        \begin{equation}\label{def_Psi_Astig}
            \Psi(\xiv) = c \, \xi_1 \, \xi_2 \,, 
		      \qquad \forall \, \xiv = (\xi_1\,,\xi_2) \in \R^2 \,.
        \end{equation}
    Using \eqref{Fourier_diff}, we can show as in the case of the Defocus-WFS that
        \begin{equation*}
            (\G_\Psi v)(\xv) = c \, \FI\kl{ \xi_1 \, \xi_2 \F(v)(\xiv)  }(\xv)
            =  \frac{c}{(2\pi)^2} \partial_{x_1} \partial_{x_2} v(\xv) 
            =  \frac{c}{(2\pi)^2} \partial_{x_2} \partial_{x_1} v(\xv) \,,
        \end{equation*}
    which is a well-defined element in $\LtRN$ for $v(\xv) \in H^2(\RN)$.
    \item The \textbf{4QPM-WFS} is defined via the shape function (in 2D, i.e., $N=2$)
        \begin{equation}\label{def_Psi_iQuad}
            \Psi(\xiv) 
            =
            \begin{cases}
                c \,, & \xi_1 \, \xi_2 \leq 0 \,,
                \\
                0 \,, & \text{else} \,,
            \end{cases}
		      \qquad \qquad 
              \forall \, \xiv = (\xi_1\,,\xi_2) \in \R^2 \,,
        \end{equation}
    which can be rewritten as $\Psi(\xiv) = c\, \sgn(\xi_1)\,\sgn(\xi_2)$. Together with \eqref{eq_Hk_sgn}, we obtain 
        \begin{equation*}
            (\G_\Psi v)(\xv) = c \, \FI\kl{ (-i\sgn(\xi_1))\,(-i\sgn(\xi_2)) \F(v)(\xiv)  }(\xv)
            = \H_{x_1} \H_{x_2} v(\xv) \,.
        \end{equation*}
    Note that $(\G_\Psi v)(\xv)$ is a well-defined element in $\LtRN$ for each $v \in \LtRN$. Again, similarly to the $\xi_k$-Roof WFS, component-wise 4QPM-WFSs are possible.
\end{enumerate}

At this point, we have to discuss the indicator function $\chi_\Omega(\xv)$ in \eqref{def_I_FtWFS}. On the one hand, it ensures that $\chi_\Omega(\xv) e^{-i \phi(\xv)} \in \LtRN$, and thus that $I(\xv) \in L^1(\RN)$ is well-defined. On the other hand, if $D(\G_\Psi) \subset H^s(\Omega)$ for $s \geq 1/2$, as is the case for the Defocus, Pyramid, Roof, Cone, and Astigmatism WFS, then $\G_\Psi(\chi_\Omega )$ is not well-defined. In practice, this does not pose any substantial difficulties, since the phase $\phi(\xv)$ is (and can only be) reconstructed within the aperture domain $\Omega$ anyways, where $\chi_\Omega(\xv)$ is constant. However, it does pose theoretical obstacles, which have already been encountered in previous analyses of Fourier-type WFSs \cite{HuNeuSha_2023}. There, it was proposed to replace $\chi_\Omega(\xv)$ in \eqref{def_I_FtWFS} by a compactly supported and smooth approximation $\chi^\eps_\Omega(\xv)$ which satisfies $\chi^\eps_\Omega(\xv) = 1$ for all $\xv \in \Omega$. While this is certainly possible here as well, it also introduces a number of technicalities obstructing the main purpose of the subsequent section. Hence, and since we have already used the approximation \eqref{helper_ddt_approx}, for the following application to specific Fourier-type WFSs we simply assume that $\Omega = \RN$, and interpret (Fourier) transforms such as $\F\kl{\chi_\Omega e^{-i \phi}}(\xiv) = \F\kl{ e^{-i \phi}}(\xiv) $ in the distributional sense. Where appropriate, we then also state the corresponding equations which would have been obtained using the smooth approximation $\chi^\eps_\Omega(\xv)$.

% % % % % % % % % % % % % % % % % % % % % % %
% Subsection - Defocus and Astigmatism WFSs % 
% % % % % % % % % % % % % % % % % % % % % % %
\subsection{Applications, Simplifications, Reconstruction Formulas}

In this section, we derive explicit PDEs for the aberration $\phi(\xv)$ for the Defocus and Astigmatism WFS, and provide an explicit reconstruction algorithm for all Fourier-type WFSs with even shape functions $\Psi(\xiv)$ based on the approximation of a small $\phi(\xv)$. 

First, consider the Defocus WFS defined via \eqref{def_Psi_Defocus}, for which we have shown in \eqref{G_Psi_Defocus} that $\G_\Psi$ is a second order differential operator as in \eqref{def_Pck}. Hence, analogously to \eqref{eq_TIE_generalized}, and recalling our assumption $\Omega = \RN$, we find that in this case \eqref{main_WFS} takes the form
    \begin{equation*}
		\kl{\frac{I(\xv) - n}{n}} \approx - \frac{1}{(2\pi)^2} \nabla_{\xv} \cdot \kl{ \Cb  \nabla_{\xv} \phi(\xv)} \,.
	\end{equation*}
where $\Cb := \operatorname{diag}(2c_1\,,\dots\,,2c_N) \in  \R^{N\times N}$. If $c_k = c$ for all $k$, this further simplifies to
    \begin{equation}\label{eq_defocus}
        \Delta \phi(\xv) \approx -\frac{(2\pi)^2}{2c } \kl{\frac{I(\xv) - n}{n}} \,,
	\end{equation}
which can easily be solved (numerically) for the aberration $\phi(\xv)$. Note the conceptual and mathematical similarity of the D-WFS and \eqref{eq_defocus} to the curvature WFS \cite{Roddier_1988}.   

\begin{remark}
Note that the shape function $\Psi(\xiv)$ of the P-WFS given in \eqref{def_Psi_PWFS} can, for small values of $c$, be reasonably well approximated by the shape function of the D-WFS; see also the similar intensities in Figure~\ref{fig_otfs}. Hence, for measurements obtained by a flat P-WFS, \eqref{eq_defocus} can be used to efficiently reconstruct the wavefront aberration $\phi(\xv)$.
\end{remark}

In the case of the Astigmatism WFS defined via \eqref{def_Psi_Astig}, we similarly obtain the PDE
    \begin{equation}\label{eq_astig}
         \partial_{x_2} \partial_{x_1} \phi(\xv) = -\frac{(2\pi)^2}{2c} \kl{\frac{I(\xv) - n}{n}} \,,
    \end{equation}
a hyperbolic equation which can again be easily solved for the aberration $\phi(\xv)$.

\begin{remark}
Note that when using the smooth approximation $\chi^\eps_\Omega(\xv)$ of the indicator function $\chi_\Omega(\xv)$, a close inspection of the derivation of \eqref{eq_defocus} and \eqref{eq_astig} shows that they remain valid in the interior of $\Omega$, leaving them suitable for wavefront reconstruction.
\end{remark}

For general Fourier-type WFSs characterized by even shape functions $\Psi(\xiv)$, consider again the approximation \eqref{approximation} for $\sin(\vphi(\xv,t)) \approx \vphi(\xv,t)$ and $\cos(\vphi(\xv,t)) \approx 1$, i.e.,  
    \begin{equation*}
		I(\xv) - n \chi_\Omega(\xv)  
        \approx   
        2 n \kl{\rule{0pt}{2.5ex} \chi_\Omega(\xv) \G_\psi  
        -
        \G_\psi \kl{\chi_\Omega }(\xv) \mathcal{I} } 
        \kl{ \chi_\Omega(\xv) \phi(\xv) }\,.
	\end{equation*}
Under our current assumption that $\Omega = \RN$, this further simplifies to
    \begin{equation*}
		\kl{I(\xv) - n}
        \approx   
        2 n \kl{\rule{0pt}{2.5ex}  \G_\psi  
        -
        \G_\psi \kl{1}(\xv) \mathcal{I} } 
        \phi(\xv) \,.
	\end{equation*}
Now note that if $\Psi(\xiv)$ is continuous in zero, then $\G_\Psi(1) = \Psi(0)$, and thus
    \begin{equation}\label{FWFS_MAIN_forward}
		\kl{I(\xv) - n}
        \approx   
        2 n \kl{\rule{0pt}{2.5ex}  \G_\psi  
        -
        \Psi(0) \mathcal{I} } 
        \phi(\xv) \,.
	\end{equation}
Hence, if $\kl{\G_\psi-\Psi(0) \mathcal{I} } $ is invertible, then we obtain the reconstruction formula
    \begin{equation}\label{FWFS_MAIN}
		\phi(\xv)
        \approx 
        \frac{1}{2}
        \kl{\rule{0pt}{2.5ex}  \G_\psi  
        -
        \Psi(0) \mathcal{I} }^{-1}\kl{\frac{I(\xv) - n}{n}} 
        =
        \frac{1}{2}
        \FI\kl{ \frac{1}{\Psi(\xiv)- \Psi(0)}\F \kl{\frac{I(\cdot) - n}{n}} }(\xiv) 
         \,,
	\end{equation}
which can be efficiently implemented via the fast (discrete) Fourier transform. Note that if $\Psi(\xv)$ is not continuous in zero, then $\G_\Psi(1)$ needs to be computed appropriately. For example, $\G_\Psi(1) = c/2$ for the 4QPM sensor \eqref{def_Psi_iQuad}, which is discontinuous in $0$.

\begin{remark}
Note that for most of our considered WFSs, there holds $\Psi(\xiv) = 0$. This is in particular true for the Defocus and Astigmatism WFSs, for which  \eqref{FWFS_MAIN_forward} then coincides with \eqref{eq_defocus} and \eqref{eq_astig}, respectively. Finally, note that a small regularization parameter may be added in \eqref{FWFS_MAIN} to ensure that the denominator stays bounded away from zero.
\end{remark}

% % % % % % % % % % % % % % % % % % %
% Section - Numerical Exaperiments  %
% % % % % % % % % % % % % % % % % % %
\section{Numerical Experiments}\label{sect_numerics}

In this section, we present a number of numerical experiments showcasing the practical usefulness of our derived results. In particular, we first consider phase reconstructions in the case of quadratic modulations $\psi(\xiv,t)$ via the generalized TIE \eqref{eq_TIE_generalized}, and then investigate the reconstruction quality of \eqref{FWFS_MAIN} for different Fourier-type WFSs.

In both cases, we consider a piecewise constant discretization of the involved functions on an equidistant $N \times N$ (pixel) grid centered around the origin, and, with a small abuse of notation, identify the discretized functions with their continuous counterparts. With this choice, the continuous Fourier transform can be suitably discretized via the discrete Fourier transform (DFT), which in turn can be efficiently implemented via the (inverse) fast Fourier transform (\texttt{(i)fft2}). All computations presented below were carried out in Matlab R2025a on a standard desktop computer.

% % % % % % % % % % % % % % % % %
% Subsection - Results General  %
% % % % % % % % % % % % % % % % %
\subsection{Numerical Results: Quadratic Modulation}

\begin{figure}[ht!]
    \centering
    \includegraphics[width=\linewidth]{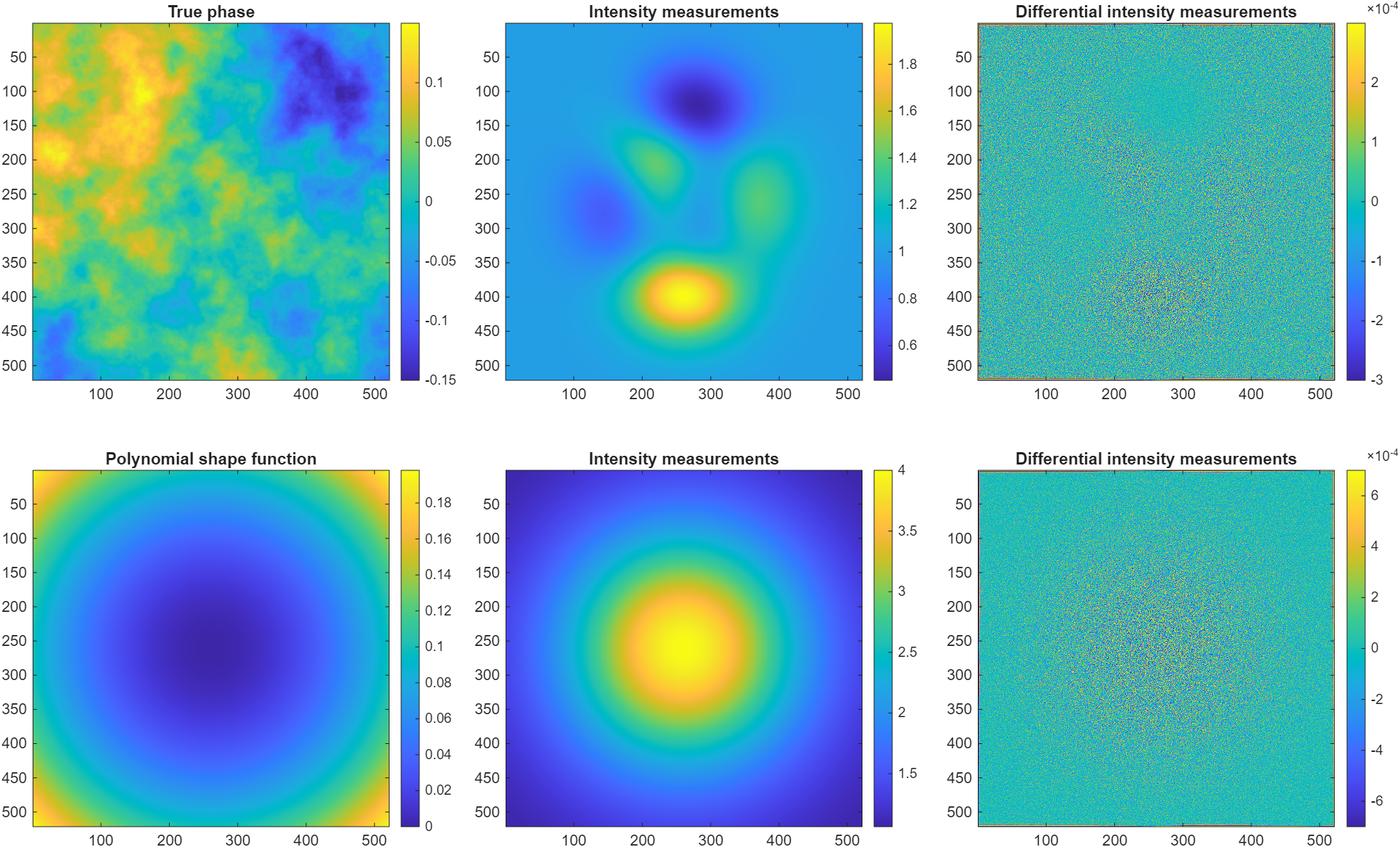}
    \caption{Datasets for quadratic modulation experiment: ground truth phase $\vphi(\xv,0)$ (top left), quadratic modulation function $\psi(\xiv,1)$ (bottom left), example intensities $I
    (x,1)$ (middle) and approximate differential intensity measurements $\Delta I(\xv)$ (right) for two different test settings, i.e., $A(\xv,0)$ \texttt{peaks} or Gaussian (top and bottom, resp.).}
    \label{fig:dataset_poly}
\end{figure}

\noindent
In our first experiment, we consider the setting of Section~\ref{subsect_defoucs}, i.e., the reconstruction of the phase $\vphi(\xv,0)$ in \eqref{polar_form_t} from measurements of $I(\xv,t)$ in the case of a quadratic modulation $\psi(\xiv,t) = c t (2\pi^2) \abs{\xiv}^2$. In this case, $\vphi(\xv,0)$ satisfies the TIE \eqref{eq_TIE_classic}, i.e.,
    \begin{equation*}
		\nabla_{\xv} \cdot \kl{ I(\xv,0) \, \nabla_{\xv} \vphi(\xv,0)} =
        \frac{1}{c}\ddt I(\xv,t) \Big\vert_{t=0}
        \approx 
        \frac{1}{c} \kl{I(\xv,1)-I(\xv,0)}\,.
	\end{equation*}
In order for the last approximation to be sufficiently accurate, we set $c=10^{-2}$ in the definition of $\psi(\xiv,t)$. In order to solve the above equation for $\vphi(\xv,0)$, we adapt the Fourier multiplier approach from \eqref{TIE_FM}, which in our discretized setting now reads
    \begin{equation*}
        \vphi(\xiv,0)
        \approx 
        \texttt{ifft2}\kl{ g_\alpha\kl{2\pi i\abs{\xiv}}\texttt{fft2} \kl{\frac{1}{2c I(\xv,0)}
        \texttt{ifft2}\kl{ g_\alpha\kl{2\pi i\abs{\xiv}} \texttt{fft2} \kl{\Delta I}}}(\xiv)}
\end{equation*}
where
    \begin{equation}\label{gal}
        \Delta I(\xv) := I(\xv,1) - I(\xv,0)\,,
        \qquad
        \text{and} 
        \qquad
        g_\alpha(\lambda) := \frac{\lambda}{\lambda^2+\alpha} \,,
    \end{equation}
is the Tikhonov regularization filter \cite{Engl_Hanke_Neubauer_1996} with regularization parameter $\alpha = 10^{-14}$, which is used to avoid potential zero divisions in \eqref{TIE_FM}. For the amplitude $A(\xv,0)$, we use two different choices, a Gaussian bump function and a scaled version of Matlab's \texttt{peaks}, both shifted away from zero. The ground truth phase $\vphi(\xv,0)$, quadratic modulation $\psi(\xiv,1)$, the corresponding intensities $I(\xv,1)$, computed via \eqref{polar_form_t}, and the resulting approximate differential intensity measurements $\Delta I(\xv)$ are depicted in Figure~\ref{fig:dataset_poly}. 

\begin{figure}[ht!]
    \centering
    \includegraphics[width=\linewidth]{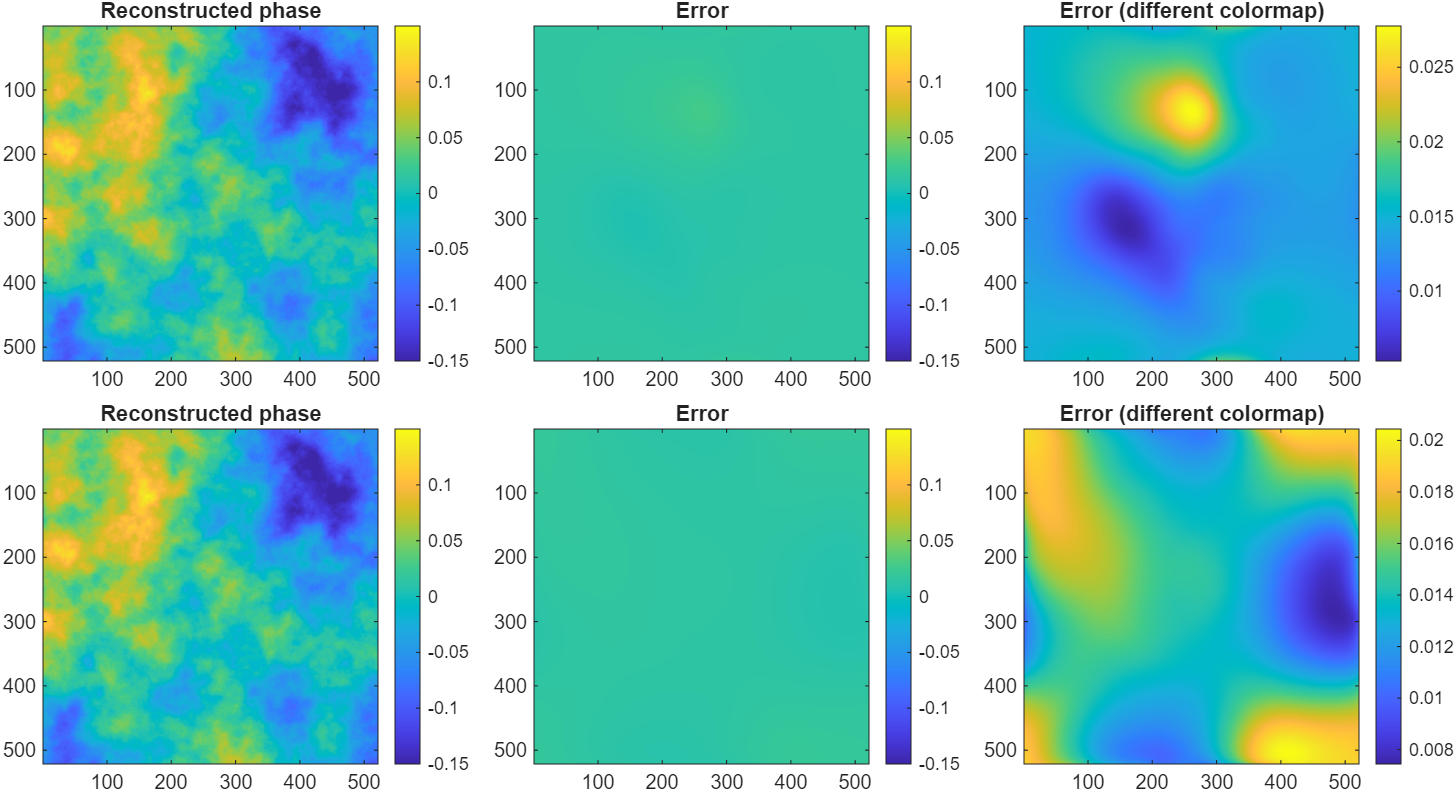}
    \caption{Phase reconstructions in the quadratic modulation experiment: reconstructed phase $\vphi(\xv,0)$ (left) for \texttt{peaks} (top) and Gaussian amplitude $A(\xv,0)$ (bottom). Middle row: reconstruction error displayed in colormap scale of the reconstruction. Right row: reconstruction error in tight colormap scaled to the actual error range.}
    \label{fig:rec_poly}
\end{figure}

Figure~\ref{fig:rec_poly} presents the reconstructions obtained with the approach discussed above, as well as the reconstruction error in two different colormaps for enhanced visibility. In both cases, we observe that the reconstructions are very close to the ground truth phase, which is particularly evident in the difference plots on the right of the figure.

% % % % % % % % % % % % % % %
% Subsection - Results WFSs %
% % % % % % % % % % % % % % %
\subsection{Numerical Results: Fourier-type WFSs}

\begin{figure}[ht!]
    \centering
    \includegraphics[width=\linewidth]{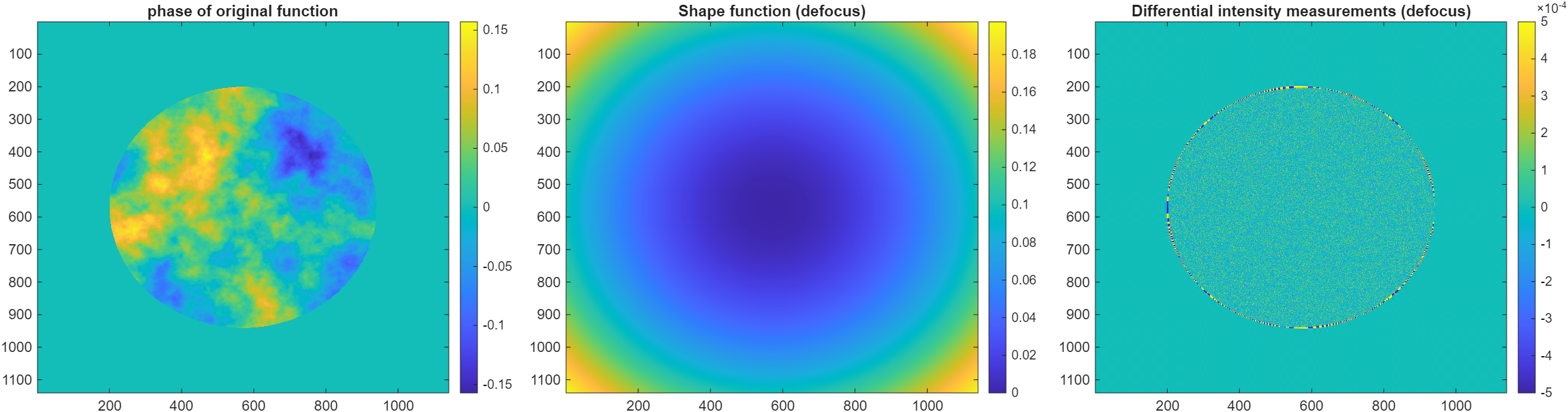}
    \caption{Dataset for Fourier-type WFS experiments: ground truth wavefront phase $\phi(\xv)$ (left), defocus shape function $\Psi(\xv)$ (middle) and corresponding approximate differential intensity measurements $(I(\xv) - 1)$ (right).}
    \label{fig:dataset_WFR}
\end{figure}

\noindent
In our second experiment, we investigate the reconstruction quality of the reconstruction formula \eqref{FWFS_MAIN} for different Fourier-type WFSs, which in our discrete setting reads
    \begin{equation}\label{rec_WFS_finite}
        \phi(\xv)
        =
        \frac{1}{2}
        \texttt{ifft2}\kl{ g_\alpha\kl{\Psi(\xiv)- \Psi(0)}\texttt{fft2} \kl{\frac{I - n}{n}}(\xiv) }
         \,,
    \end{equation} 
where $g_\alpha(\lambda)$ is the Tikhonov regularization filter of \eqref{gal}, again used with $\alpha = 10^{-14}$. Note that as in the continuous setting, if $\Psi$ is not continuous in zero, $\Psi(0)$ has to be replaced by $\G_\Psi(1)$. Furthermore, note that by using this formula, we implicitly assume that $\Omega = \R^2$, i.e., that the aperture is the whole space. Since this is unrealistic in practice, we choose the ground truth wavefront $\phi(\xv)$ such that it is zero outside of a circular domain mimicking a telescope aperture, and then restrict our reconstruction to this domain. Without loss of generality, we also set the spatial average incoming flux $n=1$, which only amounts to a simple scaling. Figure~\ref{fig:dataset_WFR} depicts the ground truth wavefront $\phi(\xv)$, the shape function $\Psi(\xv)$ of the Defocus WFS, as well as the corresponding differential intensity measurements $(I(\xv)-1)$. Note that in all the results presented below, we have used the choice $c=10^{-2}$ in the definition of the shape functions $\Psi(\xiv)$; cf.~Table~\ref{table_psi}. Hence, the intensities $I(\xv)$ generally look different than those depicted in Figure~\ref{fig_otfs}, where much larger values of $c$ were chosen for illustration.

\begin{figure}[ht!]
    \centering
    \includegraphics[width=\linewidth]{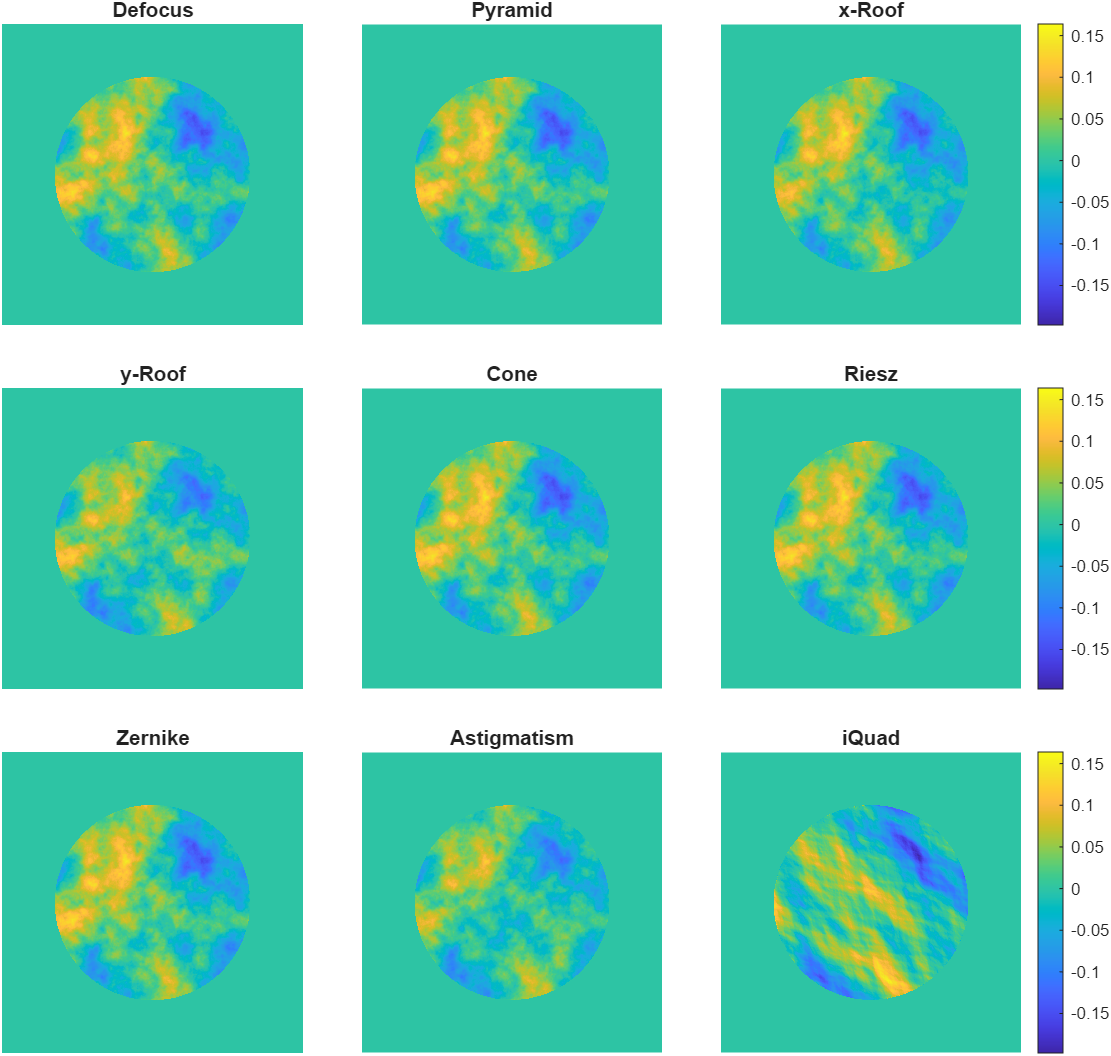}
    \caption{Fourier-type WFS experiments: wavefront reconstructions obtained via the reconstruction formula \eqref{rec_WFS_finite} for different shape-functions $\Psi(\xiv)$ as given in  Table~\ref{table_psi}.}
    \label{fig:rec_WFR}
\end{figure}

\begin{figure}[ht!]
    \centering
    \includegraphics[width=\linewidth]{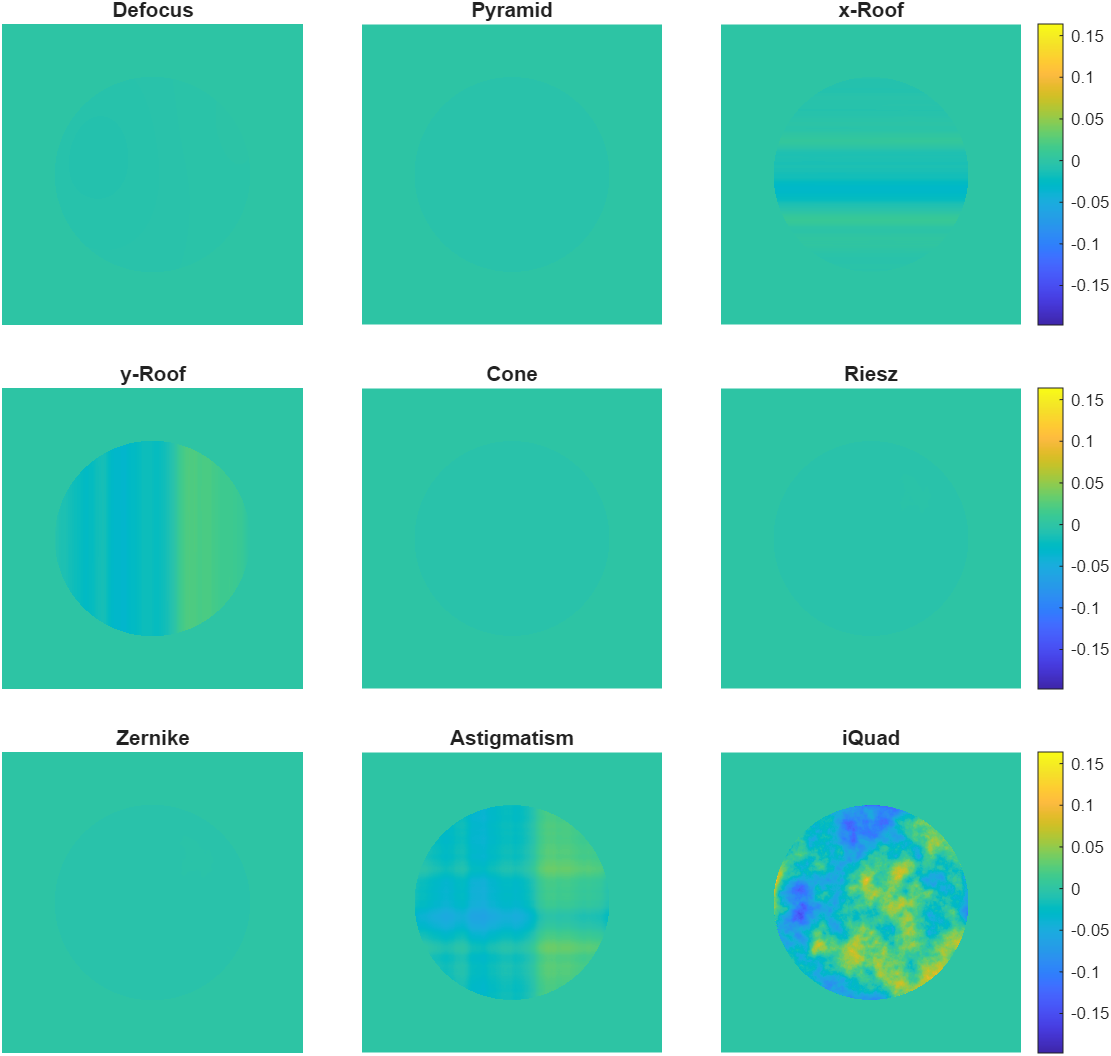}
    \caption{Fourier-type WFS experiments: reconstruction errors obtained via the reconstruction formula \eqref{rec_WFS_finite} for different shape-functions $\Psi(\xiv)$ as given in  Table~\ref{table_psi}.}
    \label{fig:error_WFR}
\end{figure}

Figure~\ref{fig:rec_WFR} depicts the wavefronts $\phi(\xv)$ reconstructed via \eqref{rec_WFS_finite} for the different shape functions $\Psi(\xiv)$ given in Table~\ref{table_psi}. With the exception of the 4QPM-WFS, the reconstructions for all WFSs are visually very close to the ground truth wavefront $\phi(\xv)$. This is mostly confirmed via the reconstruction error plots depicted in Figure~\ref{fig:error_WFR}, which reveal only minor errors for the x- and y-Roof, as well as the Astigmatism WFSs. Overall, the reconstruction quality appears to be excellent, which is promising for future tests of our reconstruction formula \eqref{rec_WFS_finite} in a full AO simulation environment.

% % % % % % % % % % % % % % % % % % %
% Section - Conclusion and Outlook  %
% % % % % % % % % % % % % % % % % % %
\section{Conclusion}\label{sect_conclusion}

In this paper, we derived a series of partial differential and integro-differential equations for the unknown phase $\vphi(\xv,t)$ in the context of Fourier phase retrieval from differential intensity measurements. We showed how these equations can be used as the basis for the design and implementation of efficient phase reconstruction algorithms, and derived an explicit wavefront reconstruction formula for the phase retrieval problem underlying wavefront sensing via Fourier-type WFSs. Numerical experiments then illustrated the usefulness of our proposed approaches for both phase retrieval and wavefront sensing.

% % % % % % % % % % %
% Section - Support %
% % % % % % % % % % %
\section{Acknowledgments \& Support}

This research was funded in part by the Austrian Science Fund (FWF) SFB 10.55776/F68 ``Tomography Across the Scales'', project F6805-N36 (Tomography in Astronomy) and project F6807-N36 (Tomography with Uncertainties). For open access purposes, the authors have applied a CC BY public copyright license to any author-accepted manuscript version arising from this submission. The first author thanks Christina Strohmenger (University of Vienna) and Stefan Kindermann (JKU Linz) for fruitful discussions.

% % % % % % % % %
% Bibliography  %
%% % % % % % % % %
\bibliographystyle{plain}
{\footnotesize
\bibliography{mybib,bib_fourier_clear}
}

\end{document}

%% file: header.tex
\usepackage{amsmath}
\usepackage{amsthm}
\usepackage{amssymb}
\usepackage{cite}
\usepackage{url}
\usepackage{footmisc}
\usepackage{graphicx}
\usepackage{epstopdf}
\usepackage{chngcntr}
\usepackage{enumitem}
\usepackage{hhline}
\usepackage{array}
\usepackage{hyperref}
\usepackage{color}
\usepackage{multirow}
\usepackage{tabularx}
\usepackage{tikz}

\hypersetup{colorlinks=false}

\makeatletter
\let\@fnsymbol\@arabic
\makeatother

\tikzset{every picture/.style={line width=0.75pt}} 
\usetikzlibrary{patterns}

\tikzset{
    pattern size/.store in=\mcSize, 
    pattern size = 5pt,
    pattern thickness/.store in=\mcThickness, 
    pattern thickness = 0.3pt,
    pattern radius/.store in=\mcRadius, 
    pattern radius = 1pt
}
\makeatletter
\pgfutil@ifundefined{pgf@pattern@name@_7gn8i7dj0}{
    \pgfdeclarepatternformonly[\mcThickness,\mcSize]{_7gn8i7dj0}
    {\pgfqpoint{0pt}{0pt}}
    {\pgfpoint{\mcSize+\mcThickness}{\mcSize+\mcThickness}}
    {\pgfpoint{\mcSize}{\mcSize}}
    {
    \pgfsetcolor{\tikz@pattern@color}
    \pgfsetlinewidth{\mcThickness}
    \pgfpathmoveto{\pgfqpoint{0pt}{0pt}}
    \pgfpathlineto{\pgfpoint{\mcSize+\mcThickness}{\mcSize+\mcThickness}}
    \pgfusepath{stroke}
    }
}
\makeatother

\if@twoside
\else 
\fi

\numberwithin{equation}{section}
\counterwithin{figure}{section}
\counterwithin{table}{section}

\theoremstyle{plain}% default
\newtheorem{theorem}{Theorem}[section]
\newtheorem{lemma}[theorem]{Lemma}
\newtheorem{proposition}[theorem]{Proposition}

\newtheorem{corollary}[theorem]{Corollary}

\newtheorem{assumption}{Assumption}[section]

\theoremstyle{definition}

\newtheorem{example}{Example}[section]

\theoremstyle{remark}
\newtheorem*{remark}{Remark}

\newcommand{\norm}[1]{\left\|#1\right\|}
\newcommand{\abs}[1]{\left\vert#1\right\vert}

\newcommand{\kl}[1]{\left(#1\right)}
\newcommand{\Kl}[1]{\left\{#1\right\}}

\newcommand{\R}{\mathbb{R}} 
\newcommand{\C}{\mathbb{C}}
\newcommand{\N}{\mathbb{N}}

\def\div{\mfunc{div}} % redef!!!

\newcommand{\sgn}{\mathrm{sgn}}

\newcommand{\eps}{\varepsilon}

\newcommand{\ddt}{\frac{\partial}{\partial t}}
\newcommand{\ddx}{\frac{\partial}{\partial x}}
\newcommand{\ddxk}{\frac{\partial}{\partial x_k}}
\newcommand{\dtdxkt}{\frac{\partial^2}{\partial x_k^2}}

\newcommand{\F}{\mathcal{F}}
\newcommand{\FI}{\mathcal{F}^{-1}}
\newcommand{\G}{\mathcal{G}}

\renewcommand{\H}{\mathcal{H}}
\newcommand{\HI}{\mathcal{H}^{-1}}
\newcommand{\Hk}{\mathcal{H}_{x_k}}
\newcommand{\Rk}{\mathcal{R}_k}
\newcommand{\xv}{\boldsymbol{x}}
\newcommand{\xiv}{\boldsymbol{\xi}}
\newcommand{\yv}{\boldsymbol{y}}

\newcommand{\Lcat}{\mathcal{L}_{c_\alpha(t)}}
\newcommand{\Scat}{\mathcal{S}_{c_\alpha(t)}}
\renewcommand{\L}{\mathcal{L}}
\renewcommand{\S}{\mathcal{S}}

\renewcommand{\div}{\operatorname{div}}

\newcommand{\vphi}{\varphi}

\newcommand{\LtR}{{L_2(\R)}}

\newcommand{\Rt}{{\R^2}}
\newcommand{\ca}{c_{\alpha}}

\newcommand{\Pcat}{\mathcal{P}_{c_\alpha(t)}}

\newcommand{\Pckt}{\mathcal{P}_{c_k(t)}}
\newcommand{\LtRN}{{L_2(\R^N)}}
\newcommand{\RN}{{\R^N}}
\newcommand{\Cb}{\boldsymbol{C}}

\newcommand{\OTF}{\operatorname{OTF}}